 \theoremstyle{plain}
\newtheorem{theorem}{Theorem}
\newtheorem{proposition}{Proposition}
\newtheorem{lemma}[proposition]{Lemma}
\theoremstyle{definition}
\newtheorem*{remark}{Remark}
\newtheorem{openproblem}{Open Problem}
\newcommand{\cadlag}{c\`adl\`ag}
\newcommand{\re}{\ensuremath{\mathbb{R}}}
\newcommand{\paren}[1]{\ensuremath{\left( #1\right) }}
\newcommand{\F}{\ensuremath{\mathscr{F}}}
\newcommand{\p}{\mathbb{P}}
\newenvironment{lesn}{\begin{linenomath}\begin{equation*}}{\end{equation*}\end{linenomath}}
\newcommand{\imf}[2]{\ensuremath{#1\!\paren{#2}}}
\newcommand{\se}{\ensuremath{\mathbb{E}}}
\newcommand{\cond}[2]{\left.\vphantom{#2}#1\ \right| #2}
\DeclareMathOperator{\cb}{CB}
\DeclareMathOperator{\cbi}{CBI}
\DeclareMathOperator{\gwi}{GWI}
\DeclareMathOperator{\gw}{GW}
\newcommand{\bra}[1]{\ensuremath{\left[ #1\right] }}
\newcommand{\z}{\ensuremath{\mathbb{Z}}}
\newcommand{\na}{\ensuremath{\mathbb{N}}}
\newcommand{\set}[1]{\ensuremath{\left\{ #1\right\} }}
\DeclareMathOperator{\id}{Id} %
\newcommand{\indi}[1]{\si_{#1}}
\newcommand{\si}{{\ensuremath{\bf{1}}}}
\newcommand{\esp}[1]{\ensuremath{\se\! \left( #1 \right)}}
\newcommand{\clo}[1]{\ensuremath{\overline{#1}}}
\newcommand{\abs}[1]{\hspace{.25mm}\left|#1\right|\hspace{.25mm}}
\newcommand{\sag}[1]{\sigma\!\paren{#1}}
\newcommand{\ra}{\ensuremath{\mathbb{Q}}}
\newcommand{\proba}[1]{\ensuremath{\sip\! \left( #1 \right)}}
\newcommand{\sip}{\mathbb{P}}
\newcommand{\mc}[1]{\ensuremath{\mathscr{#1}}}
\newcommand{\floor}[1]{\ensuremath{\lfloor #1\rfloor}}
\newcommand{\eps}{\ensuremath{ \varepsilon}}
\newcommand{\fun}[3]{\ensuremath{#1:#2\to #3}}
\newcommand{\ceil}[1]{\ensuremath{\lceil #1 \rceil}}
\newcommand{\varc}[2]{\ensuremath{\imf{\text{Var}}{\cond{#1}{#2}}}}
\newcommand{\espc}[2]{\ensuremath{\imf{\se}{\cond{#1}{#2}}}}
\newcommand{\var}[1]{\ensuremath{\text{Var}\! \left( #1 \right)}}
\title{Invariance principles for local times in regenerative settings}
\author{Aleksandar Mijatovi{\'c}}
\address{ Department of Statistics, University of Warwick 
\\ \& The Alan Turing Institute 
}
\email{a.mijatovic@warwick.ac.uk}
\author{Ger\'onimo Uribe Bravo}
\address{Instituto de Matem\'aticas\\ Universidad Nacional Aut\'onoma de M\'exico
}
\email{geronimo@matem.unam.mx}
\subjclass[2010]{
60F17
,60J55
}
\thanks{AM supported by EPSRC grant EP/P003818/1 and the Turing Fellowship funded by the Programme on Data-Centric Engineering of Lloyd's Register Foundation; 
GUB's research supported by 
CoNaCyT
grant FC-2016-1946 and UNAM-DGAPA-PAPIIT grant no. IN115217. 
} 
\begin{document}
\begin{abstract}
Consider a 
stochastic 
process $\mathfrak{X}$, regenerative at a state $x$ which is instantaneous and regular. 
Let $L$ be a regenerative local time for $\mathfrak{X}$ at $x$. 
Suppose furthermore that $\mathfrak{X}$ can be approximated by discrete time regenerative processes $\mathfrak{X}^n$ 
for which $x$ is accesible. 
We give conditions on $\mathfrak{X}$ and $\mathfrak{X}^n$ so that the naturally defined local time of $\mathfrak{X}^n$ converges weakly to $L$. 
This limit theorem generalizes previous invariance principles that have appeared in the literature. 
\end{abstract}
\maketitle

\section{Introduction}
Counting the number of times a certain discrete-time stochastic process visits a given state 
is a simple operation on it. 
However, in continuous state-space, this quantity is typically zero, one or infinity, 
and so extending the notion is not a trivial task. 
The first example of such an extension is Paul L\'evy's construction of Brownian local time 
in Section 7.5 of \cite{MR0000919} or in  Th\'eor\`eme 47 of \cite{MR0190953}. 
This local time is a random non-decreasing continuous singular function, 
increasing only on the zero set of Brownian motion (whose Hausdorff dimension is $1/2$). 
Limit theorems then relating the discrete counting notion with the continuous one are not common, 
since local time is not a continuous functional of the sample path. 
Among the results available one has limit theorems
for the discrete local time at all states of random walks with finite variance in 
\cite{MR665738} and \cite{MR636771} 
or in the domain of attraction of a stable law in \cite{MR749918} 
(the case of continuous-time random walks is found in \cite{2019arXiv190700963K}).   
In higher dimensions, we mention the invariance principle of \cite{MR1169015} for local times of random walks on (neighborhoods of) curves in the Brownian domain of attraction. 
One also has limit theorems for the local time (stopped at two different stopping times) at all states 
of a sequence of spectrally positive compound Poisson process with negative drift 
approximating a spectrally positive L\'evy process  in  \cite{MR3320960}, 
as well as for the local time at zero of L\'evy processes reflected at their minimum 
in \cite{MR1954248} and \cite{MR2663630}. 
One of the motivations of the latter works was to study convergence of the height process 
(a local time functional of a L\'evy process path) 
useful to study limit theorems for Galton-Watson trees. 
One of the motivations of this work is to find a common framework 
where one can obtain results in  the above directions. 
Very related are limit theorems for occupation times of Markov processes 
(which include some for local times) 
in \cite{MR0084222} and \cite{MR1949295}; 
these have, in the random walk case, strong versions in \cite{MR3060348} and \cite{MR3813993} 
(for random walk bridges). 
However, these results have focused on (self-similar) scaling limits of the occupation times of only one process 
and not on weak limits of local times.
Regarding our assumptions, 
we have in mind the setting where we have a sequence of discrete-time Markov processes 
converging weakly to another one, and our aim is to deduce that local times also converge. 
The assumptions are similar to ones featuring in results about weak convergence 
of regenerative processes of \cite{MR3189081}.

In order to highlight the fragile nature of the convergence of local times, consider the following simple example. 
Let $U^1=\abs{S}$, 
where $S$ is a simple symmetric random walk. 
By Donsker's invariance principle,  
$U^n=(U^1_{\floor{nt}}/\sqrt{n})$ converges weakly to $\abs{B}$ where $B$ is a Brownian motion. 
Borodin and Perkins proved (in the above cited works) that the (natural) local time 
\begin{lesn}
L^n_{k/n}=\#\set{j\leq k: U^n_{j/n}=0}
\end{lesn}of $U^n$, scaled as $L^n/\sqrt{n}$, converges weakly as $n\to\infty$, to the local time of Brownian motion. 
Fix $p_n\in [0,1]$. 
Now consider the Markov chain $\tilde V^n$, 
having the same transition probabilities $\tilde p_{i,j}$ as $U^1$ at states $i\geq 2$, 
and let  $\tilde p_{1,0}=p_n=1-\tilde p_{1,2}$ and $\tilde p_{0,1}=1$. 
When $p_n\to 0$, the scaled Markov chain $ V^n$ (obtained from $\tilde V^n$) 
still converges to $\abs{B}$ while its local time converges to zero, even without the square root scaling.

\subsection{Statement of the main result}

The setting of our main theorem will feature a conti\-nuous-time \cadlag\ stochastic process $\mathfrak{X}$ 
whose state-space is Polish. 
Recall that a function from $[0,\infty)$ into the state-space is  \cadlag\ if it is right-continuous and admits left limits. 
Recall also that the set of \cadlag\ functions can be given a metric which turns it into a Polish space, called the Skorohod space (cf. \cite{MR1700749} or \cite{MR1876437}). 
It is with respect to this metric that we consider convergence in probability of random elements in the Skorohod space. 

Our setting assumes that $\mathfrak{X}$ is regenerative  at an instantaneous and regular point $x$ at which it starts. 
Being regenerative at a point means that, 
whenever $\mathfrak{X}_t=x$, 
if $d_t=\inf\set{s>t: \mathfrak{X}_s=x}$, 
then $\mathfrak{X}_{d_t+\cdot}$ has the same law as $\mathfrak{X}$ 
and is independent of the latter stopped at time $d_t$. 
The point being regular and instantaneous means that 
whenever $\mathfrak{X}_t=x$, for any $\eps>0$ there exist $t_1,t_2\in (t,t+\eps)$ such that 
$\mathfrak{X}_{t_1}\neq x$ and $\mathfrak{X}_{t_2}=x$. 
For such a process, we can define a local time at state $x$, denoted $L$, 
which has continuous and increasing trajectories that are supported on the set of times $\mathfrak{X}$ visits $x$. 
A thorough treatment can be found in \cite[Ch. IV]{MR1406564} or \cite[Ch. 22]{MR1876169}. 
Local time is determined up to a normalization constant. 
Its right-continuous inverse is a subordinator (cf. \cite[Ch. III]{MR1406564}): 
a process with non-decreasing trajectories with independent and stationary increments. 
Let $\mu$ be its L\'evy measure. 
Since $x$ is regular and instantaneous, $\imf{\mu}{(0,\eps)}=\infty$ for any $\eps>0$. 

We will also consider a sequence of discrete-time processes $(\mathfrak{X}^n,n\geq 1)$ that are regenerative and start at $x$ and are  
indexed by $\na/n$ where $\na=\set{0,1,2,\ldots}$. 
Then, the successive excursions of $\mathfrak{X}^n$ away from $x$ are independent and identically distributed%
; we call $\mu_n$ the common law of the excursion lengths. 
Let $L^n$ be the (natural) discrete local time at $x$ of $\mathfrak{X}^n$ defined for $k\in\na$ by
\begin{linenomath}\begin{equation}
\label{equationDefiningDiscreteLocalTime}
L^n_{k/n}=\#\set{j\leq k: \mathfrak{X}^n_{j/n}=x}.
\end{equation}\end{linenomath}Define $g_t$ and $d_t$ (resp. $g^n_t$ and $d^n_t$) 
to be the left and right endpoints of the excursion of $\mathfrak{X}$ 
(resp. $\mathfrak{X}^n$) that straddle time $t\geq 0$, more precisely
\begin{linenomath}
\begin{align}
\label{definitonOfgn}
g^n_t=\max\set{k/n\leq t: \mathfrak{X}^n_{k/n}=x}, 
\quad \quad\quad &d^n_t=\min\set{k/n> t: \mathfrak{X}^n_{k/n}=x} 
\\
\quad 
g_t=\sup\set{s\leq t: \mathfrak{X}_s=x},
\quad\quad\text{ and }\quad 
&d_t=\inf\set{s>t: \mathfrak{X}_s=x}.\nonumber
\end{align}\end{linenomath}


Our main theorem, which will be later illustrated by examples, is the following: 
\begin{theorem}
\label{mainLimitTheorem}
Let $\mathfrak{X}$ be a continuous time \cadlag\ 
process whose state space is Polish and regenerates at a state $x$ that is 
regular and instantaneous and on which $\mathfrak{X}$ starts. 
For any normalization of local time for $\mathfrak{X}$ at $x$,  let $\mu$ be the L\'evy measure of inverse local time. 
Let $(\mathfrak{X}^n)$ be a sequence of discrete time 
regenerative processes where each $\mathfrak{X}^n$ are indexed by $\na/n$ and start at $x$. 
Denote by $\mathfrak{X}^n$ also the piecewise constant \cadlag\ extension to $[0,\infty)$. 
Assume that $\mathfrak{X}^n\to \mathfrak{X}$ 
in probability as random elements of Skorohod space and that, for any fixed $t>0$, 
$g^n_t$  and $d^n_t$ converge in probability to $g_t$ and  $d_t$ respectively. 
For any $l>0$, define $S_{l}=(l,\infty)$, and 
\begin{linenomath}
\begin{equation}
\label{normalizationSequenceDefinition}
a_n=\imf{\mu}{S_l}/\imf{\mu_n}{S_l}, 
\end{equation}
\end{linenomath}whenever $l$ is not an atom of $\mu$. 
Then $(\mathfrak{X}^n,L^n/a_n)\to (\mathfrak{X},L)$ in probability as $n\to\infty$. 
\end{theorem}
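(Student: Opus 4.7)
My plan is to work with the inverse local times, where the convergence becomes a triangular-array limit theorem for sums of i.i.d.\ non-negative random variables. Let $\xi^n_1,\xi^n_2,\ldots$ be the i.i.d.\ lengths (in real time) of the successive excursions of $\mathfrak{X}^n$ away from $x$, all with common law $\mu_n$, and set $\tau^n_k=\xi^n_1+\cdots+\xi^n_k$; this is the $k$-th visit time of $\mathfrak{X}^n$ to $x$ in real time, i.e., the right-continuous inverse of $L^n$. Analogously, the right-continuous inverse $\tau$ of $L$ is a driftless subordinator with L\'evy measure $\mu$, the absence of drift coming from the instantaneity of $x$. My first goal is to establish the Skorohod convergence in probability $\tau^n_{\lfloor a_n s\rfloor}\to \tau_s$.

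By the classical triangular-array theorem for sums of i.i.d.\ non-negative random variables (see e.g.\ Kallenberg's \emph{Foundations of Modern Probability}, Thm.~15.28, or Jacod--Shiryaev, Ch.~VII), this functional convergence reduces to two ingredients: (i) vague convergence $a_n\mu_n\to \mu$ on $(0,\infty)$; and (ii) the truncated-moment control $\limsup_n a_n\int_{(0,\eta]} x\,\mu_n(\d x)\to 0$ as $\eta\downarrow 0$, which enforces the vanishing drift of the limit. Condition~(i) holds by construction at the distinguished level $l$, since $a_n\mu_n(S_l)=\mu(S_l)$; its real content is the relative-tail convergence $\mu_n(S_{l'})/\mu_n(S_l)\to \mu(S_{l'})/\mu(S_l)$ for every $l'>0$ that is not an atom of $\mu$.

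To establish this relative-tail convergence, the plan is to exploit both hypotheses in tandem: $\mathfrak{X}^n\to \mathfrak{X}$ in Skorohod, and $(g^n_t,d^n_t)\to (g_t,d_t)$ in probability for each fixed $t>0$. Because $x$ is regular and instantaneous, the excursions of $\mathfrak{X}$ of length $>l'$ form a locally finite family of intervals; the convergence of the straddling endpoints at deterministic times, combined with the Skorohod convergence of the paths, should pin down each such excursion of $\mathfrak{X}^n$ and match it with its counterpart in $\mathfrak{X}$ up to a negligible error. Excursion theory provides a formula expressing the law of $d_t-g_t$ through the potential measure of $\tau$ and the tails of $\mu$, and the same formula holds for $\mathfrak{X}^n$ with $\mu_n$. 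Letting $t$ range over a countable dense set and combining these identities with the hypothesised convergence of $(g^n_t,d^n_t)$ should deliver the relative-tail convergence, and a quantitative version of this argument is expected to also yield the tightness condition~(ii).

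Once $\tau^n_{\lfloor a_n\cdot \rfloor}\to \tau$ is obtained, $L^n/a_n\to L$ in probability follows by inverting, since $\tau$ is almost surely strictly increasing (because $\mu$ is infinite on every neighbourhood of $0$). Joint convergence $(\mathfrak{X}^n,L^n/a_n)\to (\mathfrak{X},L)$ in probability is then automatic, as convergence in probability of marginals yields joint convergence in probability. The main obstacle I foresee is condition~(i): one has direct information only about the size-biased excursion straddling each time $t$, and must upgrade this to full vague convergence of the L\'evy measures. The subsidiary difficulty is controlling the mass of $\mu_n$ near zero in~(ii), since the hypotheses say nothing directly about small excursions, so some compactness argument leveraging the instantaneity of $x$ and the Skorohod convergence on bounded intervals will likely be required.
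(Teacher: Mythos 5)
Your reduction to inverse local times is the right starting move, and your overall skeleton — prove convergence of $\tau^n_{\lfloor a_n\cdot\rfloor}$ to $\tau$, then invert using that $\tau$ is strictly increasing — matches the paper's structure. But there is a genuine gap in the central step: you invoke the classical triangular-array theorem for i.i.d.\ sums to obtain Skorohod convergence of $\tau^n_{\lfloor a_n\cdot\rfloor}$ to $\tau$ \emph{in probability}, whereas triangular-array theorems only give convergence \emph{in distribution}. The two i.i.d.\ arrays $(\xi^n_j)_j$ and the limiting subordinator $\tau$ live on a common probability space here, but nothing in the triangular-array machinery couples the summation to the particular realisation of $\tau$. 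Your conditions (i)–(ii), even if established, would produce only weak convergence, which is not what the theorem asserts.

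The paper resolves this by splitting $\tau^n=\tau^{n,>a}+\tau^{n,\leq a}$ into contributions from excursions of length greater than a fixed cutoff $a$ and the rest. The large-excursion part $\tau^{n,>a}$ \emph{does} converge in probability, because its jump sizes and jump times can be matched, realisation by realisation, with specific excursions of the limiting path $\mathfrak{X}$ using the assumed convergence of $(g^n_t,d^n_t)$; the key technical tool is Fristedt--Taylor's nested-array approximation of local time, which allows the jump times of $\tau^{>a}$ to be recovered as limits of counts of small excursions. Weak convergence is then used only to show that the leftover piece $\tau^{n,\leq a}_{a_n\cdot}$ is uniformly small once $a$ is small, via a separate argument based on the Laplace-transform identities for $g_T$ and $d_T$ under an independent exponential time $T$. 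Combining the in-probability convergence of the large-jump part with the uniform smallness of the small-jump part yields convergence in probability of $\tau^n$ itself. Without some version of this decoupling, I do not see how to upgrade weak convergence to convergence in probability, so this is not a technical refinement you can defer — it is the crux of the argument.

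Two smaller issues. First, you claim the absence of drift in $\tau$ follows from the instantaneity of $x$; this is false — a regular, instantaneous point can nonetheless have a zero set of positive Lebesgue measure (think of sticky Brownian motion), and the paper explicitly allows and handles a drift term $d\,\id$ by subtracting it off. Second, your plan for proving vague convergence $a_n\mu_n\to\mu$ via Laplace transforms of $d_t-g_t$ is plausible, but the paper's Proposition~\ref{rightAndLeftEndpointConvergenceProposition} takes a more direct pathwise route: it shows along a subsequence that the length of the first excursion exceeding $l$ converges almost surely, which immediately gives weak convergence of the conditional laws $\mu_n(\,\cdot\mid S_l)$ and then vague convergence of $a_n\mu_n$. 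You should compare whether your analytic approach actually closes cleanly without the pathwise identification of individual excursions — the paper suggests you need the pathwise matching anyway for the in-probability convergence.
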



The above theorem reduces convergence of local times to questions regarding hitting or last visit times. 
We shall give numerous examples where the hypotheses are verified. 
The proof techniques range from simple pathwise arguments 
to establishing convergence in law of these random times. 
Note that our version of the approximating local times is by counting. 
There are alternatives such as using overshoots across a level as in \cite{MR665738}, cf.~\cite[Thm~5]{2018arXiv180805010M}.

When considering excursion lengths $d^n_t-g^n_t$, note that these take the values $1/n,2/n,\ldots$, 
implying that $\mu_n$ does not charge $0$. 
Remark that $g^n_t$ and $d^n_t$ make sense for all $t>0$ and not only on $\na/n$. 
Finally, the local time $L^n$ starts at $1$ and not at zero%
. 
Also, note that the temporal scaling for local times is built into the definition 
because of the assumption that the time parameter of $\mathfrak{X}^n$ is in $\na/n$. 
One of the consequences of our main theorem is that the choice of the level $l$ in the definition of $a_n$ is irrelevant. 
We also state and prove this directly in Proposition \ref{rightAndLeftEndpointConvergenceProposition} below. 


Note that, for the example $V^n$ described in the second paragraph of this section,  
we have $g^n_t=0$ with probability tending to $1$, 
so that it does not converge to $g_t$ 
(which has the arcsine law by a celebrated theorem of L\'evy also found in \cite{MR0000919}). 
Hence, the assumptions of our theorem do not hold. 


The proof of the above theorem uses 
the concept of nested arrays introduced in \cite{MR579823} and basic aspects of subordinators found in Chapter III of \cite{MR1746300}. 
It will be given in Section~\ref{SectionOnProofOfMainLimitTheorem} below, 
after we state and prove some of its implications.

Note that Theorem \ref{mainLimitTheorem} concerns only the convergence of local time at a single level. 
If the theorem is applicable at all levels, 
as it features convergence in probability 
(and not only weak convergence),
one would obtain from it at
least the weak convergence of the scalled counting processes to the 
limiting local time field at any finite set of levels. 
To conclude the convergence of random fields,  
additional analysis of the continuity properties of the limiting local time field
and of tightness would be necessary. 
As a firts step in this direction, 
we have chosen to restrict ourselves to a single level so that we may only require the
regenerative property of the processes involved. 

\subsection{Applications of the main result}

In our applications 
of Theorem \ref{mainLimitTheorem} above (Theorems~\ref{generalMarkovianTheorem},~\ref{randomWalkLocalTimeLimitTheorem},~\ref{reflectedRWTheorem}~\&~\ref{TheoremOnConvergenceOfLocalTimesOfGWIProcesses}), 
we assume only the weak convergence of the $\mathfrak{X}^n$ in various settings, 
then apply Skorohod's theorem 
so that the convergence holds almost surely in an adequate probability space and argue, case by case, 
that in this same space, 
the random variables $g^n_t$ and $d^n_t$ also converge in probability to $g_t$ and $d_t$. 
This allows us to get weak convergence of the $\mathfrak{X}^n$ together with their scaled local times. 
If we instead assume convergence in probability of $\mathfrak{X}^n$, 
then we can also conclude the convergence in probability of the local times in all the applications. 
Our processes $\mathfrak{X}^n$ are initially indexed by $\na/n$;
they can be extended to $[0,\infty)$ by making them \cadlag\ and constant between the consecutive points of $\na/n$.

Our next result is an indication of the generality of Theorem \ref{mainLimitTheorem}. 
We chose the hypotheses to make the proof as simple as possible. 
\begin{theorem}
\label{generalMarkovianTheorem}
Let $Z$ be regenerative at $z\in \re$ and satisfy 
the time-change equation
\begin{linenomath}
\begin{equation}
\label{TCEwrtStableProcess}
Z_t=z+X_{\int_0^t \imf{f}{Z_s}\, ds}
+\int_0^t \imf{b}{Z_s}\, ds+Y_t, \quad \text{for all }t\in [0,\infty), 
\end{equation}
\end{linenomath}where $X$ is a spectrally positive L\'evy process of infinite variation, 
$Y$ is a subordinator and $f\geq 0$. 
Assume that $f$ is 
continuous and non-zero at $z$ and 
$b$ is bounded in a neighborhood of $z$. 
Then $Z$ is regular and instantaneous at $z$ and we denote by $L$ its local time at $z$.
Finally, assume that $Z$ is quasi-left continuous. 

Let $(Z^n)$ be
a sequence of discrete-time and discrete-space processes, where each $Z^n$
is defined on $\na/n$, takes values in $z+\z /b_n$ (for some $b_n>0$) 
and is 
downward skip-free and regenerative at $z$. 
If the sequence $(Z^n)$ converges weakly to $Z$ 
then, for the sequence of normalising constants $(a_n)$ in Equation~\eqref{normalizationSequenceDefinition}  
and local times $L^n$ in~\eqref{equationDefiningDiscreteLocalTime}, 
we have the convergence $(Z^n, L^n/a_n)\to (Z,L)$ in distribution as $n\to\infty$. 
%
\end{theorem}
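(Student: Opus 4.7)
The plan is to deduce the theorem from our main result, Theorem~\ref{mainLimitTheorem}, by verifying its three non-trivial hypotheses on a single probabilistic coupling: (a) $z$ is regular and instantaneous for $Z$; (b) the weak convergence $Z^n\Rightarrow Z$ can be upgraded to an almost sure Skorohod coupling; (c) on that coupling, $g^n_t\to g_t$ and $d^n_t\to d_t$ in probability for each fixed $t>0$. Once (a)--(c) are established, Theorem~\ref{mainLimitTheorem} yields joint convergence in probability on the coupling space, which translates immediately to joint weak convergence on the original space.

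For (a), I would exploit the time-change equation \eqref{TCEwrtStableProcess}. Since $f$ is continuous with $\imf{f}{z}>0$, the clock $\tau_t=\int_0^t \imf{f}{Z_s}\,ds$ grows at rate approximately $\imf{f}{z}>0$ for small $t$, so $X\circ\tau$ inherits the zero-neighborhood behaviour of $X$. A spectrally positive L\'evy process of infinite variation is regular for $\{0\}$ from both sides: it visits every neighbourhood of $0$ immediately and returns to $0$ immediately. These features survive the addition of the absolutely continuous drift $\int_0^\cdot \imf{b}{Z_s}\,ds$ (bounded near $z$) and the non-decreasing perturbation $Y$, since neither can cancel the infinite-variation oscillations of $X$ nor preclude the immediate return to $z$. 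Hence $Z$ is regular and instantaneous at $z$.

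For (b) I would apply the Skorohod representation theorem on the Polish space $\sko$, producing a coupling on which $Z^n\to Z$ almost surely in the Skorohod topology. The crux is step (c). The only jumps of $Z$ come from $Y$ and from the time-changed $X$, both non-negative, so $Z$ is itself spectrally positive and can cross level $z$ downward only continuously. The downward skip-free $Z^n$, living on the lattice $z+\z/b_n$, must likewise hit $z$ exactly whenever it transitions from $\{>z\}$ to $\{<z\}$. Combining a.s.\ Skorohod convergence with these complementary one-sided jump structures, one shows that the set of visits of $Z^n$ to $z$ near $t$ is stable in the limit, yielding $g^n_t\to g_t$ and $d^n_t\to d_t$. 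Quasi-left continuity of $Z$ is invoked to rule out the pathological event that $Y$ or $X\circ\tau$ jumps exactly onto $z$ at the times $g_t$ or $d_t$; combined with the regularity established in (a), this ensures that the straddling endpoints are approached continuously in the Skorohod sense.

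The main obstacle is (c), since hitting and last-visit times are notoriously discontinuous functionals on $\sko$. The argument must simultaneously prevent $Z^n$ from spuriously visiting $z$ while $Z$ stays bounded away from it---handled by uniform convergence on compact sub-intervals strictly inside the excursion $(g_t,d_t)$---and from missing a visit of $Z$ to $z$---handled by the spectral positivity of $Z$, the downward-skip-free property of $Z^n$, and quasi-left continuity to deal with the endpoint times $g_t$ and $d_t$ themselves.
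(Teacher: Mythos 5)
Your proposal follows the paper's proof strategy: verify the hypotheses of Theorem~\ref{mainLimitTheorem} by pathwise arguments on a Skorohod coupling, deriving regularity and the instantaneous character from the infinite-variation oscillations of $X$, using quasi-left continuity to keep $Z$ (and its left limits) away from $z$ in the \emph{interior} of each excursion, and exploiting the one-sided jump structures (spectral positivity of $Z$, downward skip-freeness of $Z^n$) to force and stabilize visits to $z$. The paper has to work harder than your sketch at two points: regularity of $z$ is obtained from the precise fact that $\limsup_{t\downarrow 0}X_t/t=+\infty$ and $\liminf_{t\downarrow 0}X_t/t=-\infty$ almost surely (so the $O(t)$ drift and the subordinator are absorbed after the time change), and the lower bound $\liminf_n g^n_t\geq g_t$ requires the perfectness of the closed zero set together with regularity after each $d_q$ to produce sign changes of $Z-z$ arbitrarily close to the left of $g_t$, which are then transferred to $Z^n$ via Skorohod convergence and skip-freeness.
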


The fact that $Z$ solves \eqref{TCEwrtStableProcess} implies that it is
\cadlag\ and non-explosive. We denote by $Z_{t-}$ the left limit of $Z$ at $t>0$ and $Z_{0-}=Z_0$.
A solution $Z$ to the SDE\begin{linenomath} \begin{equation}
\label{SDEwrtStableProcess}
Z_t=z+\int_0^t \imf{g}{Z_{s-}}\, d\tilde X_s+\int_0^t \imf{b}{Z_s}\, ds, \quad \text{for all }t\in [0,\infty), 
\end{equation}
\end{linenomath}is a special case of \eqref{TCEwrtStableProcess} 
when $\tilde X$ is a spectrally positive stable process with index $\alpha\in (1,2]$, 
$Y=0$
and $g\geq 0$ if $\alpha\in (1,2)$ 
(see Subsection \ref{SubsectionOnApplicationOfMainTheoremToMarkovProcesses} for the proof of this assertion).
If $g$ and $b$ are globally Lipschitz then, according to \cite[Ch. 5]{MR2020294}, 
the (pathwise) unique (strong) solution $Z$ to the SDE \eqref{SDEwrtStableProcess} is a non-explosive strong Markov process, hence regenerative at any state. 
If $g(z)\neq 0$ then $Z$ satisfies the hypotheses of Theorem \ref{generalMarkovianTheorem}. 
In particular, we can consider the case when $\tilde X$ is a Brownian motion, so that $Z$ is a diffusion process. 
No matter how we approximate this diffusion by skip-free to one side regenerative processes, 
we deduce the joint convergence of processes together with the scaled local times. 

Another application occurs when $f=1$, $b=0$ and $Y=0$ in the time-change equation \eqref{TCEwrtStableProcess}, 
so that $Z$ (which equals $X$!) is an infinite variation spectrally positive L\'evy process. 
We deduce convergence of local times for downward skip-free regenerative processes converging to $X$, 
and in particular for sequences of downward skip-free random walks converging to $X$. 
The forthcoming Theorem \ref{randomWalkLocalTimeLimitTheorem} explores the case of two-sided jumps, 
where the arguments are more difficult and we restrict ourselves to approximations by a scaled random walk and where $X$ is a stable L\'evy process. 

Even though time-change equation \eqref{TCEwrtStableProcess} has been less studied than SDE \eqref{SDEwrtStableProcess}, it has been analyzed when $f=\id$, $b$ is linear and $Y$ is independent of $X$
in \cite{MR3098685} and shown to have strong approximation properties. 
In this case, the process $Z$ is a continuous-state branching process with immigration (hence Feller). 
Necessary and sufficient conditions for non-explosion of $Z$ are given in \cite[Corollary 5]{MR3098685}; 
when these hold, 
we conclude the convergence of local times at $z>0$ 
whenever $Z$ is approximated by downward skip-free regenerative processes. 
The family of continuous-state branching processes with immigration is exactly that of 
scaling limits of Galton-Watson processes with immigration. 
The latter, however, are not downward skip-free. 
We will be considering the convergence of local times at level zero for GWI processes 
(related to extinctions in the population) 
in Theorem \ref{TheoremOnConvergenceOfLocalTimesOfGWIProcesses} below. 
Finally, note that one can impose global conditions on $f$ and $b$ such that Theorem~\ref{generalMarkovianTheorem} applies at all levels $z$. 

Our next application of Theorem \ref{mainLimitTheorem} concerns weak
convergence of local times at level zero for lattice random walks in the domain
of attraction of a stable L\'evy process.  This problem has been addressed for
local times of simple symmetric random walks in \cite{MR665738} using nonstandard analysis.
The case of finite variance random walks was considered in \cite{MR636771}. The case when the 
increment distribution has infinite variance but is in the 
domain of attraction of an $\alpha$-stable law 
was analysed in \cite{MR749918}. The proof uses a generalisation of a theorem of Getoor~\cite{MR0397897}
and crucially depends on the fact that the laws of the limiting process and the approximating sequence
have independent increments. 
Theorem~\ref{mainLimitTheorem} applied in this setting yields the following result. 

\begin{theorem}
\label{randomWalkLocalTimeLimitTheorem}
Let $X^1$ be a random walk started at zero whose jump distribution has span $1$, that is, $\se(e^{iu X^1_1})=1$ if and only if $u\in 2\pi\z$. 
Let $X$ be a stable L\'evy process with index $\alpha\in (1,2]$ and local time $L$ at zero.  
Asume the existence of constants $(b_n)$ such that $X^1_n/b_n$ converges weakly to $X_1$. 
Let $X^n_{k/n}=X^1_{k}/b_n$ and let $L^n$ be the (natural or discrete) local time of $X^n$ at zero defined in \eqref{equationDefiningDiscreteLocalTime}. 
Then, for the sequence $(a_n)$ defined in \eqref{normalizationSequenceDefinition}, we have $(X^n,L^n/a_n)\to (X,L)$ in distribution as $n\to\infty$. 
\end{theorem}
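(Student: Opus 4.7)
The plan is to apply Theorem~\ref{mainLimitTheorem} with $\mathfrak{X}=X$ and $\mathfrak{X}^n=X^n$. Most of the hypotheses are classical: $X$ is \cadlag, starts at $0$, and is regenerative at $0$ by the strong Markov property; the point $0$ is regular and instantaneous for a one-dimensional stable L\'evy process of index $\alpha\in(1,2]$, the case $\alpha=2$ being Brownian motion and the case $\alpha\in(1,2)$ following from recurrence together with the absolute continuity of the one-dimensional marginals of $X$. For the discrete approximations, the span-$1$ hypothesis together with $\alpha>1$ (which forces mean zero of $X^1_1$) makes $X^1$ recurrent by Chung--Fuchs, so $X^n$ returns to $0$ almost surely and the strong Markov property yields the regenerative property at $0$. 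Finally, $X^n\to X$ weakly in Skorohod topology is the classical stable functional limit theorem.

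The substantive step is the convergence of the straddling endpoints: $g^n_t\to g_t$ and $d^n_t\to d_t$ in probability for any fixed $t>0$. I would pass to an almost-surely convergent version via Skorohod's representation theorem. Since $X_t$ has a density on $\re$, $X_t\neq 0$ a.s., so $g_t<t<d_t$ a.s., and by regularity of $0$ both $g_t$ and $d_t$ are accumulation points of the zero set of $X$. The convergence can then be proved by sandwiching. The inequalities $d^n_t\geq d_t-\eps$ and $g^n_t\leq g_t+\eps$ (with high probability) follow from the Skorohod approximation on compact time intervals together with the fact that $X$ is bounded away from $0$ on compact sub-intervals of $(g_t,d_t)$ strictly interior to it, so any lattice zero of $X^n$ inside $(g_t+\eps,d_t-\eps)$ would force $X$ to come too close to $0$ on such a sub-interval. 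The opposite inequalities $d^n_t\leq d_t+\eps$ and $g^n_t\geq g_t-\eps$ require a \emph{genuine} lattice zero of $X^n$ in the specified small window around $d_t$ or $g_t$. This is produced by combining the accumulation of actual zeros of $X$ at $d_t$ and $g_t$ with the local limit theorem for span-$1$ lattice random walks in the stable domain of attraction, which guarantees a macroscopic density of exact zeros of $X^n$ near any zero of $X$.

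The main obstacle is this last step, namely converting the near-zero behaviour of $X$ into exact-zero visits of the lattice chain $X^n$; this is where the span-$1$ assumption enters in an essential way, through the local limit theorem. Once $g^n_t\to g_t$ and $d^n_t\to d_t$ in probability are secured, all the hypotheses of Theorem~\ref{mainLimitTheorem} are verified and it yields the claimed convergence $(X^n,L^n/a_n)\to (X,L)$ in distribution.
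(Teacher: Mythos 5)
Your high-level plan matches the paper's: apply Theorem~\ref{mainLimitTheorem}, pass to an almost-sure Skorohod representation, verify the regenerative/regularity hypotheses, show convergence of $d^n_t$ and $g^n_t$. The one-sided inequalities $d^n_t\geq d_t-\eps$ and $g^n_t\leq g_t+\eps$ from Skorohod convergence and bounded-away-from-zero behaviour on interior compacts of an excursion are correct and are also in the paper.

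The gap is in the hard direction. ``The local limit theorem guarantees a macroscopic density of exact zeros of $X^n$ near any zero of $X$'' is the right intuition, but it is not a proof, and the step it hand-waves is the centre of the argument. What one actually needs is the following: if $x_n\in\z/b_n$ with $x_n\to x\neq 0$, the hitting time of $0$ by $X^n$ started at $x_n$ converges weakly to the hitting time of $0$ by $X$ started at $x$. This is Lemma~\ref{randomWalkHittingTimeLimitLemma} in the paper, and its proof is not a one-liner from the LLT: it goes through a discrete resolvent identity of the form $\se_{x_n}[e^{-\lambda H^n_0}]=\lambda u^n_\lambda(x_n)/(u^n_\lambda(0)+e^{\lambda/n}-1)$, uses the LLT plus Potter bounds and dominated convergence to prove $b_n u^n_\lambda(x_n)\to u_\lambda(x)$, and only then deduces convergence of Laplace transforms. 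The LLT gives you the unconditional rate $\p(X^1_k=0)\sim c/b_k$; turning that into a statement about the walk started from a point near (but not at) zero actually hitting zero quickly requires the Markov-property/resolvent argument. Without something equivalent, ``macroscopic density of exact zeros near a zero of $X$'' is an assertion, not a proof.

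The second gap, and a more structural one, is that you treat $g^n_t$ and $d^n_t$ symmetrically, but they are not. For $d^n_t$ you can condition at the stopping time $d^n_{\eps,t}$ and invoke the forward Markov property together with the hitting-time limit. For $g^n_t$ you need the corresponding statement \emph{backwards in time}: conditionally on the walk's position and time of last visit to $(-\eps,\eps)$ before $t$, how far back is the last true zero? There is no forward Markov argument that gives this. The paper resolves it by time-reversal of the \emph{bridge} of the random walk, invoking the backward strong Markov property, and proving a bridge version of the hitting-time convergence via the local absolute-continuity relationship $\imf{\p^{\floor{nt},n}_{x_n,0}}{A}=\se^n_{x_n}\bigl[\indi{A}\,p^n_{t-H^n_0}(0,0)/p^n_t(x_n,0)\,\indi{H^n_0\leq t}\bigr]$ and Scheff\'e's lemma, together with weak continuity and time-reversal of stable bridges. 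None of this is in your sketch; as written, the argument for $g^n_t\geq g_t-\eps$ is missing, and it represents roughly half of the proof.
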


As mentioned above, 
if we assume instead of weak convergence that the sequence of random walks in Theorem~\ref{randomWalkLocalTimeLimitTheorem} converge in probability, 
then the joint convergence with the local time also holds in probability. 
The scaling constants $a_n$ are regularly varying 
as  they are given in \cite{MR749918} in terms of  the asymptotic inverse 
(obtained through the de Bruijn conjugate in \cite{MR898871}) 
of the tails of the distribution of the increment of $X^1$.
This description makes them them explicit in some special cases (e.g. finite variance) but clearly setting dependent. 
Our proof of Theorem~\ref{randomWalkLocalTimeLimitTheorem} relies on 
Theorem~\ref{mainLimitTheorem} and 
the classical local limit theorems for random walks in the domain of attraction of stable processes. 
In fact, pathwise methods do not seem to be applicable in this context 
because the limiting process has two-sided jumps.  

On the other hand, the above theorem can be applied at any level and the limiting process $X$ admits a bicontinuous family of local times. 
We deduce the at least the finite-dimensional convergence of the scaled two-parameter local time process of $X^1$ to the limiting one for $X$.

We are also interested in multidimensional applications, 
e.g. the non-homogeneous random walks of \cite{2018arXiv180107882G}, 
which can be recurrent in any ambient dimension. 
These walks are spatially inhomogeneous  Markov processes, 
with a non-Markovian radial component which is regenerative at zero. 
Also,~\cite{2018arXiv180107882G} contains an invariance principle for these random walks 
(to a (Markovian) process solving a highly singular SDE) 
with the radial component of the limit equal to a Bessel process. 
We believe that with the roadmap set up for the proof of Theorem \ref{randomWalkLocalTimeLimitTheorem} 
we will be able to prove convergence of the local times. 
What we are lacking at the moment of writing is a local limit theorem for the one-dimensional distributions 
and a construction of its bridge laws, which we leave open for future work. 

Our next application of Theorem \ref{mainLimitTheorem} is to reflected random walks. 
Let $X$ be a L\'evy process. 
Define the running minimum process of $X$ by the formula $\underline X_t=\inf_{s\leq t} X_s$. 
Consider the reflection of $X$ at its running minimum defined by $R=X-\underline X$. 
Recall from Proposition 1 in \cite[Ch. VI]{MR1406564} that $R$ is a Feller process, hence regenerative at any state. 
Let $X^n$ be a random walk indexed by $\na/n$ and define the running miminum $\underline X^n$ 
by  $\underline X^n_{j/n}=\min_{i\leq j}X^n_{i/n}$ 
as well as the reflected process $R^n$ by $R^n_{j/n}=X^n_{j/n}-\underline X^n_{j/n}$ for $j\in \na$. 
\begin{theorem}
\label{reflectedRWTheorem}
Assume that $X^n_1\to X_1$ in distribution as $n\to\infty$. 
If $0$ is regular for 
both half-lines $(-\infty,0)$ and $(0,\infty)$ for the L\'evy process $X$, 
then 
$0$ is instantaneous and  regular for the reflected process $R$. 
Let $L$ be a local time at zero for $R$ and $L^n$ be the corresponding discrete local time for $R^n$. 
Then, 
for the sequence $(a_n)$ of Equation~\eqref{normalizationSequenceDefinition} corresponding to the local times of $R$ and $R^n$, 
we have $(R^n,L^n/a_n)\to (R,L)$ in distribution as $n\to\infty$. 
\end{theorem}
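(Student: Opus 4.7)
The plan is to derive the result as an application of Theorem~\ref{mainLimitTheorem} with $\mathfrak{X}=R$ and $\mathfrak{X}^n=R^n$. This requires three verifications: (i) that $0$ is regular and instantaneous for $R$, and that $R$ regenerates at $0$; (ii) that $R^n\to R$ in the Skorohod topology, upgraded to almost-sure convergence by Skorohod's representation; and (iii) that for every fixed $t>0$, the excursion endpoints satisfy $g^n_t\to g_t$ and $d^n_t\to d_t$ in probability. Once these are established, the theorem follows directly.

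Points (i) and (ii) are essentially routine. Since $R$ is Feller (recorded in the excerpt), it is strong Markov, so regeneration at $0$ follows as soon as $0$ is regular. The zero set of $R$ coincides with the set of running-infimum times of $X$: regularity of $0$ for $(0,\infty)$ for $X$ ensures $X$ is strictly positive on times arbitrarily close to $0$, which gives instantaneity of $0$ for $R$; regularity of $0$ for $(-\infty,0)$ for $X$ ensures $X$ attains new infima arbitrarily close to $0$, which gives regularity of $0$ for $R$. For (ii), the single-time hypothesis $X^n_1\to X_1$ combined with the triangular-array L\'evy structure of $X^n$ upgrades to $X^n\to X$ in Skorohod topology by the classical invariance principle for L\'evy processes (cf.\ Jacod--Shiryaev, Ch.~VII). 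The reflection functional $x\mapsto x-\inf_{[0,\cdot]}x$ is continuous on a set of full measure under the law of $X$, so $R^n\to R$ in distribution, and we pass to an almost-sure version by Skorohod's representation.

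The main obstacle is point (iii). The left endpoint $g^n_t$ is exactly the location where $X^n$ attains its minimum on the grid $\na/n\cap[0,t]$; since a L\'evy process with $0$ regular for both half-lines has no intervals of constancy, the minimum of $X$ on $[0,t]$ is almost surely attained uniquely at $g_t$, and the standard continuity of the argmin functional on Skorohod space at paths with a uniquely attained minimum yields $g^n_t\to g_t$ in probability. The right endpoint $d^n_t$ is handled by two one-sided bounds for arbitrary $\delta>0$. For the upper bound $d^n_t\le d_t+\delta$, applying regularity of $0$ for $(-\infty,0)$ for $X$ to the post-$d_t$ process at the strong-Markov time $d_t$ produces a random time $s\in(d_t,d_t+\delta)$ with $X_s<X_{d_t}$; the minimum of $X$ on $[0,d_t+\delta]$ therefore lies strictly below $X_{d_t}$, so its almost surely unique location lies strictly to the right of $d_t$, and argmin convergence places the minimizer of $X^n$ on $\na/n\cap[0,d_t+\delta]$ in $(t,d_t+\delta]$ with high probability. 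Since that minimizer is a grid zero of $R^n$ beyond $t$, it upper-bounds $d^n_t$. For the lower bound $d^n_t\ge d_t-\delta$, one uses that $R>0$ on $(g_t,d_t)$ and, choosing $t$ and $d_t-\delta$ outside the almost surely countable jump set of $R$, that $R$ is bounded away from $0$ on $[t,d_t-\delta]$; the Skorohod convergence $R^n\to R$ then transfers this uniform positivity to $R^n$ on the grid, ruling out grid zeros of $R^n$ in $(t,d_t-\delta)$ for $n$ large. With (i)--(iii) verified, Theorem~\ref{mainLimitTheorem} applies and delivers the desired convergence.
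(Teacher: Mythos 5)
Your overall strategy coincides with the paper's: apply Skorohod representation, then verify pathwise that $0$ is regular and instantaneous for $R$ and that $g^n_t\to g_t$, $d^n_t\to d_t$, then invoke Theorem~\ref{mainLimitTheorem}. The treatment of regularity/instantaneity of $0$ for $R$ and of the upgrade from $X^n_1\to X_1$ to $R^n\to R$ is essentially what the paper does (the paper cites Kallenberg rather than Jacod--Shiryaev, but the content is the same). However, two related steps in your argument are not adequately justified, and the second is a genuine gap.

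First, to get a unique argmin you assert that absence of intervals of constancy implies that $X$ attains its minimum on $[0,t]$ at a unique point. That inference is incorrect: a non-constant \cadlag\ path can attain a given infimum at several separated times, or approach the level $\underline X_t$ through a left limit without attaining it. What is actually needed is the sharper structural statement, due to Millar and quoted at the start of the paper's proof, that for each fixed $t>0$ there is a single $\rho_t\in(0,t)$ with
\begin{equation*}
\set{s\in[0,t]: X_s=\underline X_s\text{ or } X_{s-}=\underline X_t}=\set{\rho_t},\qquad X_{\rho_t}=X_{\rho_t-}=\underline X_{\rho_t}.
\end{equation*}
The ``or $X_{s-}=\underline X_t$'' clause is essential and does not follow from non-constancy.

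Second, and more seriously, your lower bound $d^n_t\ge d_t-\delta$ rests on the claim that ``$R>0$ on $(g_t,d_t)$, together with continuity of $R$ at $t$ and at $d_t-\delta$, implies $R$ is bounded away from $0$ on $[t,d_t-\delta]$.'' This is false for a general \cadlag\ function: one can have $R_s>0$ for every $s$ in a compact interval, continuity at both endpoints, and still $\inf_{[t,d_t-\delta]}R=0$ because some interior left limit $R_{s-}$ vanishes. To rule this out one must know that $X_{s-}>\underline X_t$ for all $s$ strictly inside the excursion straddling $t$ — again precisely the left-limit half of Millar's result. The paper records this at the outset (applying it simultaneously over all rational $t$) and then obtains the clean strict inequality $\underline X_t<\underline X_{[t,d_t-\eps]}$, from which the lower bound follows. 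Without this ingredient or an equivalent argument ruling out left limits touching the running minimum level inside the excursion, your proof of the lower bound has a real gap.
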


The above theorem is essentially \cite[Thm~2 ]{MR2663630}, except that their scaling constant is given by
\[
\tilde a_n=e^{-\sum_{k=0}^\infty \frac{1}{k}\proba{X^n_{k/n}<0}e^{-k/n}}.
\]%
Recall that the constant $a_n$ is proportional to $1/\p(d_0^n>t)$ (where
$d^n_0$ is the endpoint of the excursion straddling $0$ of $R^n$).  By Theorem
4 in \cite{MR2831081}, the constant $\tilde a_n$ can interpreted as
$1/\p(d_0^n>T)$, 
where $T$ is independent of $R^n$ and exponentially distributed. 
Theorem~\ref{reflectedRWTheorem} follows by a simple path-wise argument from Theorem~\ref{mainLimitTheorem}, 
see Section~\ref{SubsectionOnConvergenceOfLocalTimesForReflectedRandomWalks} 
below.


Finally, we apply Theorem \ref{mainLimitTheorem} in the setting of Galton-Watson processes. 
We will only consider a very special offspring distribution, the geometric one, 
to make the proof as simple as possible. 
We will comment in Section \ref{SectionOnConcludingRemarks} on possible extensions. 

\begin{theorem}
  \label{TheoremOnConvergenceOfLocalTimesOfGWIProcesses}
  Let $Z^1$ 
  be a Galton-Waton process with immigration, started at zero, 
  and having reproduction and immigration laws both equal to a geometric distribution on $\na$ 
  with parameters $1/2$ and $p$. 
  Assume that $\delta=p/(1-p)\in (0,1)$.
  Define $Z^n_{j/n}=Z^1_{j}/n$, and let $L^n$ be its local time at zero as in equation
  \eqref{equationDefiningDiscreteLocalTime}. 
  If $Z$ is the unique solution to the SDE
  \begin{equation}
  \label{SDEofBesselType}
  	Z_t=\int_0^t \sqrt{2 Z_s}\, dB_s+\delta t, 
  \end{equation}
then $0$ is instantaneous and regular for $Z$.  
Let $L$ be a Markovian local time of $Z$ at zero.  
Then 
  there exists a constant $c>0$ such that $(Z^n,L^n/n)\to(Z,cL)$ in distribution as $n\to\infty$. 
\end{theorem}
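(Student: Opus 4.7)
The plan is to apply Theorem~\ref{mainLimitTheorem} with $\mathfrak{X}=Z$ and $\mathfrak{X}^n=Z^n$, verifying the two main hypotheses: (a) $Z^n\to Z$ in Skorohod space (enough, after a Skorohod representation, for the application), and (b) the excursion endpoints $g^n_t, d^n_t$ converge to $g_t, d_t$ in probability for each $t>0$. For (a), the invariance principle is the classical diffusion approximation for critical GWI processes: with offspring geom$(1/2)$ (mean $1$, variance $2$) and immigration geom$(p)$ (mean $\delta$), one obtains the CBI limit $Z$ solving~\eqref{SDEofBesselType}. Uniqueness of the SDE and $2\delta\in(0,2)$ make $Z$ a squared Bessel process of dimension $2\delta$, with $0$ regular and instantaneous. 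Via Skorohod's representation theorem, we work on a space where $Z^n\to Z$ almost surely; continuity of $Z$ upgrades this to uniform convergence on compacts.

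For (b), the bound $\limsup_n g^n_t\le g_t$ (and symmetrically $\liminf_n d^n_t\ge d_t$) is immediate from uniform convergence and the strict positivity of $Z$ on the compact $[s,t]$ for any $s\in(g_t,t]$: the latter forces $Z^n>0$ on $[s,t]\cap\na/n$ for $n$ large. The matching $\liminf_n g^n_t\ge g_t$ is the principal technical obstacle, because $Z^n$ takes values in $\na/n$ and may take small positive values without equaling $0$. To handle it, exploit the fact that the zero set of $Z$ is a.s.\ perfect, so for any $\eps>0$ there exists a zero $s$ of $Z$ in $(g_t-\eps,g_t]$. Near such $s$ the integer $Z^1_{\floor{ns}}$ is tight, and from any bounded state $m$ the chain's explicit one-step transition probability $\sip(Z^1_{k+1}=0\mid Z^1_k=m)=(1-p)/2^m$ ensures that $Z^1$ returns to $0$ in a number of steps bounded in probability, i.e., within time $o(1)$ at the scale of $Z^n$. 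Hence $Z^n$ visits $0$ in $(g_t-\eps,g_t+\eps)$ with probability tending to $1$, yielding the lower bound; the argument for $d^n_t$ is symmetric.

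With both hypotheses verified, Theorem~\ref{mainLimitTheorem} produces $(Z^n, L^n/a_n)\to(Z,L)$ where $a_n=\imf{\mu}{S_l}/\imf{\mu_n}{S_l}$. Since the squared Bessel process of dimension $2\delta$ has inverse local time equal to a stable subordinator of index $1-\delta$, one has $\imf{\mu}{S_l}=C_1 l^{-(1-\delta)}$. On the discrete side, the iterate $F^{(j)}(0)=j/(j+1)$ of $F(s)=(2-s)^{-1}$ yields a closed Gamma-function formula for $\sip(Z^1_k=0)$; Stirling gives $\sip(Z^1_k=0)\sim C_0 k^{-\delta}$, and Karamata's Tauberian theorem applied to the renewal identity $\sum_k\sip(Z^1_k=0)z^k=(1-\se[z^\tau])^{-1}$ yields the tail asymptotics of $\mu_n$, hence of $a_n$. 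Choosing the normalization of the Markovian local time $L$ appropriately absorbs these constants into a single $c>0$ and gives the asserted convergence. Throughout, the principal obstacle remains the lower bound in (b): forcing $Z^n$ to truly visit $0$ near each zero of $Z$ by invoking the explicit geometric structure of the chain.
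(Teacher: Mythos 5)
You correctly identify the diffusion approximation $Z^n\to Z$, the upper bounds $\limsup_n g^n_t\le g_t$ and $\liminf_n d^n_t\ge d_t$ via uniform convergence and strict positivity of $Z$ on $(g_t,t]$, and your route to the normalizing constant (asymptotics of $\p(Z^1_k=0)$ from the fractional-linear iterates $F^{\circ j}(0)=j/(j+1)$, then a Tauberian theorem) essentially reproduces the renewal-density computation in the paper's proof of Proposition~\ref{PropositionOnWeakConvergenceOfExcursionEndpointsForGWI}.

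The crucial step, however, contains a genuine gap: the claim that $Z^1_{\lfloor ns\rfloor}$ is tight near a zero $s$ of $Z$. Under the Skorohod coupling all you know is $Z^n_s=Z^1_{\lfloor ns\rfloor}/n\to 0$, i.e.\ $Z^1_{\lfloor ns\rfloor}=o(n)$, not $O_p(1)$; the uniform error $\sup_{[0,T]}|Z^n-Z|$ is expected to decay no faster than Donsker order $n^{-1/2}$, so $Z^1_{\lfloor ns\rfloor}$ should be of order $\sqrt n$. From a state $m\sim\sqrt n$ the one-step return probability $(1-p)2^{-m}$ is exponentially small, and your ``bounded number of steps'' conclusion collapses. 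What is actually needed is a hitting-time estimate: from a level of size $\eta$ in $Z^n$-scale (so $\sim n\eta$ individuals), the return to zero happens within $o_p(1)$ of $Z^n$-time as $\eta\downarrow 0$, uniformly for large $n$. The paper supplies precisely this through a chain of distributional results, Propositions~\ref{propositionOnConvergenceOfExtinctionTimesForOneGWProcess}--\ref{propositionOnWeakConvergenceOfGWIHittingTimesofZero} (pure-GW extinction times, weak convergence of excursion endpoints via the random-cutout representation, GWI hitting times), fed into a three-summand decomposition of $\p(|d_t-d^n_t|>\delta)$ exactly as in the random-walk case. Moreover, for the left endpoint $g^n_t$ the paper uses time-reversal of squared-Bessel bridges; your assertion that ``the argument for $d^n_t$ is symmetric'' silently assumes a reversibility statement that must be, and in the paper is, justified via the bridge theory of \cite{MR656509} and \cite{MR2789508}. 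Finally, note that the GWI chain is \emph{not} downward skip-free away from $0$ (it can jump directly to $0$ from any level), so the pathwise argument of Theorem~\ref{generalMarkovianTheorem} does not transfer; some version of these distributional tools is genuinely required.
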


Note that in Theorem~\ref{SDEofBesselType} we have identified the scaling constants for the local time explicitly.
This is feasible in this setting 
because we consider the specific weak approximation of the continuous-state brancing process
with immigration
$Z$ by the Galton-Watson processes with immigration.  This setting implies that the
zero sets of such processes are random cutouts (more precisely defined in
Subsection \ref{subsectionOnGWI}), a fact that can be leveraged to obtain a weak
limit theorem for $L^n/n$ as an intermediate step in the proof of 
Theorem~\ref{TheoremOnConvergenceOfLocalTimesOfGWIProcesses}.  

The stochastic process $(Z_{2 t})$ 
(which has the same law as $(2Z_{t})$) 
is actually a squared Bessel process of dimension $2\delta$. 
In the above theorem, 
we do not use the fact that the right-continuous inverse of the local time $L$ 
is a stable subordinator of index $1-\delta$, 
as originally proved for the squared Bessel process  in \cite{MR0247668}, 
but can actually deduce it from the proof of Theorem \ref{TheoremOnConvergenceOfLocalTimesOfGWIProcesses}. 
This also explains why we need the restriction on $\delta$:  
$0$ is non-polar for $Z$ (and, in fact, regular and instantaneous)  if and only if $\delta\in (0,1)$. 


\subsection{Organization of the paper}

There are two main ways of verifying that the hypotheses of Theorem \ref{mainLimitTheorem} hold: 
via pathwise arguments and via distributional methods. 
The former is based on elementary arguments and thus 
we first establish, in Section \ref{SectionOnPathwiseApplicationsOfMainTheorem}, 
the two direct applications of Theorem \ref{mainLimitTheorem}
given in Theorems \ref{generalMarkovianTheorem} and \ref{reflectedRWTheorem}. 
We then proceed to the proof of Theorem \ref{mainLimitTheorem} in Section 
\ref{SectionOnProofOfMainLimitTheorem}. 
We finally address, 
in Section \ref{SectionOnApplicationsViaDistributionalMethods},  
two additional applications of our main theorem, obtained via distributional methods and 
represented by Theorems \ref{randomWalkLocalTimeLimitTheorem} and 
\ref{TheoremOnConvergenceOfLocalTimesOfGWIProcesses}. 

\section{Convergence of local times of Markov processes}
\label{SectionOnPathwiseApplicationsOfMainTheorem}
The aim of this section is to prove Theorems \ref{generalMarkovianTheorem} and \ref{reflectedRWTheorem} 
on the convergence of local times for Markov processes and reflected random wallks. 

\subsection{Convergence of local times of Markovian solutions to stochastic equations}
\label{SubsectionOnApplicationOfMainTheoremToMarkovProcesses}
The proof of Theorem~\ref{generalMarkovianTheorem} is based on showing that  
the assumptions of Theorem~\ref{mainLimitTheorem} hold in this setting. 
First of all, we reduce the assertion about the solution $Z$ of SDE~\eqref{SDEwrtStableProcess},
stated immediately after Theorem~\ref{generalMarkovianTheorem},
to the one pertaining the time-change equation in~\eqref{TCEwrtStableProcess}. 
Recall that $g\geq 0$ if $\alpha\in (1,2)$. 
Then Knight and Kallenberg's theorems 
on stochastic integrals with respect to Brownian motion and stable L\'evy processes 
(see \cite{MR1158024} for the latter) 
imply the existence of $X$, 
which has the same law as $\tilde X$ in SDE~\eqref{SDEwrtStableProcess}, 
such that:
\[
Z_t=z+X_{\int_0^t \abs{\imf{g}{Z_s}}^\alpha\, ds}+\int_0^t \imf{b}{Z_s}\, ds. 
\](Kallenberg's theorem would need two independent stable processes 
if $g$ takes also negative values and $\alpha\in (1,2)$.) 
But then, $Z$ is a solution to the time-change equation \eqref{TCEwrtStableProcess} 
with $Y=0$ and $f=\abs{g}^\alpha$. 
Hence, it suffices to prove the stated result for $Z$ satisfying \eqref{TCEwrtStableProcess}. 
We start by establishing the following claim.

\noindent \textbf{Claim.} 
During any excursion of $Z$ away from $z$, 
the regenerative state $z$ is only approached at the endpoints of the excursion. 
Put differently 
for any $t>0$, 
such that $g_t<d_t$, we have  $Z_s\neq z$ and 
$Z_{s-}\neq z$ for any $s\in (g_t,d_t)$ almost surely. 

The former assertion $Z_s\neq z$ follows by the definition of excursion. 
We now establish the latter using quasi-left continuity.
Indeed, for any $t\geq 0$, let us define $T^n_t=\inf\set{s \in [t,d_t): \abs{Z_s-z}<1/n }$ 
(with the usual convention $\inf\emptyset = \infty$).  
Notice that $n\mapsto T^n_t$ is non-decreasing, 
so that we can define $T_t=\lim_{n\to\infty} T^n_t$. 
In particular, on the event $\{T_t<\infty\}$, we have $T_t\leq d_t$.
Thus, since left-limits of the paths of $Z$ exist, $z$ is only reached at endpoints of excursions if and only if  
for all rational $t>0$ 
we have 
$T_t= d_t$ 
on the event $\{T_t<\infty\}\cup\{d_t=\infty\}$.

To prove this assertion, fix $t\geq 0$. 
On the event that $\{T_t<\infty\}$, the assumed quasi-left continuity of $Z$ means that 
$\lim_{n\to\infty} Z_{T^n_t}=Z_{T_t}$ almost surely. 
We now prove that $Z_{T_t}=z$, which implies that $T_t=d_t$ on $\{T_t<\infty\}$. 
Assume that $\lim_{n\to\infty} Z_{T_t^n}\neq z$. 
We now examine the cases when $T_t^n=T_t$ for all sufficiently large $n$ and when $T_t^n<T_t$ for all $n$. 
In the first case, by definition of $T_t^n$, 
there exists a sequence $\eps_n\downarrow 0$ such that $\abs{Z_{T_t+\eps_n}-z}<1/n$, 
implying by the right continuity of the paths of $Z$ that $Z_{T_t}=z$, contradicting our assumption. 
In the second case the existence of the left limit at $T_t$ and  our assumption imply that 
$Z_{T_t-}=\lim_n Z_{T_t^n}\neq z$. 
On the other hand, by the definition of $T_t^n$, 
there exists a positive sequence $\eps_n\to 0$ such that $T_t^n+\eps_n<T_t$ 
and   $\abs{Z_{T_t^n+\eps_n}-z}<1/n$. 
The existence of left limits of the paths of $Z$ at $T_t$ implies that $Z_{T_t-}=\lim_n Z_{T_t^n}=z$, 
a contradiction. 
We have just proved that $\lim_{n\to\infty} Z_{T^n_t}=Z_{T_t}=z$ and so $T_t=d_t$ on $\{T_t<\infty\}$. 
To prove the claim it 
remains to show that  on 
$\{d_t=\infty\}$
we have $T_t=\infty$, which follows by analogous arguments. 


Consider a spectrally positive L\'evy process $X$ of infinite variation. 
Recall that the hypothesis of infinite variation means that either X features a Gaussian component 
or that its jumps (on any finite interval) are not summable.


\begin{description}
\item[Regular and instantaneous character of $z$]Since $X$ is of infinite variation, 
the main result in  \cite{MR0242261} tells us that 
$\limsup_{t\to 0} X_t/t=\infty$ and $\liminf_{t\to 0} X_t/t=-\infty$ almost surely. 
(See also \cite{2019arXiv190304745A} for a different proof.) 
We now prove that the same holds for $Z-z$. 
Let $t^+_n$ and $t^+_n$ be sequences decreasing to zero 
and such that $\lim_nX_{t^+_n}/t^+_n=\infty$ and $\lim_nX_{t^{-}_n}/t^{-}_n=-\infty$. 
(Note that the sequences depend on the path of $X$. ) 
Since $\imf{f}{z}\neq 0$ and $Z$ has \cadlag\ paths, 
we see that $C_t=\int_0^t \imf{f}{Z_s}\, ds$ is strictly increasing and continuous on a neighborhood of $t=0$. 
Using the notation $\sim$ for asymptotic equivalence  
(for two positive functions $g$ and $h$ we write $g\sim h$ if $g(t)/h(t)\to 1$ as $t\to 0$), 
we also see that $C_t\sim \imf{f}{z}t$ as $t\to 0$. 
Since $b$ is bounded and $Y$ is a subordinator, 
there exists a constant $M>0$ such that $\int_0^t\abs{\imf{b}{Z_s}}\, ds\leq Mt$ and $Y_t\leq Mt$ 
on a neighborhood of zero (use Proposition III.8 in \cite{MR1406564} for $Y$).  
We may assume that, for sufficiently large $n$,  
these inequalities hold for $t\in [0,t^+_n]$ and that there exists $s^+_n$ such that $C_{s^+_n}=t^+_n$. 
Since $\imf{f}{z}s^+_n\sim C_{s^+_n}=t^+_n$, we obtain
\[
\lim_{n\to\infty} \frac{Z_{s^+_n}-z}{s^+_n}
=\imf{f}{z}\lim_{n\to\infty} \frac{X_{t^+_n}+\int_0^{t^+_n} \imf{b}{Z_s}\, ds+Y_{t^+_n}}{t^+_n}
=\infty. 
\]Proceed similarly for $s^{-}_n$ to get $\lim_{n\to\infty} (Z_{s^{-}_n}-z)/s^{-}_n=-\infty$. 
In particular, since $Z$ has no downward jumps, 
we see that $0$ is regular for both half-lines $(-\infty,0)$ and $(0,\infty)$, as well as regular for itself. 
Hence, $0$ is regular and instantaneous. 
\item[Convergence of right endpoints of excursions] 
Consider a fixed $t>0$; recall that $Z$ is continuous at $t$ almost surely. 
Recall also that $Z$ only approaches $z$ at the endpoint of excursions away from $z$ almost surely. 
By the subsequence principle for convergence in probability (as in \cite[Thm. 20.5]{MR2893652}), 
we will assume that $Z^n\to Z$ almost surely rather than in probability, and prove that $d^n_t\to d_t$ almost surely. 
Fix an $\omega$  such that the trajectory  $Z(\omega)$  is continuous at $t$,
approaches $z$ only at the endpoints of any excursion (see Claim above) 
and $Z^n(\omega)\to Z(\omega)$ in Skorohod space.  
We use this $\omega$ throughout this paragraph but omit it for ease of notation.
We first prove that $\liminf d^n_t\geq d_t$. 
In the case $t=d_t$ this is immediate since $d^n_t\geq t=d_t$ for all $n$. 
In the case $t<d_t<\infty$, for any $\eps\in (0,d_t-t)$ such that $d_t-\eps$ is a
continuity point of $Z$, 
since
$Z$ approaches $z$ only at endpoints of excursions, we see that there
exists $\delta>0$ such that $\abs{Z_s-z}>\delta$ for all $s\in [t,d_t-\eps]$. 
%
%
Since $Z$ is continuous at $t$ and $d_t-\eps$, 
Lemma 1 in \cite[Ch 3\S16]{MR1700749} implies the existence of 
a sequence of increasing homeomorphisms $\fun{\lambda_n}{[t,d_t-\eps]}{[t,d_t-\eps]}$ such that 
\[
	\sup_{s\in[t,d_t-\eps]}\abs{Z^n_s-Z_{\lambda_n(s)}}\to 0
\]as $n\to\infty$. Hence,  $\abs{Z^n_s-z}>\delta/2$ for $s\in [t,d_t-\eps]$ and all sufficiently large $n$. 
We deduce that $d^n_t>d_t-\eps$. 
Since the continuity points of $Z$ are dense in $[0,\infty)$, 
$\eps\in (0,d_t-t)$ can be chosen to be arbitrarily small, implying
$\liminf_{n\to\infty} d^n_t\geq d_t$. 
If 
$d_t=\infty$, an analogous argument based on the application of 
Lemma 1 in \cite[Ch 3\S16]{MR1700749} on the interval $[t,M]$ (for an arbitrarily large continuity point $M$
of the process $Z$) yields 
$\liminf_{n\to\infty} d^n_t=\lim_{n\to\infty} d^n_t= \infty$. 

We now prove that $\limsup d^n_t\leq d_t$. 
Recall that $d_t$ is a stopping time and so, by regularity of both half-lines,
we see that for every $\eps>0$ there exist $d_t<t_1<t_2<d_t+\eps$ such that
$Z_{t_1}>z>Z_{t_2}$.  By the convergence in the Skorohod topology, there exist
two sequences $(t^n_1)$ and $(t^n_2)$ converging to $t_1$ and $t_2$, such that
$Z^n_{t^n_1}>z>Z^n_{t^n_2}$ for large enough $n$. 
The skip-free character of $Z^n$ now implies the existence of $r^n$ satisfying 
$d_t\leq t^n_1\leq r^n\leq t^n_2<d_t+\eps$ and such that $Z^n_{r^n}=z$ for all sufficiently large $n$. 
But then, since $t\leq r^n$ for all sufficiently large $n$, we have $d^n_t\leq d_t+\eps$. 
Since $\eps$ was arbitrary, we obtain that $\limsup_{n} d^n_t\leq d_t$. 
In conclusion, we see that $\lim_n d^n_t=d_t$. 
\item[Convergence of left endpoints of excursions] 
As in the previous item, 
we will assume that $Z^n\to Z$ almost surely rather than in probability, 
and prove that $g^n_t\to g_t$ almost surely. 
Fix $t>0$. 
If $g_t=t$ then $\limsup_{n} g^n_t\leq t=g_t$. 
If $g_t<t$, recall that almost surely, $\proba{g_t<t=d_t}=0$ (cf. proof of \cite[Lemma 1.11]{MR1746300}). 
Thus $t<d_t$ and hence, by the Claim above, 
for any $\eps\in (0,t-g_t)$ 
the process $Z$ does not approach $z$ on $[g_t+\eps, t]$.  
If $g_t+\eps$ is a continuity point of $Z$, then $Z^n$ is also bounded away from $z$  on $[g_t+\eps,t]$  
by  Lemma 1 in \cite[Ch 3\S16]{MR1700749}
(as in the convergence of right-endpoints). 
Put differently, $\limsup_n g^n_t\leq g_t+\eps$ for all small $\eps>0$ and hence, $\limsup_{n} g^n_t\leq g_t$. 

We now prove that $g_t\leq \liminf_n g^n_t$. 
As a preliminary, note that both half-lines are regular for $Z$ after each $d_q$ 
simultaneously for all rational positive $q$, almost surely.  
We first need to prove that, for any fixed $t>0$, almost surely, 
$Z$ visits both half-lines on any interval of the form $(g_t-\eps, g_t)$. 
Note that on the set $g_t<t$, $g_t$ is an accumulation point of $\mc{Z}=\{s\in[0,\infty):Z_s=z\}$ from the left, 
since $\clo{ \mc{Z}}$ is perfect. 
Next, for any fixed $t$, we have $t=g_t$ if and only if $t=g_t=d_t$ 
and then $t$ is also an accumulation point of $\mc{Z}$ from the left, 
as shown in the proof of  \cite[Lemma 1.11]{MR1746300}. 
Hence, there exists a sequence $d_{q_n}\uparrow g_t$ where $q_n$ is rational and $q_n\uparrow g_t$. 
But, immediately after each $d_{q_n}$, $Z$ visits both half-lines. 
Hence, for any $m$, we can find $g_t-1/m<t^1_m<t^2_m<g_t$ such that $Z_{t^1_m}>z$ and $Z_{t^2_m}<z$. 
But then, for any $m$, there exists $N$ such that for any  $n\geq N$ 
there exist $\tilde t^1_m$ and $\tilde t^2_m$ converging to $t^1_m$ and $t^2_m$ 
such that $Z^n_{\tilde t^1_m}>z$ and $Z^n_{\tilde t^2_m}<z$. 
By the skip-free character of $Z^n$, there exists a point in $[\tilde t^1_n,\tilde t^2_n]$ on which $Z^n$ equals $z$. 
Hence,  $g_t\leq \liminf_n g^n_t$. 
\end{description}

\begin{remark}
Note that the spectrally one-sided character of $X^n$ is important 
to ensure that to go from a value greater than $z$ to  value less than $z$ one is forced to pass through $z$; 
this allows one to use simple path-wise arguments. 
\end{remark}

\subsection{Convergence of local times of reflected random walks}
\label{SubsectionOnConvergenceOfLocalTimesForReflectedRandomWalks}

Our proof of Theorem \ref{reflectedRWTheorem} amounts to checking that 
the conditions of Theorem \ref{mainLimitTheorem} are satisfied. 
In this case, we can use the simple pathwise arguments as in the proof of Theorem 
\ref{generalMarkovianTheorem}. 

\begin{proof}[Proof of Theorem \ref{reflectedRWTheorem}]
Recall from Skorohod's Theorem that the weak convergence  $X^n_1\to X_1$ 
already entails the convergence of $X^n$ to $X$ in Skorohod space (cf. Theorem 16.14 in \cite{MR1876169}). 
We can then assume (again by a theorem of Skorohod) that convergence takes place almost surely. 
We also recall the following result from \cite[Propositions 2.2 and 2.4]{MR0433606}  
which holds under our assumption of regularity of both half-lines:
for each $t>0$, there exists $\rho_t\in (0,t)$ such that 
\[
	\set{s\in [0,t]: X_s=\underline X_s\text{ or }X_{s-}=\underline X_t}=\set{\rho_t}
\]and $X_{\rho_t}=X_{\rho_t-}=\underline X_{\rho_t}$. 
By considering the above simultaneously for all rational $t$, 
it follows that during an excursion of $R=X-\underline X$ above $0$, $R$ only approaches zero at the endpoints. 
\begin{description}
\item[Regular and instantaneous character of $0$]
Let $T_\eps=\inf\set{t\geq 0: X_t\leq -\eps}$. Note that $T_\eps>0$. Regularity of $(-\infty,0)$ tells us that $T_\eps\to 0$. Note that $X_{T_\eps}=\underline X_{T_\eps}$, even if  $X$ is discontinuous at $T_\eps$. 
Hence, $R_{T_\eps}=0$ for all $\eps>0$ and so $0$ is regular for $\set{0}$ for $R$. 
However, by regularity of $0$ for $(0,\infty)$, we see that $X_{T_\eps+\cdot}-X_{T_\eps}$ visits $(0,\infty)$ on any right neighborhood of $T_{\eps}$, which implies the same for $R$. 
Hence, $0$ is instantaneous for $R$. 
\item[Convergence of left endpoints of excursions] 
Note for any $X^n$,
\begin{lesn}
g^n_t=\sup\set{s\leq t: X^n_s=\underline X^n_{t}},
\end{lesn}the last time $X^n$ reaches its overall minimum on the interval $[0,t]$. Since, as  explained above, 
we have that $g_t=\rho_t$  and $\underline X_t=X_{\rho_t-}< X_s\wedge X_{s-}$ for any $s\neq \rho_t$, 
we see that $g_t^n\to g_t$. 
Indeed, for any $\eps>0$ such that $g_t-\eps$ and $g_t+\eps$ are continuity points of $X$, 
let $V=[0,g_t-\eps]\cup [g_t+\eps,t]$. 
Then, by the convergence $X^n\to X$ on $V$ and continuity of $X$ at $\rho_t$, 
we see that $\inf\set{X^n_s:s\in V}\to \inf\set{X_s:s\in V}>X_{\rho_t}=\lim_n X^n_{\rho_t}$. 
Therefore, the infimum of $X^n$ on $[0,t]$ is achieved in $[g_t-\eps,g_t+\eps]$ for $n$ large enough. 
Put differently, $g^n_t\in [g_t-\eps,g_t+\eps]$ for large enough $n$. 
Since $\eps$ can be chosen arbitrarily small, we deduce that $g^n_t\to g_t$. 
\item[Convergence of right endpoints of excursions] 
We first prove that $\liminf d^n_t\geq d_t$. 
In the case when $t=d_t$ this is immediate since $d^n_t\geq t=d_t$. 
In the case $t<d_t$,  for any  $\eps\in (0,d_t-t)$ such that $X$ is continuous at $d_t-\eps$, note that
\begin{lesn}
\lim_{n\to \infty}\underline X^n_t=\underline X_t<\underline X_{[t,d_t-\eps]}
=\lim_{n\to\infty } \underline X^n_{[t,d_t-\eps]}. 
\end{lesn}We deduce that $X^n$ cannot reach its minimum on $[0,t]$ 
on the interval $[t,d_t-\eps]$ for $n$ large enough, which says that $d^n_t\geq d_t-\eps$. 
Equivalently $\liminf d^n_t\geq d_t$. 
We now prove that $\limsup d^n_t\leq d_t$. 
Recall that $d_t$ is a stopping time and so, by regularity, 
we have that $\underline X_{d_t+\eps}<\underline X_{d_t}=\underline X_t$. 
If there exists $\eps>0$ such that $d^n_t>d_t+\eps$ for large $n$ then 
$\underline X^n_t=\underline X^n_{d^n_t}$, and so we get the contradiction
\begin{lesn}
\underline X_t
=\lim_{n\to\infty}\underline X^n_t
=\lim_{n\to\infty}\underline X^n_{d^n_t}\leq \lim_{n\to\infty}\underline X^n_{d_t+\eps}
= \underline X_{d_t+\eps}<\underline X_t. 
\end{lesn}We conclude  that $\limsup d^n_t\leq d_t$ and hence that $\lim d^n_t=d_t$ almost surely for any $t>0$. From the above, we get that almost surely and simultaneously for all positive rational numbers $q$, 
we have $g^n_q\to g_q$ and $d^n_q\to d_q$. 
\end{description}
We have therefore established the assumptions of Theorem \ref{mainLimitTheorem} which ends the proof of Theorem \ref{reflectedRWTheorem}. 
\end{proof}

\section{Proof of the local time invariance principle}
\label{SectionOnProofOfMainLimitTheorem}
We now present the proof of our main limit theorem stated as Theorem \ref{mainLimitTheorem}. 
The proof has 3  main components. 
First, Proposition \ref{rightAndLeftEndpointConvergenceProposition} tells us that, 
under the hypotheses of Theorem \ref{mainLimitTheorem}, 
the excursion lengths converge and that the excursion measure converges vaguely. 
This proposition explains why the scaling sequence is adequate to obtain a non-trivial limit. 
Second, Lemma \ref{largeJumpConvergenceLemma} tells us that if we erase small excursions 
(say below a certain fixed positive length), 
the resulting inverse local times converge to a subordinator (with no drift or small jumps) in probability.  
This is where the conclusion of convergence in probability is obtained for Theorem \ref{mainLimitTheorem}. 
It is based on the concept of nested arrays and the resulting  approximations of local time of \cite{MR579823}. 
Finally, the small jumps in inverse local time are controlled by a weak limit theorem 
in Lemma \ref{weakConvergenceOfInverseLocalTimeLemma}.

To prove Theorem \ref{mainLimitTheorem}, 
we first start with a result implying that conditioned excursions lengths converge. 
Recall that $\mu_n$ stands for the common law of the excursions of $\mathfrak{X}^n$ away from $x$, 
which coincides with the law of $d_n(0)-g_n(0)$  defined in Equation \eqref{definitonOfgn}. 
Similarly, $\mu$ stands for the L\'evy measure of the right-continuous inverse of the chosen local time $L$. 
\begin{proposition}
\label{rightAndLeftEndpointConvergenceProposition}
In the setting of Theorem \ref{mainLimitTheorem}, 
let $l>0$ be a continuity point of the intensity $\mu$ of excursion lengths. 
Then, the sequence of lengths of excursions of lengths greater than $l$ of $\mathfrak{X}^n$ 
converge to the corresponding ones for $\mathfrak{X}$ in probability 
and the laws $\imf{\mu_n}{\cond{\cdot}{S_{l}}}$ converge weakly to $\imf{\mu}{\cond{\cdot}{S_{l}}}$. 
Furthermore, the sequence of scaling constants $(a_n)$ satisfies 
$a_n\to\infty$ and $a_n\mu_n$ converges vaguely to $\mu$. 
\end{proposition}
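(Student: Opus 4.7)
My plan is to establish the three assertions in the order stated, using the subsequence principle for convergence in probability to reduce to an almost sure setting. After passing to a subsequence I may assume $\mathfrak{X}^n \to \mathfrak{X}$ almost surely in Skorohod space and $(g^n_t, d^n_t) \to (g_t, d_t)$ almost surely for every rational $t > 0$. Since $l$ is a continuity point of $\mu$, the Poisson structure of the excursion process of $\mathfrak{X}$ implies that almost surely no excursion of $\mathfrak{X}$ has length exactly $l$; consequently, for every rational $s > 0$ the excursion of $\mathfrak{X}$ straddling $s$ has length $\ne l$, and I restrict to a full-measure event where this holds simultaneously for all rational $s$.

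The heart of the argument is a pathwise matching of the long excursions. Fix a continuity time $T > 0$ of $\mathfrak{X}$; then the excursions of $\mathfrak{X}$ completed in $[0,T]$ of length $> l$ are finitely many intervals $(g_i, d_i)$, $i=1,\dots,N$. In each I pick a rational $t_i$, and the hypothesis delivers, for $n$ large, excursions of $\mathfrak{X}^n$ straddling $t_i$ with endpoints $(g^n_{t_i}, d^n_{t_i}) \to (g_i, d_i)$ and lengths $> l$. To rule out extra long excursions of $\mathfrak{X}^n$ in $[0,T]$, suppose along a subsequence $\mathfrak{X}^{n_k}$ had one, $(g^*_k, d^*_k)$. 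By compactness I extract a further subsequence with $(g^*_k, d^*_k) \to (g^*, d^*)$ and $d^* - g^* \ge l$; picking a rational $s \in (g^*, d^*)$ forces $(g^{n_k}_s, d^{n_k}_s) = (g^*_k, d^*_k)$ for large $k$, so by the hypothesis $(g_s, d_s) = (g^*, d^*)$ with $d_s - g_s > l$ (using the rational continuity event to exclude length $=l$). Hence $(g^*, d^*)$ equals one of the $(g_j, d_j)$. But Skorohod convergence keeps $\mathfrak{X}^n$ bounded away from $x$ on any compact subinterval strictly inside $(g_j, d_j)$, so two distinct excursions of $\mathfrak{X}^{n_k}$ of length $> l$ cannot both converge to the same interval, a contradiction. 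Letting $T$ increase, I conclude that the ordered sequence of long excursion lengths of $\mathfrak{X}^n$ converges in probability to that of $\mathfrak{X}$, which is the first claim.

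The second claim is then immediate: the first excursion of $\mathfrak{X}^n$ of length $> l$ has law $\mu_n(\,\cdot \mid S_l)$ by i.i.d.\ structure, the first excursion of $\mathfrak{X}$ of length $> l$ has law $\mu(\,\cdot \mid S_l)$ by the Poisson point process description on the local time scale (with intensity $\mu|_{S_l}$), and convergence in probability of the former to the latter passes to weak convergence of the laws. For $a_n \to \infty$, I observe that $\mu_n(S_l) = \p(d^n_0 > l) = \p(g^n_l = 0)$ since both events encode ``no visit to $x$ in $(0, l]$''; as $0$ is regular for $\set{x}$ under $\mathfrak{X}$, $g_l > 0$ almost surely, so $g^n_l \to g_l$ in probability forces $\mu_n(S_l) \to 0$, whence $a_n \to \infty$. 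For vague convergence $a_n \mu_n \to \mu$ on $(0, \infty)$, the first two claims apply equally at any continuity point $l'$ of $\mu$; combining the factorisation $a_n \mu_n(S_{l'}) = \mu(S_l)\cdot\mu_n(S_{l'})/\mu_n(S_l)$ with the weak convergences of $\mu_n(\,\cdot \mid S_l)$ and $\mu_n(\,\cdot \mid S_{l'})$ yields $a_n \mu_n(S_{l'}) \to \mu(S_{l'})$, and for a continuous compactly supported $f$ supported in $[l',\infty)$ one writes $\int f\,d(a_n\mu_n) = a_n\mu_n(S_{l'}) \int f\,d\mu_n(\,\cdot\mid S_{l'}) \to \mu(S_{l'}) \int f\,d\mu(\,\cdot\mid S_{l'}) = \int f\,d\mu$.

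The main obstacle is the ``no phantom long excursions'' step of the matching: Skorohod convergence alone would allow $\mathfrak{X}^n$ to briefly touch $x$ inside a genuine long excursion of $\mathfrak{X}$, which would destroy the bijection. It is precisely the separate hypothesis that $g^n_t \to g_t$ and $d^n_t \to d_t$ at each rational $t$ that lets one identify excursions of $\mathfrak{X}^n$ from interior rational points; continuity of $\mu$ at $l$ enters only to guarantee the clean dichotomy ``length $> l$ or length $< l$'' at every rational time.
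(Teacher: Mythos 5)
Your proof is correct, and the reduction to an a.s.\ setting via the subsequence principle, the use of the Poisson description to rule out excursions of length exactly $l$, and the deduction of weak convergence of the conditioned laws from convergence in probability of the corresponding lengths all align with the paper. Two steps, however, take genuinely different routes. For the convergence of long excursion lengths, the paper isolates the \emph{first} excursion of length $>l$ and argues by contradiction: if $d_n\not\to d$, then $d_n<d-l$ along a subsequence, a subsequential limit $(\tilde g,\tilde d)$ of $(g_n,d_n)$ is identified as a genuine excursion interval via a rational $\tilde q\in(\tilde g,\tilde d)$ and the hypothesis on $(g^n_{\tilde q},d^n_{\tilde q})$, and the continuity of $\mu$ at $l$ forces its length strictly above $l$, contradicting minimality of $d$; the claim for later excursions is then delegated to the regenerative property. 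You instead perform a global pathwise matching of \emph{all} excursions of length $>l$ in $[0,T]$, ruling out phantom long excursions of $\mathfrak{X}^n$ by the same rational-sampling device. This is slightly more work but has the advantage of giving the full-sequence statement directly, without invoking regeneration; on the other hand you invoke Skorohod convergence of $\mathfrak{X}^n$ to $\mathfrak{X}$ to separate two candidate excursions converging to the same interval, where disjointness of the two intervals together with convergence of their endpoints already yields the contradiction, so the paper's proof of this proposition in fact never uses Skorohod convergence of the processes. For $a_n\to\infty$, the paper uses $\mu(S_{\eps_i})\to\infty$ (from $\mu$ being infinite) combined with $\mu_n(S_{\eps_i})\le 1$ and the already-established asymptotics $a_n\sim\mu(S_{\eps_i})/\mu_n(S_{\eps_i})$. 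You instead observe $\mu_n(S_l)=\p(d^n_0>l)=\p(g^n_l=0)\to 0$ because $g^n_l\to g_l$ in probability and $g_l>0$ a.s.\ by regularity of $x$; this is a neat, self-contained alternative that isolates more transparently where the regularity hypothesis enters, and it does not need the vague convergence step first. Finally, a cosmetic point: the paper states vague convergence of $a_n\mu_n$ to $\mu$ on $(0,\infty]$, not only on $(0,\infty)$; your test-function argument handles $f$ compactly supported in $(0,\infty)$, but the extension to $(0,\infty]$ is immediate since you already proved $a_n\mu_n(S_{l'})\to\mu(S_{l'})$ at continuity points.
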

\begin{proof}
We first note that there exists a subsequence $n_l$ such that 
$g^{n_l}_q$ and $d^{n_l}_q$  converge to $g_q$ and $d_q$ respectively for all rational $q$ almost surely. 
For notational simplicity, we focus on $g^n_q$.
Indeed, let $q_1,q_2,\ldots$ be an enumeration of the positive rational numbers 
and choose a first subsequence $n_{1,l}$ such that $g^{n_{1,l}}_{q_1}\to g_{q_1}$ almost surely as $l\to\infty$. 
Then, once we have a subsequence $n_{k,l}$ such that $g^{n_{k,l}}_{q_i}\to g_{q_i}$  almost surely 
for $i\leq k$ as $l\to\infty$, 
choose a further subsequence $n_{k+1,l}$ of $n_{k,l}$ such that  
$g^{n_{k+1},l}_{q_{k+1}}\to g_{q_{k+1}}$ as $l\to\infty$. 
Having done this for all $k$, define $n_l=n_{l,l}$. 
Note that $n_l$ is a subsequence of $n_{k,l}$ for all $k$. 
Hence, $g^{n_l}_{q_i}\to g_{q_i}$ for all $i$ almost surely. 

Let $(g,d)$ be the first excursion interval of $\mathfrak{X}$ of length $>l$ 
(where $l$ is as in the statement of Theorem \ref{mainLimitTheorem}) 
and $(g_n,d_n)$ the corresponding one for $\mathfrak{X}^n$. 
By the regenerative property, it suffices to prove that $d_n-g_n\to d-g$ in probability 
to conclude the convergence of the sequence 
(of excursion lengths greater than $l$). 
We will do this by a subsequence argument. 
If $n_l$ strictly increase to infinity, the above paragraph guarantees 
the existence of a further subsequence $n_{l_k}$ such that $g^{n_{l_k}}_{q}\to g_q$ and $d^{n_{l_k}}_q\to d_q$ 
for all rational $q$ almost surely. 
To simplify notation, assume that this holds for $g^n_q$ and $d^n_q$. 
We now prove that $g_n\to g$ and $d_n\to d$ almost surely. 
First note that if $d_n\to d$ then $g_n\to g$. Indeed, for any $q\in (d-l/2,d)\cap\ra$, 
we have the convergences $g^n_q\to g_q=g$ and $d^n_q\to d_q=d$. 
Since $d_n\to d$, then  the inequality $g_n\leq d_n-l<d-l/2<q<d_n$ holds for large $n$, 
implying that $g_n=g^n_q$ and so  $g_n=g^n_q\to g$. 
Next, assume that $d_n\not\to d$. 
Then for any $q\in (g,d)\cap \ra$, we have that $(g^n_q,d^n_q)\to (g,d)$ so that, 
for large $n$, $d^n_q-g^n_q>l$. 
Hence $d_n<g^n_q<d-l$ for large $n$. 
This upper bound allows us to find a subsequence $n_k$ so that $(g_{n_k},d_{n_k})$ 
converges to $(\tilde g,\tilde d)$ where $\tilde d-\tilde g\geq l$ and $\tilde d<d-l$. 
Let $\tilde q\in (\tilde g,\tilde d)\cap \ra$. 
Since  for large $k$, 
$(g_{n_k},d_{n_k})=(g^{n_k}_{\tilde q},d^{n_k}_{\tilde q})$ 
and $(g^{n_k}_{\tilde q},d^{n_k}_{\tilde q})\to (g_{\tilde q},d_{\tilde q})$, 
we see that $(\tilde g,\tilde d)=(g_{\tilde q},d_{\tilde q})$ is an excursion interval of length $\geq l$; 
since there are no excursion intervals of length exactly $l$, by hypothesis, 
then  $ \tilde d-\tilde g=d_{\tilde q}-g_{\tilde q}> l$. 
Since $\tilde d<d$, this contradicts the definition of $d$, and we conclude that $d_n\to d$. 
Hence, $d_n-g_n$ converges almost surely to $d-g$. 
Recall that we had simplified notation of the subsequence. 
Our conclusion is that the original sequence $d_n-g_n$ converges in probability to $d-g$. 

Since $\imf{\mu_n}{\cond{\cdot}{S_{l}}}$ is the law of $d_n-g_n$ (and analogously for $d-g$), 
we also deduce that the laws $\imf{\mu_n}{\cond{\cdot}{S_{l}}}$ converge weakly to $\imf{\mu}{\cond{\cdot}{S_{l}}}$. 
As a consequence, we  now prove the vague convergence of $a_n\mu_n$ to $\mu$. 
Let $\eps_i\downarrow 0$ be continuity points of $\mu$ with $\eps_i\leq l$. 
Then, 
\begin{linenomath}
\begin{align*}
a_n\imf{\mu_n}{
S_{\eps_i}
}
=
\frac{\imf{\mu}{S_l}}{\imf{\mu_n}{\cond{S_{l}}{S_{\eps_i}}}}
\to \imf{\mu}{S_{\eps_i}}.
\end{align*}\end{linenomath}In particular, we see that the definition of $a_n$ does not depend on $l$ asymptotically:
\[
a_n\sim \frac{\imf{\mu}{S_{\eps_i}}}{\imf{\mu_n}{S_{\eps_i}}}. 
\]Now, let $x>0$ be such that $\imf{\mu}{\set{x}}=0$. 
If $i$ is such that $\eps_i<x$, then 
\begin{linenomath}
\begin{align}
\label{vagueConvergence}
a_n\imf{\mu_n}{S_x}
\sim 
\imf{\mu}{S_{\eps_i}}\imf{\mu_n}{\cond{S_x}{S_{\eps_i}}}
\xrightarrow[n\to\infty]{} \imf{\mu}{S_{\eps_i}}\imf{\mu}{\cond{S_x}{S_{\eps_i}}} 
=\imf{\mu}{S_x}. 
%
\end{align}\end{linenomath}Hence, $a_n\mu_n$ converges vaguely to $\mu$ on $(0,\infty]$. 

We now prove that $a_n\to\infty$ using the assumption that $\mu$ is an infinite measure. 
The latter implies that $\imf{\mu}{S_{\eps_i}}\to\infty$  as $i\to\infty$. 
Since $a_n\sim \imf{\mu}{S_{\eps_i}}/\imf{\mu_n}{S_{\eps_i}}$ and $\imf{\mu_n}{S_{\eps_i}}\leq 1$, 
we see that $\liminf a_n\geq \imf{\mu}{S_{\eps_i}}$ for any $i$. 
%
\end{proof}

For the proof of Theorem \ref{mainLimitTheorem}, 
we will also need the following lemma, 
regarding the convergence in probability of the cummulative sums of big excursion lengths. 
Within this setting, recall that inverse local time, 
denoted $\tau$, is the right-continuous inverse of $L$ defined by
\[
	\tau_l=\inf\set{t\geq 0: L_t>l}. 
\]We also define an analogous inverse local time for the discrete time approximations $\tau^n$ given by
\[
	\tau^n_l=\inf\set{t\geq 0:L^n_t>l},
\]where $L^n$ has been extended by constancy to each of the intervals $[k,k+1)/n$. 
Since $L^n$ is integer valued, it follows that $\tau^n$ only jumps at integer times. 
Its $m$-th jump size, $\Delta \tau^n_{m}$, corresponds to the length of the $m$-th excursion away from $x$ of $\mathfrak{X}^n$; 
$\tau^n$ is then easily seen to be a random walk with values on $\na/n$. 
Similarly, we can define $\tau^{n,>a}$ as the random walk whose $n$-th jump 
is $\Delta \tau^{n}_m\indi{\Delta \tau^{n}_m>a}$. 
\begin{lemma}
\label{largeJumpConvergenceLemma}
Under the assumptions of Theorem \ref{mainLimitTheorem}, 
let $\tau$ be the inverse local time of $\mathfrak{X}$, \begin{lesn}
\imf{\Delta \tau}{t}=\imf{\tau}{t}-\imf{\tau}{t-}\quad\text{and}\quad\imf{\tau^{>a}}{t}=\sum_{s\leq t} \imf{\Delta \tau}{t}\indi{\imf{\Delta \tau}{s}>a}, 
\end{lesn}and similarly for $\tau^{n,>a}$. 
If the L\'evy measure of $\tau$ does not charge $a$ and is infinite, 
then $\imf{\tau^{n,>a}}{a_n\cdot}$ converges in probability (as a random element of Skorohod space) to $\tau^{>a}$. 
\end{lemma}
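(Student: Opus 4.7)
My plan is to reduce the Skorohod-space convergence of $\tau^{n,>a}(a_n\cdot)$ to $\tau^{>a}$ to a convergence in probability of marked point processes indexed by excursions of length exceeding $a$. By a subsequencing argument combined with Skorohod's representation theorem, I would pass to a probability space on which $\mathfrak{X}^n\to\mathfrak{X}$ almost surely in Skorohod topology and $g^n_q\to g_q$, $d^n_q\to d_q$ for all positive rationals $q$. Proposition \ref{rightAndLeftEndpointConvergenceProposition} then guarantees that the excursions of $\mathfrak{X}^n$ of length $>a$ are, for $n$ large, in chronologically-ordered bijection with those of $\mathfrak{X}$, with endpoints $(g^n_k,d^n_k)\to(g_k,d_k)$ and lengths $J^n_k=d^n_k-g^n_k\to J_k$ almost surely.

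Next I would write $T^n_k:=L^n_{g^n_k}\in\na$ for the discrete local time at the start of the $k$-th big excursion of $\mathfrak{X}^n$, and $T_k:=L_{g_k}$ for the corresponding continuous local time for $\mathfrak{X}$. Then $\tau^{n,>a}(a_n l)=\sum_{k:\,T^n_k/a_n\le l}J^n_k$ and $\tau^{>a}_l=\sum_{k:\,T_k\le l}J_k$. Since $\mu(S_a)<\infty$, any compact window in local time contains almost surely only finitely many jumps of $\tau^{>a}$, so Skorohod convergence in probability reduces to the convergence in probability of the marked atoms $(T^n_k/a_n,J^n_k)\to(T_k,J_k)$ for each fixed $k$. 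The size coordinate is delivered by the previous paragraph.

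The main obstacle, and the hard part of the proof, is the convergence of the scaled discrete local-time positions $T^n_k/a_n\to T_k$ in probability. One cannot just assert this via $g^n_k\to g_k$ and continuity of $L$, as that would amount to invoking $L^n/a_n\to L$, which is the very conclusion of Theorem \ref{mainLimitTheorem}. To break this circularity I would invoke the nested-array construction of Kingman \cite{MR579823}: introducing a sequence $(b_m)$ of continuity points of $\mu$ decreasing to zero, one decomposes $T^n_k/a_n$ as a telescoping sum of normalized counts of $\mathfrak{X}^n$-excursions of length in $(b_{m+1},b_m]$ completed in $[0,g^n_k]$, plus a tail of excursions of length at most $\inf_m b_m$. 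By the vague convergence $a_n\mu_n\to\mu$ from Proposition \ref{rightAndLeftEndpointConvergenceProposition} and the bijection of excursions at each level $b_m$ (applied again via Proposition \ref{rightAndLeftEndpointConvergenceProposition}), the count at level $b_m$ for $\mathfrak{X}^n$, once scaled by $a_n$, is close to the analogous count for $\mathfrak{X}$ divided by $\mu(S_{b_m})$; letting $m\to\infty$ and using that $\mu$ is infinite produces $L_{g_k}=T_k$ as the nested-array limit. I expect the technical heart of the argument to be exactly this step, namely making the telescoping decomposition across levels $(b_m)$ uniform enough to upgrade the natural weak convergence into convergence in probability.
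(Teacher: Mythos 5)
Your road map matches the paper's at the structural level: you correctly reduce the claim to convergence in probability of the time of the $m$-th big jump, $T_{n,m}/a_n\to T_m$; you correctly flag that deducing this from $g^n_k\to g_k$ and continuity of $L$ would be circular; and you correctly identify Kingman's nested-array representation of local time as the tool to break the circularity. That much is right and is exactly where the real difficulty sits.

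The gap is in the step you yourself defer as ``the technical heart.'' The paper does not run a telescoping decomposition across a whole sequence of levels $(b_m)$ at once. It fixes one intermediate level $\eps_i$ (a continuity point of $\mu$), introduces the count $\sigma^n_{i,m,a}$ of excursions of length $>\eps_i$ occurring before the $m$-th excursion of length $>a$, and compares $T_{n,m}/a_n$ with $\sigma^n_{i,m,a}/\mu(S_{\eps_i})$ via an explicit second-moment estimate. That estimate rests on distributional identities that are specific to the discrete regenerative setting: $T_{n,m}$ is negative binomial with parameters $\mu_n(S_a)$ and $m$, and, conditionally on $T_{n,m}$, the quantity $\sigma^n_{i,m,a}-m$ is binomial with parameters $T_{n,m}-m$ and $\mu_n(S_{\eps_i}\mid S_a^c)$. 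Feeding these into the law of total variance, using the vague convergence $a_n\mu_n\to\mu$ from Proposition~\ref{rightAndLeftEndpointConvergenceProposition}, one finds that the variance of $\sigma^n_{i,m,a}/\mu(S_{\eps_i})-T_{n,m}/a_n$ converges (as $n\to\infty$) to $m/\mu(S_{\eps_i})$, which in turn vanishes as $i\to\infty$ precisely because $\mu$ is infinite; the means are handled similarly. The comparison of $\sigma_{i,m,a}/\mu(S_{\eps_i})$ with $T_m$ is then Kingman's theorem, and the remaining cross term $\sigma^n_{i,m,a}$ versus $\sigma_{i,m,a}$ is exactly what Proposition~\ref{rightAndLeftEndpointConvergenceProposition} delivers. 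Your ``uniform telescoping'' scheme does not obviously produce any of this quantitative control; without the negative-binomial/binomial structure (or a substitute concentration bound) there is no handle on how far $T_{n,m}/a_n$ sits from the count at a given level, and the proposal as written does not yield convergence in probability. So: right tools, right target, but the concrete variance computation that actually makes the nested-array approximation bite is missing.
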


\begin{proof}
Note that we have already proved that the jump sizes of $\tau^{n,>a}$ 
converge in probability to those of $\tau^{>a}$ in Proposition \ref{rightAndLeftEndpointConvergenceProposition}. 
Let $T_{n,m}$ and $T_m$ be the times of the $m$-th jumps  of $\tau^n$ and $\tau$ of size $>a$. 
To prove the convergence of $\tau^{n,>a}$ to $\tau^{>a}$ in probability, 
it is therefore sufficient to prove that, for every fixed $m$,  
\begin{equation}
	\label{EqConvInProbTnmtoTn}
	\frac{T_{n,m}}{a_n}\to T_m
\end{equation}in probability as $n\to\infty$. 

Since $x$ is regular and instantaneous, the measure $\mu$ is infinite. 
Let $\eps_i$ be a sequence of continuity points of $\mu$ converging to zero. 
Let $\sigma_{i,m,a}$ be the number of jumps of $\tau$ of length $>\eps_i$ 
before the $m$-th jump of length $>a$ (and define $\sigma^n_{i,m,a}$ analogously). 
From Theorem 2.2 in \cite{MR579823} we see that, almost surely, 
\begin{lesn}
T_m=\lim_{i\to\infty}\frac{\sigma_{i,m,a}}{\imf{\mu}{S_{\eps_i}}}. 
\end{lesn}Let $\eps>0$ and deduce that
\begin{lesn}
\lim_{i\to\infty}\proba{\abs{T_m-\frac{\sigma_{i,m,a}}{\imf{\mu}{S_{\eps_i}}}}>\eps}=0.
\end{lesn}Now, decompose as follows
\begin{linenomath}
\begin{align}
\label{probabilityInequalityForConvInProbabilityOfTimesOfBigJumps}
\proba{\abs{\frac{\sigma_{i,m,a}}{\imf{\mu}{S_{\eps_i}}} -\frac{T_{n,m}}{a_n}}>\eps}
&\leq \proba{\abs{\frac{\sigma_{i,m,a}}{\imf{\mu}{S_{\eps_i}}} -\frac{\sigma^n_{i,m,a}}{\imf{\mu}{S_{\eps_i}}}}>\eps/2}
\\&+\proba{\abs{\frac{\sigma^n_{i,m,a}}{\imf{\mu}{S_{\eps_i}}}-\frac{T_{n,m}}{a_n}}>\eps/2}
\nonumber
\end{align}The convergence in \eqref{EqConvInProbTnmtoTn} follows 
if both probabilities on the right-hand side of \eqref{probabilityInequalityForConvInProbabilityOfTimesOfBigJumps} 
 tend to $0$ as $n\to\infty$ and then $i\to\infty$. 
For the first term, note that $\sigma_{i,m,a}^n$ converges in probability to $\sigma_{i,m,a}$ 
by our hypothesis (through Proposition \ref{rightAndLeftEndpointConvergenceProposition}), 
since they are equal with probability tending to one as $n\to\infty$ for any fixed $i$. 
For the last term, by Markov's inequality, it is sufficient to prove that
\begin{equation}
\label{secondMomentGoesToZeroEquation}
\lim_{i\to\infty}\limsup_{n\to\infty} \esp{\bra{\frac{\sigma^n_{i,m,a}}{\imf{\mu}{S_{\eps_i}}}-\frac{T_{n,m}}{a_n}}^2}=0.
\end{equation}

The law of $T_{n,m}$ is negative binomial with parameters $\imf{\mu_n}{S_a}$ and $m$ . 
Since $T_m$ is the sum of $m$ independent exponentials of parameter $1/\imf{\mu}{S_a}$, 
we get
\[\esp{T_{n,m}/a_n}\to m/\imf{\mu}{S_a}=\esp{T_m}
\]by the vague convergence of $a_n\mu_n$ of Proposition \ref{rightAndLeftEndpointConvergenceProposition}. 
Note that the conditional law of $\sigma^n_{i,m,a}-m$ given $T_{n,m}$ is binomial 
of parameters $T_{n,m}-m$ and $\imf{\mu_n}{\cond{S_{\eps_i}}{ S_a^c}}$. 
As we proved in Proposition \ref{rightAndLeftEndpointConvergenceProposition}, 
$\imf{\mu_n}{S_a}\to 0$ (or equivalently $a_n\to \infty$), 
which implies that $\imf{\mu_n}{S_a^c}\to 1$. 
Then, by the tower property, 
\begin{linenomath}
\begin{align*}
\esp{\frac{\sigma^n_{i,m,a}}{\imf{\mu}{S_{\eps_i}}}}
&=
\frac{m}{\imf{\mu}{S_{\eps_i}}}
+\esp{\frac{T_{n,m}-m}{\imf{\mu}{S_{\eps_i}}}}\frac{1}{a_n}\frac{a_n\imf{\mu_n}{S_{\eps_i}\setminus S_a}}{\imf{\mu_n}{S_a^c}}
\\&\xrightarrow[n\to\infty]{} \frac{m}{\imf{\mu}{S_{\eps_i}}}+\frac{m}{\imf{\mu}{S_a}\imf{\mu}{S_{\eps_i}}}\imf{\mu}{S_{\eps_i}\setminus S_a}
\\&=\frac{m}{\imf{\mu}{S_a}}.
\end{align*}
\end{linenomath}We deduce that
\[
\esp{\frac{\sigma^n_{i,m,a}}{\imf{\mu}{S_{\eps_i}}}-\frac{T_{n,m}}{a_n}}\to 0. 
\]Hence, in order to prove \eqref{secondMomentGoesToZeroEquation}, we only need to see that
\[
\lim_{i\to\infty}\limsup_{n\to\infty}\var{\frac{\sigma^n_{i,m,a}}{\imf{\mu}{S_{\eps_i}}}-\frac{T_{n,m}}{a_n}}=0. 
\]This is now done through the law of total variance. 

First, again using the law of $\sigma^n_{i,m,a}$ given $T_{n,m}$, we get
\begin{align*}
\varc{\frac{\sigma^n_{i,m,a}}{\imf{\mu}{S_{\eps_i}}}-\frac{T_{n,m}}{a_n}}{T_{n,m}}
&=
\varc{\frac{\sigma^n_{i,m,a}}{\imf{\mu}{S_{\eps_i}}}}{T_{n,m}}
\\
&=\frac{1}{\imf{\mu}{S_{\eps_i}}^2}
     \bra{T_{n,m}-m} 
     	\frac{\imf{\mu_n}{S_{\eps_i}\setminus S_a}}{\imf{\mu_n}{S_a^c}} 
     	\paren{1-\frac{\imf{\mu_n}{S_{\eps_i}\setminus S_a}}{\imf{\mu_n}{S_a^c}}},
\end{align*}so that, as $n\to\infty$, 
as $n\to\infty$, the vague convergence $a_n\mu_n\to \mu$ of Proposition \ref{rightAndLeftEndpointConvergenceProposition} 
implies that\begin{align}
\label{convergenceOfConditionalVariance}
\esp{\varc{\frac{\sigma^n_{i,m,a}}{\imf{\mu}{S_{\eps_i}}}
}{T_{n,m}}}
&=\frac{1}{\imf{\mu}{S_{\eps_i}}^2}\frac{m}{\imf{\mu_n}{S_a}}
      \frac{\imf{\mu_n}{S_{\eps_i}\setminus S_a}}{\imf{\mu_n}{S_a^c}} 
     	\paren{1-\frac{\imf{\mu_n}{S_{\eps_i}\setminus S_a}}{\imf{\mu_n}{S_a^c}}}
	\nonumber
\\&\xrightarrow[n\to\infty]{} \frac{m}{\imf{\mu}{S_{\eps_i}}^2}\bra{\frac{\imf{\mu}{S_{\eps_i}}}{\imf{\mu}{S_a}}-1}. 
\end{align}

On the other hand,
\begin{align*}
\espc{\frac{\sigma^n_{i,m,a}}{\imf{\mu}{S_{\eps_i}}}-\frac{T_{n,m}}{a_n}}{T_{n,m}}
&=\frac{1}{\imf{\mu}{S_{\eps_i}}}\bra{m+\paren{T_{n,m}-m}\frac{\imf{\mu_n}{S_{\eps_i}\setminus S_a}}{\imf{\mu_n}{S_a^c}}}-\frac{1}{a_n}T_{n,m}
\\&=\frac{T_{n,m}}{a_n}\bra{\frac{a_n\imf{\mu_n}{S_{\eps_i}\setminus S_a}}{\imf{\mu}{S_{\eps_i}}\imf{\mu_n}{S_a^c}}-1}+\beta
\end{align*}for some constant $\beta\in\re$. 
Hence, as $n\to\infty$, the vague convergence $a_n\mu_n\to \mu$ of Proposition \ref{rightAndLeftEndpointConvergenceProposition} gives
\begin{align}
\label{convergenceOfVarianceOfConditionalExpectation}
\var{\espc{\frac{\sigma^n_{i,m,a}}{\imf{\mu}{S_{\eps_i}}}-\frac{T_{n,m}}{a_n}}{T_{n,m}}}
&=\frac{1}{a_n^2}\bra{\frac{a_n\imf{\mu_n}{S_{\eps_i}\setminus S_a}}{\imf{\mu}{S_{\eps_i}}\imf{\mu_n}{S_a^c}}-1}^2\frac{m(1-\imf{\mu_n}{S_a})}{\imf{\mu_n}{S_a}^2}\nonumber
\\&\xrightarrow[n\to\infty]{} \frac{1}{\imf{\mu}{S_a}^2}\bra{\frac{\imf{\mu}{S_a}}{\imf{\mu}{S_{\eps_i}}}}^2m=\frac{m}{\imf{\mu}{S_{\eps_i}}^2}. 
\end{align}
\end{linenomath}By the convergences in \eqref{convergenceOfConditionalVariance} and \eqref{convergenceOfVarianceOfConditionalExpectation}, we see that\[
	\lim_{n\to\infty}\var{\frac{\sigma^n_{i,m,a}}{\imf{\mu}{S_{\eps_i}}}-\frac{T_{n,m}}{a_n}}
	= \frac{m}{\imf{\mu}{S_{\eps_i}}}. 
\]Since $\imf{\mu}{S_{\eps_i}}\to 0$ as $i\to\infty$, we have proved \eqref{secondMomentGoesToZeroEquation}. 
\end{proof}

We now need the convergence, this time only in law, of inverse local time. 
\begin{lemma}
\label{weakConvergenceOfInverseLocalTimeLemma} 
Under the assumptions of Theorem \ref{mainLimitTheorem}, let $\tau$ be the inverse local time of $\mathfrak{X}$ and similarly for $\tau^n$. 
Then $\tau^n(a_n\cdot )$ converges weakly to $\tau$. 
\end{lemma}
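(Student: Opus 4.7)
The plan is to reduce the weak convergence to a Billingsley-style approximation argument using the big-jump/small-jump decomposition of the inverse local times. Fix $T > 0$ and a continuity point $a > 0$ of $\mu$; since $T$ is arbitrary, it suffices to establish weak convergence in $\sko([0, T])$. Decompose $\tau^n(a_n \cdot) = \tau^{n, >a}(a_n \cdot) + \tau^{n, \leq a}(a_n \cdot)$ and $\tau = \tau^{>a} + \tau^{\leq a}$. Since these processes are non-decreasing and non-negative, the Skorohod distance between any one of them and its big-jump part on $[0,T]$ is bounded by the value of the corresponding small-jump part at $T$:
\[
d_{\mathrm{Sko}}^{[0,T]}(\tau^n(a_n \cdot), \tau^{n, >a}(a_n \cdot)) \leq \tau^{n, \leq a}(a_n T),\qquad d_{\mathrm{Sko}}^{[0,T]}(\tau, \tau^{>a}) \leq \tau^{\leq a}_T.
\]

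The proof then rests on three ingredients: (i) $\tau^{n, >a}(a_n \cdot) \to \tau^{>a}$ in probability (hence weakly), which is Lemma \ref{largeJumpConvergenceLemma}; (ii) $\tau^{\leq a}_T \to 0$ almost surely as $a \to 0$, which follows from the summability $\int_0^1 x\, \mu(dx) < \infty$ valid for any subordinator L\'evy measure, combined with the fact that $\tau$ has zero drift (as $x$ is instantaneous); and (iii) for every $\eta > 0$,
\[
\lim_{a \to 0} \limsup_{n \to \infty} \mathbb{P}\!\left(\tau^{n, \leq a}(a_n T) > \eta\right) = 0.
\]
Given these, Billingsley's approximation theorem (Theorem~3.2 of \emph{Convergence of Probability Measures}) applied with $X_n = \tau^n(a_n \cdot)$, $X_{n,a} = \tau^{n, >a}(a_n \cdot)$, $X_a = \tau^{>a}$, $X = \tau$ delivers $\tau^n(a_n \cdot) \to \tau$ weakly in $\sko([0, T])$, hence in $\sko([0, \infty))$.

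The main obstacle is ingredient (iii), the uniform-in-$n$ negligibility of the small jumps. By Markov's inequality applied to
\[
\mathbb{E}\!\left[\tau^{n, \leq a}(a_n T)\right] = \lfloor a_n T \rfloor \int_0^a x\, \mu_n(dx),
\]
it suffices to show $a_n \int_0^a x\, \mu_n(dx) \to \int_0^a x\, \mu(dx)$ as $n \to \infty$, with the right-hand side vanishing as $a \to 0$. This truncated-first-moment convergence does not follow directly from the vague convergence $a_n \mu_n \to \mu$ of Proposition \ref{rightAndLeftEndpointConvergenceProposition} because the test function $x\, \mathbf{1}_{(0,a]}(x)$ fails to vanish at zero. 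The natural resolution is to split the integral at a further continuity point $\delta \in (0, a)$: vague convergence directly gives $\int_\delta^a x\, a_n \mu_n(dx) \to \int_\delta^a x\, \mu(dx)$ on the compact interval, while the piece near zero requires a uniform-in-$n$ bound $\limsup_n a_n \int_0^\delta x\, \mu_n(dx) \to 0$ as $\delta \to 0$. This last bound can be secured by bootstrapping from Lemma \ref{largeJumpConvergenceLemma} (upgrading its in-probability convergence to convergence of the truncated first moments $a_n \int_a^\infty x\, \mu_n(dx) \to \int_a^\infty x\, \mu(dx)$) together with a direct computation exploiting the regenerative structure of the $\mathfrak{X}^n$ underpinning $\mu_n$.
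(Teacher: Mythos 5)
Your approach --- a big-jump/small-jump decomposition together with Billingsley's approximation theorem --- is genuinely different from the paper's, and, as you yourself flag, its crux is ingredient (iii): the uniform-in-$n$ negligibility of the small-jump contribution $\tau^{n,\leq a}(a_n T)$. This is where the argument has a real gap. Vague convergence $a_n\mu_n\to\mu$ on $(0,\infty]$ gives no control on $a_n\int_0^\delta x\,\mu_n(dx)$ as $\delta\downarrow 0$, and Lemma~\ref{largeJumpConvergenceLemma} concerns only the truncated processes above level $a$, so ``bootstrapping'' from it cannot close the gap: even if you upgraded its in-probability convergence to convergence of the truncated first moments $a_n\int_a^\infty x\,\mu_n(dx)$, you would learn nothing about the integral over $(0,\delta)$. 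More fundamentally, in the classical theory of triangular arrays converging to infinitely divisible limits, convergence of the L\'evy measures alone is not sufficient --- the small-jump/drift contribution is an independent parameter in the L\'evy--Khintchine representation. One could, for instance, modify each $\mathfrak{X}^n$ by inserting extra holding time at $x$, inflating the mass of $\mu_n$ near zero while leaving $a_n\mu_n$ on any $(\eps,\infty)$ essentially unchanged; Lemma~\ref{largeJumpConvergenceLemma} and Proposition~\ref{rightAndLeftEndpointConvergenceProposition} would survive, yet the limit of $\tau^n(a_n\cdot)$ would acquire an extra drift. The hypothesis that rules this out is precisely $g^n_t\to g_t$ in probability, and your proposal never brings it to bear where it is actually needed.

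The paper's proof handles the small-jump control by a detour that sidesteps it. It first compactifies: by Prohorov's theorem on $[0,\infty]$ one extracts a subsequence along which $\tau^{n_i}_{a_{n_i}}$ converges weakly in the extended half-line; the random-walk structure of $\tau^n$ and a short argument (tightness via Aldous's criterion under the bounded metric $d(x,y)=\abs{e^{-x}-e^{-y}}$) identify the subsequential limit of $\tau^{n_i}_{a_{n_i}\cdot}$ as a subordinator $\tilde\tau$ with some Laplace exponent $\tilde\Phi$. Then comes the decisive step: the hypothesis $g^n_t\to g_t$ enters through the Laplace-transform identities \eqref{leftEndpointSubLTEq} and \eqref{LaplaceTransformLeftEndpointOfExcursionDiscreteTime1}. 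Convergence of $\esp{e^{-\lambda g^{n_i}_T}}$ to $\esp{e^{-\lambda g_T}}$ forces $\tilde\Phi=c\Phi$ for some $c>0$, and the vague convergence $a_n\mu_n\to\mu$ then pins down $c=1$ by looking at the first jump of size exceeding a continuity point $a$ of $\mu$. This identification is exactly what encodes the missing small-jump/drift information, since $\Phi$ --- not merely $\mu$ --- carries both. Without a substitute for this step, your decomposition argument cannot be completed as sketched; with it, you would essentially be reproducing the paper's proof in a different order.
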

The proof will need basic aspects of subordinators found in Chapter 1 of \cite{MR1746300}. 
In particular, if $T$ is an exponential random variable of parameter $\theta$ independent of $\tau$ and $\Phi$ is the Laplace exponent of $\tau$, we will use the following formula which links the Laplace transform of $g_T$ and $d_T$ to $\Phi$:
\begin{linenomath}
\begin{equation}
\label{leftEndpointSubLTEq}
\esp{e^{-\alpha g_T-\beta d_T}}=\frac{\imf{\Phi}{\beta+\theta}-\imf{\Phi}{\beta}}{\imf{\Phi}{\alpha+\beta+\theta}}
. 
\end{equation}\end{linenomath}This formula is very similar to its discrete counterpart: 
\begin{linenomath}
\begin{equation}
\label{LaplaceTransformLeftEndpointOfExcursionDiscreteTime1}
\esp{e^{-\alpha g^n_T-\beta d^n_T}}
=\frac{\esp{e^{-\beta\tau^n_1}}-\esp{e^{-[\beta+\theta]\tau^n_1}}}{1-\esp{e^{-[\alpha+\beta+\theta]\tau^n_1}}}
\end{equation}
\end{linenomath}
The proof of \eqref{LaplaceTransformLeftEndpointOfExcursionDiscreteTime1} 
follows the same line of argument as that of \eqref{leftEndpointSubLTEq} found in \cite{MR1746300}, 
except that it is technically much simpler as it needs no discussion of resolvent densities. 
We now present it. 
Note that we have formula
\begin{lesn}
g^n_t=\max\set{\tau^n_k: \tau^n_k\leq t}. 
\end{lesn}Let $u_n$ be the discrete renewal density of the inverse local time $\tau^n$ given explicitly by\begin{lesn}
\imf{u_n}{k/n}=\proba{\mathfrak{X}^n_{k/n}=x}
\end{lesn}and let $U_n$ be the associated renewal function given by $\imf{U_n}{k/n}=\imf{u_n}{1/n}+\cdots +\imf{u_n}{k/n}$. 
Let $f$ be the generating function of $\tau^n_1$, so that$\imf{f}{t}=\esp{t^{\tau^n_1}}$, and 
define $T_n=\ceil{T n}/n$. 
Note that $T_n$ is a geometric random variable (on $\z_+/n$) with success probability $1-e^{-\theta/n}$. 
As defined,  $g^n_T=g^n_{T_n}$ equals the left endpoint of the excursion of $\mathfrak{X}^n$ straddling the random time $T_n$ (or $T$ for that matter). 
Note first that
\begin{linenomath}
\begin{align*}
\proba{g^n_{k/n}=i/n, d^n_{k/n}=j/n}
=\sum_{l=0}^\infty\proba{\tau^n_l=i/n, \tau^n_{ l+1}=j/n}
=u_n(i/n)\proba{ \tau^n_1=(j-i)/n}
\end{align*}\end{linenomath}for $i\leq k<j$. 
Hence
\begin{linenomath}
\begin{align*}
\esp{e^{-\alpha g^n_{T}-\beta d^n_T}}
&=\sum_{0\leq i\leq k<j} e^{-\alpha i/n-\beta j/n} u_n(i/n) \proba{\tau^n_1=(j-i)/n} e^{-\theta k/n}(1-e^{-\theta /n})
\\&=\sum_{i\geq 0} e^{-[\alpha+\beta+\theta] i/n} u_n(i/n)  \esp{e^{-\beta \tau^n_1}[1-e^{-\theta \tau^n_1}]}
\end{align*}\end{linenomath}On the other hand, 
\begin{linenomath}
\begin{align*}
\sum_{i\geq 0} e^{-\alpha i/n} u_n(i/n)
=\sum_i\sum_l e^{-\alpha i/n} \proba{\tau^n_l=i/n}
=\sum_l \esp{e^{-\alpha \tau^n_l}}
=\frac{1}{1-\esp{e^{-\alpha \tau^n_1}}}
\end{align*}\end{linenomath}Collecting terms, we get
\begin{linenomath}
\begin{align*}
\esp{e^{-\alpha g^n_{T}-\beta d^n_T}}=
\frac{\esp{e^{-\beta \tau^n_1}}-\esp{e^{-[\beta+\theta] \tau^n_1}]}}{1-\esp{e^{-[\alpha+\beta+\theta]\tau^n_1}}} , 
\end{align*}
\end{linenomath}which proves \eqref{LaplaceTransformLeftEndpointOfExcursionDiscreteTime1}. 

\begin{proof}[Proof of Lemma \ref{weakConvergenceOfInverseLocalTimeLemma}]
By hypothesis\begin{lesn}
\esp{e^{-\lambda g^n_t}}\to \esp{e^{-\lambda g_t}}
\end{lesn}as $n\to\infty$ for every $\lambda>0$. Then
\begin{linenomath}
\begin{align*}
\abs{\esp{e^{-\lambda g^n_{T_n}}}-\esp{e^{-\lambda g^n_{T}}}}
\leq \int_0^\infty \abs{\esp{e^{-\lambda g^n_t}}-\esp{e^{-\lambda g_t}}} \theta e^{-\theta t}\, dt
\to 0
\end{align*}\end{linenomath}by the dominated convergence theorem since the Laplace transforms are convergent for each fixed $t$ and uniformly bounded. 
Hence, the Laplace transform of $g^n_T$ converges pointwise to that of $g_T$ as $n\to\infty$. 

By Prohorov's theorem, 
there exists a subsequence $n_i$ such that $(\tau^{n_i}_{a_{n_i}},i\geq 1)$ converges weakly on $[0,\infty]$, 
say to a variable $\tilde \tau_1$, 
so that $\se(e^{-\lambda \tau^{n_i}_{a_{n_i}} })\to \se(e^{-\lambda\tilde\tau_1})$. 
We assert that $\tilde\tau_1$ is not almost surely equal to $\infty$. 
Indeed, assuming otherwise, since $\tau^{n_i,>a}_{a_{n_i}}$ converges weakly to a random variable which is finite with positive probability, by Lemma \ref{largeJumpConvergenceLemma}, 
we see that $\tau^{n_i,\leq a}_{a_{n_i}t}$ would have to converge to $\infty$ almost surely for all $t>0$. 
Hence, $\tau^{n_i}_{a_{n_i}\cdot}$ crosses $t$ by a jump of size $\leq a$ with probability tending to one for all $a>0$. 
We conclude that $\sip(g^{n_i}_t\in [t-a,t])\to 1$ as $i\to\infty$, so that $g^{n_i}_t\to t$ weakly. 
This contradicts the fact that $g^{n_i}_t\to g_t$ since, by our assumptions, $g_t\neq t$ has positive probability. 
Indeed, by Propositions 1 and 2 in \cite[Ch. III]{MR1406564}, we see that $\sip(g_t<t)\geq \imf{U}{t}\imf{\overline\pi}{t}$ where $U$ is the renewal measure associated to $\tau$, which satisfies $\imf{U}{t}>0$ for all $t>0$. We then need to choose $t$ such that $\imf{\overline\pi}{t}>0$, which is possible since $\tau$ is not a drift. 
Since $\tau^n$ is a random walk, we see that
\begin{lesn}
\esp{e^{-\lambda \tau^{n_i}_{ta_{n_i}} }}
=\esp{e^{-\lambda \tau^{n_i}_{a_{n_i}} }}^{\floor{a_nt}/a_n}
\to \esp{e^{-\lambda\tilde\tau_1}}^t. 
\end{lesn}Hence, the law of $\tilde\tau_1$ is infinitely divisible on $[0,\infty]$. Let $\tilde\Phi$ be its Laplace exponent and let $\tilde \tau$ be a subordinator with this exponent; recall that $\tilde\Phi$ is not identically zero since $\tilde\tau_1$ is not almost surely infinite. 
Also, we deduce the convergence of one-dimensional distributions of $(\tau^{n_i}_{a_{n_i}\cdot})$ to $\tilde  \tau$. 
As in the random walk case, the fdd convergence now follows. 
We assert that $\tau^{n_i}_{a_{n_i}\cdot}$ converges to $\tilde \tau$ on the Skorohod space of functions with values on $[0,\infty]$, 
where we have placed the metric $d(x,y)=\abs{e^{-x}-e^{-y}}$. 
For this, it is now only necessary to prove tightness which can be done simply through 
 Aldous's criterion. 
 Indeed, let $S_i$ be any stopping time for $\tau^{n_i}$ by some constant $a$ and let $h_i\to 0$. 
 We now get\begin{linenomath}\begin{align*}
 \esp{d(\tau^{n_i}_{a_{n_i}[S_i+h_i]},\tau^{n_i}_{a_{n_i}S_i})}
& =\esp{\abs{e^{-\tau^{n_i}_{a_{n_i}[S_i+h_i]}}-e^{-\tau^{n_i}{ a_{n_i}S_i}}}}
\\& =\esp{\abs{e^{-\tau^{n_i}_{a_{n_i}[S_i+h_i]} }-e^{-\tau^{n_i}_{a_{n_i}S_i}}}\indi{\tau^{n_i}_{S_i}<\infty}}
\\& \leq \esp{1-e^{-\lambda \tau^{n_i}_{a_n}}}^{h_n}\to 0,
 \end{align*}\end{linenomath}where the convergence follows since $\esp{e^{-\lambda \tau^{n_i}_{a_n}}}>0$. 

Let $\tilde g_T=\sup\set{\tilde\tau_t: \tilde\tau_t\leq T}$.  
We now assert that 
$g^{n_i}_T$ converges weakly to  $\tilde g_T$. 
First, we prove that $\tau^{n_i}_1\to 0$ in probability. 
Indeed, otherwise there would exist $\eps>0$ such that $\liminf \proba{\tau^{n_i}_1\geq \eps}\geq \eps$. 
But then $\proba{g^{n_i}_\eps=0}\geq \eps$ and by our assumed weak convergence, 
$\proba{g_\eps=0}\geq \eps$. 
On the other hand, since $a$ is regular for $\mathfrak{X}$, we see that $\proba{g_\eps=0}=0$. 
Having now proved that $\tau^{n_i}_1\to 0$ in probability, we obtain
\begin{lesn}
\esp{e^{-\lambda \tau^{n_i}_1}}\to1\quad \text{and}\quad -\log \esp{e^{-\lambda \tau^{n_i}_1}}\sim 1-\esp{e^{-\lambda\tau^{n_i}_1}}
\end{lesn}as $i\to\infty$. 
Equations \eqref{leftEndpointSubLTEq}, \eqref{LaplaceTransformLeftEndpointOfExcursionDiscreteTime1} and the convergence of $\tau^{n_i}$ to $\tilde \tau$ now give:
\begin{lesn}
\esp{e^{-\lambda g^{n_i}_T}}=
\frac{1-\esp{e^{-\theta \tau^{n_i}_1}}}{1-\esp{e^{-[\lambda+\theta]\tau^{n_i}_1}}}
\sim \frac{a_{n_i}\log \esp{e^{-\theta \tau^{n_i}_1}}  }{a_{n_i}\log \esp{e^{-[\lambda+\theta ]\tau^{n_i}_1}}}
\to \frac{\imf{\tilde \Phi}{\lambda}}{\imf{\tilde\Phi}{\lambda+\theta}}
=\esp{e^{-\lambda \tilde g_T}}. 
\end{lesn}We deduce that $\tilde g_T$ and $g_T$ have the same law, so that $c\Phi=\tilde \Phi$. 
Hence, the L\'evy measure of $\tilde \tau$ is $c\mu$. 
Let $a$ be a continuity point of $\mu$.  
Let $\chi$ be the first jump of $\tilde \tau$ strictly exceeding $a>0$ and let $\chi_i$ be 
the corresponding first jump for $\tau^{n_i}$. 
Then $\chi_i$ converges weakly to $\chi$, since the mapping sending a \cadlag\ function $f$ to its first jump of size greater than $a$ is continuous on functions having no jumps of size exactly $a$. 
However, note that $\chi_i$ has the law of a geometric with success parameter 
$\overline \mu_n(a)=\imf{\mu_n}{(a,\infty]}$. 
Hence, $\chi_i/a_{n_i}$ converges in law to an exponential random variable with parameter $\mu(a,\infty]$. 
On the other hand, the law of $\chi$ is exponential with parameter $\imf{\tilde\mu}{(a,\infty]}$, which implies $c=1$. 
Hence, every subsequential limit of $\tau^{n}(a_n\cdot)$ has the same law, so that $\tau^n$ converges to $\tau$ weakly. 
\end{proof}

\begin{proof}[Proof of Theorem \ref{mainLimitTheorem}]
First, note that it is sufficient to prove that $L^n/a_n$ converges to $L$ uniformly on compact sets in probability. 
(Recall that Skorohod convergence is equivalent to uniform convergence on compact sets for continuous functions. )
Then, since inverse local time $\tau$ is strictly increasing
 (because of the regularity assumption), 
 and the inverse is continuous operation on Skorohod space at strictly increasing functions (Corollary 13.6.4 in \cite{MR1876437}), 
 it suffices to prove that $\tau^n_{a_n\cdot}\to \tau$ 
 as random elements on Skorohod space in probability. 
 
By Lemmas  \ref{largeJumpConvergenceLemma} and \ref{weakConvergenceOfInverseLocalTimeLemma},  
$\tau^{n,\leq a}=\tau^n-\tau^{n,>a}$ converges weakly on Skorohod space to 
$\tau^{\leq a}=\tau-\tau^{>a}$. 
Let $d$ be the drift of $\tau$. 
Then $\tau^{n,\leq a}-d\id \to \tau^{\leq a}-d\id$ weakly on Skorohod space. 
Since, $\tau-d\id$ is a driftless subordinator, it equals the sum of its jumps. 
Hence,  $\proba{\imf{\tau^{\leq a}}{t}-dt>\eps}\to 0$ as $a\to 0$. 
Finally, if $\rho_t$ denotes the Skorohod metric on $[0,t]$, then (by the triangular inequality, Lemma \ref{largeJumpConvergenceLemma} and the definition of $\rho_t$):
\begin{linenomath}
\begin{align*}
&\limsup_{n\to\infty}\proba{\imf{\rho_t}{\tau, \tau^{n}_{a_n\cdot}}>\eps}
\\&\leq \limsup_n\bra{\proba{\imf{\rho_t}{\tau^{>a}, \tau^{n,>a}_{a_n\cdot}}>\eps/2}
+\proba{\imf{\rho_t}{\tau^{\leq a}, \imf{\tau^{n,\leq a}}{a_n\cdot}}>\eps/2}}
\\ &\leq \limsup_n \proba{\sup_{s\leq t}\abs{\tau^{n,\leq a}_{a_ns}-ds}>\eps/4}
+\proba{\tau^{\leq a}_{t}-dt>\eps/4}
\\ &\leq 2\proba{\imf{\tau^{\leq a}}{t}-dt>\eps/4}. 
\end{align*}\end{linenomath}As we just remarked, 
the right-most expression can be made as small as required by choosing $a$ small enough. 
\end{proof}

\section{Convergence of local times at zero of random walks and Galton-Watson processes with immigration}
\label{SectionOnApplicationsViaDistributionalMethods}
In this section, we present two further applications of Theorem \ref{mainLimitTheorem}, 
stated as Theorems \ref{randomWalkLocalTimeLimitTheorem} and \ref{TheoremOnConvergenceOfLocalTimesOfGWIProcesses}. 
The difference is that now the simple pathwise methods presented in Section \ref{SectionOnPathwiseApplicationsOfMainTheorem} are not applicable 
and one needs a more elaborate argument to see that the conditions of Theorem \ref{mainLimitTheorem} hold. 
The strategy is similar in both cases, 
mainly relying on the weak convergence of the hitting times of the regenerative point zero, 
which allows us to use the Markov property 
in order to deduce convergence in probability of right endpoints of excursions in an adequate probability space. 
To deduce convergence in probability of left-endpoints, 
we rely on the use of Markovian bridges and their reversibility properties. 

\subsection{Convergence of local times at zero of random walks in the domain of attraction of a stable process}
\label{SubsectinOnConvergenceOfLocalTimesAtZeroOfRandomWalks}

For the proof of Theorem \ref{randomWalkLocalTimeLimitTheorem} 
the strategy 
is to first analyze the weak convergence of hitting times of random walks 
and then use this information 
to prove that left and right hand endpoints of excursions straddling a fixed time 
converge in probability on an adequate probability space. 
The analysis of the hitting times is based on local limit theorems for random walks 
(as in \cite{MR0062975}) 
and on relationships between hitting times and resolvents.

\begin{lemma}
\label{randomWalkHittingTimeLimitLemma}
Under the assumptions of Theorem \ref{randomWalkLocalTimeLimitTheorem} and for any $x\neq 0$,  
let $H_0(\floor{b_nx}+X^1)$ be the hitting time of $0$ of $\floor{b_nx}+X^1$. 
Then $H_0(\floor{b_nx}+X^1)/n$ converges weakly to the hitting time $H_0$ of $0$ of $x+X$. 
This result also holds if $\floor{b_nx}+X^1$ and $x+X$ are conditioned to pass through zero 
at times $\floor{tn}$ and $t$ respectively. 
\end{lemma}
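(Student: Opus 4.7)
The plan is to establish weak convergence by verifying pointwise convergence of the Laplace transforms of $H_0(\floor{b_nx}+X^1)/n$ to that of $H_0(x+X)$, using the representation of hitting-time Laplace transforms as ratios of resolvents. Since $\alpha\in(1,2]$ makes $0$ regular for itself both for the random walk (recurrent in $1$D by Chung--Fuchs) and for $X$, the strong Markov property applied at $H_0$ yields, for any $\mu>0$,
\[
\esp{e^{-\mu H_0(y+X^1)/n}}=\frac{G_n^\mu(y,0)}{G_n^\mu(0,0)}
\quad\text{and}\quad
\esp{e^{-\mu H_0(x+X)}}=\frac{\imf{u^\mu}{x,0}}{\imf{u^\mu}{0,0}},
\]
where $G_n^\mu(y,0)=\sum_{k\geq 0}e^{-\mu k/n}\proba{X^1_k=-y}$ and $u^\mu(\cdot,0)$ is the $\mu$-resolvent density of $X$ at $0$, finite because $\int_0^1 \imf{p_t}{0}\,dt<\infty$ when $\alpha>1$.

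The first step is to show that, after the natural normalization $b_n/n$, both numerator and denominator above converge to their continuous counterparts. This is where the local limit theorem of Gnedenko--Stone enters: $b_k\proba{X^1_k=j}\to\imf{p_1}{j/b_k}$ uniformly in $j\in\z$, where $\imf{p_t}{\cdot}$ is the density of $X_t$. Substituting $k=\floor{sn}$, $j=-\floor{b_nx}$ and using the regular variation $b_{\floor{sn}}/b_n\to s^{1/\alpha}$ together with the self-similarity $\imf{p_s}{y}=s^{-1/\alpha}\imf{p_1}{s^{-1/\alpha}y}$, a Riemann-sum argument gives $(b_n/n)G_n^\mu(\floor{b_nx},0)\to\int_0^\infty e^{-\mu s}\imf{p_s}{-x}\,ds=\imf{u^\mu}{x,0}$, and similarly for $x=0$. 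The prefactor cancels in the ratio, so L\'evy's continuity theorem concludes the unconditional convergence.

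For the bridge version, the standard Markov factorization gives
\[
\proba{H_0=k\mid X^1_{\floor{tn}}=-\floor{b_nx}}
=\frac{\proba{H_0=k}\proba{X^1_{\floor{tn}-k}=0}}{\proba{X^1_{\floor{tn}}=-\floor{b_nx}}},
\]
together with the analogous identity for the continuous bridge using $\imf{p_{t-s}}{0}$ and $\imf{p_t}{-x}$. After computing the corresponding Laplace transform, one further application of the local limit theorem to $\proba{X^1_{\floor{tn}-k}=0}$, combined with the convergence of the one-dimensional normalization $b_n\proba{X^1_{\floor{tn}}=-\floor{b_nx}}\to\imf{p_t}{-x}$, reduces the problem to the unconditional case handled in the previous step.

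The main obstacle will be the behaviour near $s=t$ in the bridge case, where $\imf{p_{t-s}}{0}$ blows up like $(t-s)^{-1/\alpha}$ and the Gnedenko approximation loses its uniformity. I would handle this with a quantitative local limit theorem of Stone type (cf.\ \cite{MR0062975}) together with the known power-law tail of the law of $H_0(x+X)$ (explicit when $\alpha=2$ and following from scaling in general), so that the mass of $H_0/n$ on a window $(t-\eps,t]$ can be made arbitrarily small uniformly in $n$. The remaining step, convergence of the conditional Laplace transform over the compact range $k/n\in[\delta,t-\eps]$, then follows from the unconditional part by dominated convergence.
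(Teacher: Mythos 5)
Your overall route is the same as the paper's: express the Laplace transform of the hitting time as a ratio of resolvents (discrete and continuous), prove convergence of the resolvents via the Gnedenko--Stone local limit theorem together with regular variation of $(b_n)$, and then pass to bridges through the Markov factorization and a further application of the local limit theorem, followed by a limit interchange. Two points deserve attention. First, the ``Riemann-sum argument'' you invoke for $(b_n/n)\,G_n^\mu(\floor{b_nx},0)\to u^\mu(x,0)$ is correct in spirit but needs a dominating function to control the sum near $s=0$ and $s\to\infty$; this is precisely what the paper supplies via Potter's bounds for the regularly varying sequence $b_n/b_{\floor{sn}}$, and your write-up should make that domination explicit rather than leave it implicit in the Riemann-sum framing.

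Second, and more substantively: for the bridge version your diagnosis of the obstacle (the blow-up of $p_{t-s}(0)\sim c(t-s)^{-1/\alpha}$ as $s\uparrow t$) is right, but the ingredient you propose to close it — the tail behaviour of the law of $H_0(x+X)$ — is not the natural one and would require additional density and uniformity estimates to yield a bound uniform in $n$. What makes the argument close cleanly is that, for \emph{each} $n$, the bridge density $s\mapsto p^n_{t-s}(0,0)/p^n_t(x_n,0)$ integrates to exactly $1$ against the unconditional law of $H^n_0$ (and likewise in the limit). This is what the paper uses via Scheff\'e's lemma: once you have convergence of the integrals over $[0,t-\eps]$ from the local limit theorem on a compact range, the total-mass-one identity forces the contribution from $(t-\eps,t]$ to be uniformly small in $n$, with no separate tail estimate for $H_0$ required. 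Replace the tail-estimate step with this normalization argument and the proof goes through.
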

The conditioning of  $X$  can be formalized through the notion of a Markovian bridge, 
which is a weakly continuous disintegration of the law of $x+X$ given $X_t=y$. 
See Theorem 1 in \cite{MR2789508}; 
the construction of bridges will be used in the proof of Lemma \ref{randomWalkHittingTimeLimitLemma}. 
We will need some preliminaries for the proof, 
including the fact $0$ is regular  for our stable L\'evy process $X$ 
($0$ is also instantaneous since $X$ is not a compound Poisson process). 
We recall one possible argument since it will be relevant to our analysis. 
Start with the formula $\abs{\esp{e^{iu X_t}}}=e^{-cu^\alpha}$ valid for some $c>0$. 
Using Fourier inversion, we then see that $X_t$ admits a bounded and infinitely differentiable density $f_t$; 
using the self-similarity of stable procesess and writing $f=f_1$, 
we find that $f_t(x)=\imf{f}{xt^{-1/\alpha}}t^{-1/\alpha}$.   
Since $X$ is of unbounded variation, \cite{MR0240850} implies that $f$ is everywhere positive. 
We also infer that the semigroup of $X$ admits the transition densities $p_t(x,y)$ 
given by $\imf{p_t}{x,y}=\imf{f}{(y-x)t^{-1/\alpha}}t^{-1/\alpha}$; 
using the fact that $f$ is bounded, say by $M>0$, and that $\alpha>1$ we deduce the positivity,  
finitude and bicontinuity of the resolvent density\begin{lesn}
\imf{u_\lambda}{x,y}=\int_0^\infty e^{-\lambda t}\imf{p_t}{y-x}\, dt. 
\end{lesn}We will write $\imf{u_\lambda}{x}$ for $\imf{u_\lambda}{x,0}$. 
Let $T_\lambda$ be an exponential random variable of parameter $\lambda$ independent of $X$. 
Then the resolvent operator $U_\lambda$ of $X$ 
is defined by $\lambda U_\lambda f(x)=\esp{\imf{f}{x+X_{T_\lambda}}}$ 
and we can write\begin{lesn}
\imf{U_\lambda f}{x}= \int_0^\infty \imf{u_\lambda}{x,y}\imf{f}{y}\, dy. 
\end{lesn}The notation $\imf{U_\lambda}{x,A}$ will stand for $\imf{U_\lambda\indi{A}}{x}$. 

Since the L\'evy measure of $X$ has no atoms, we see that $X$ does not jump into zero. 
Let $B_\eps=(-\eps,\eps)$ and $H_\eps$ be the hitting time of $B_\eps$. 
Using the Markov property, we see that\begin{lesn} 
\imf{U_\lambda}{x, B_\eps}=\imf{\se_x}{e^{-\lambda H_\eps}\imf{U_\lambda}{X_{H_\eps},B_\eps }}.
\end{lesn}As $\eps\to 0$, $H_\eps$ increases, say to $H_0$. 
Since $X$ does not jump into zero, we must have $H_\eps<H_0$ for small enough $\eps$. 
Hence, $H_0$ is predictable and hence, by quasi-continuity of $X$, 
$X$ is continuous at $H_0$ so that $H_0$ equals the hitting time of $0$ of $X$. 
Also, taking limits as $\eps\to 0$ in the preceding display and using the positivity and bicontinuity of the resolvent density, one obtains
\begin{lesn}
\imf{\se_x}{e^{-\lambda H_0}}
=\frac{\imf{u_\lambda}{x}}{\imf{u_\lambda}{0}}
>0. 
\end{lesn}Hence for any $x\neq 0$, $H_0$ is finite with positive probability 
under any $\p_x$, so that $0$ is not polar. 
By self-similarity, $\imf{\p_x}{H_0<\infty}$ is independent of $x$; 
denote it by $\rho$. 
Let $R$ be the first return time to $0$. 
By the Markov property, we see that $\imf{\p_0}{R<\infty}\geq \rho$. 
Hence, there exists $t>0$ such that $\imf{\p_0}{R\leq t}\geq \rho/2$. 
By self-similarity, $\imf{\p_0}{R\leq t}$ does not depend on $t$, 
so that $\imf{\p_0}{R=0}\geq \rho/2$ and by the Blumenthal 0-1 law, 
we see that $\imf{\p_0}{R=0}=1$. 
Hence $0$ is a regular and instantaneous point. 
(The instantaneous character follows from the fact that $X$ is not compound Poisson.) 

We will make use of the following local limit theorem. 
\begin{theorem}[\cite{MR0048731}, \cite{MR0062975}]
Let $f$ be the density of $X_1$. 
Under the assumption that the law of $X^1_1$ has span $1$:
\begin{lesn}
\lim_{n\to\infty}\sup_{k\in \z}\abs{b_n\proba{X^1_n=k}-\imf{f}{\frac{k}{b_n}}}=0. 
\end{lesn}
\end{theorem}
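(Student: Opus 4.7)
The plan is the classical Fourier-inversion argument. Writing $\imf{\phi}{u}=\imf{\se}{e^{iuX^1_1}}$ and $\imf{\hat f}{v}=\imf{\se}{e^{ivX_1}}$, the stability of $X$ of index $\alpha\in(1,2]$ ensures $\abs{\imf{\hat f}{v}}=e^{-c\abs{v}^\alpha}$ for some $c>0$, so $\hat f\in L^1(\re)$ and Fourier inversion gives $\imf{f}{x}=\frac{1}{2\pi}\int_{\re}e^{-ivx}\imf{\hat f}{v}\,dv$. The span-$1$ hypothesis yields the lattice Fourier inversion $\proba{X^1_n=k}=\frac{1}{2\pi}\int_{-\pi}^{\pi}e^{-iuk}\imf{\phi}{u}^n\,du$. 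Rescaling $u=v/b_n$, subtracting, and taking absolute values uniformly in $k\in\z$ reduces the problem to showing
\begin{equation*}
\int_{-\pi b_n}^{\pi b_n}\abs{\imf{\phi}{v/b_n}^n-\imf{\hat f}{v}}\,dv\,+\,\int_{\abs{v}>\pi b_n}\abs{\imf{\hat f}{v}}\,dv\xrightarrow[n\to\infty]{}0.
\end{equation*}

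Given $\eps>0$, I would fix $A>0$ large and $\delta\in(0,\pi)$ small and partition the first integral into three pieces: $\abs{v}\leq A$, $A<\abs{v}\leq\delta b_n$, and $\delta b_n<\abs{v}\leq\pi b_n$. The outer tail is made small by integrability of $\hat f$. On $\abs{v}\leq A$, the convergence $X^1_n/b_n\Rightarrow X_1$ yields $\imf{\phi}{v/b_n}^n\to\imf{\hat f}{v}$ locally uniformly, and dominated convergence handles this piece. On the bulk $\delta b_n<\abs{v}\leq\pi b_n$, the span-$1$ hypothesis gives $1-\eta_\delta:=\sup_{\delta\leq\abs{u}\leq\pi}\abs{\imf{\phi}{u}}<1$, so this piece is bounded by $2\pi b_n(1-\eta_\delta)^n=o(1)$ since $b_n$ is regularly varying.

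The main obstacle is the intermediate annulus $A<\abs{v}\leq\delta b_n$, where neither tool above applies and one needs a uniform stretched-exponential domination $\abs{\imf{\phi}{v/b_n}}^n\leq e^{-c'\abs{v}^\alpha}$ for all large $n$ and all such $v$. My plan is the standard route: writing $\imf{\phi}{u}=1-\imf{\psi}{u}$, one uses $\abs{\imf{\phi}{u}}\leq e^{-\Re\imf{\psi}{u}}$ near $0$, so $\abs{\imf{\phi}{v/b_n}}^n\leq e^{-n\Re\imf{\psi}{v/b_n}}$. The domain-of-attraction assumption amounts to $n\imf{\psi}{v/b_n}\to-\log\imf{\hat f}{v}$ pointwise, and $\Re\imf{\psi}{\cdot}$ is regularly varying at $0$ with index $\alpha$ and normalising sequence $b_n$; together with Potter bounds this upgrades the pointwise convergence to the required uniform bound on $\abs{v}\leq\delta b_n$ for $\delta$ small. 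The intermediate contribution is then at most $\int_{\abs{v}>A}e^{-c''\abs{v}^\alpha}\,dv$, which is $<\eps$ for $A$ large. Summing the four estimates and letting $\eps\downarrow 0$ completes the proof.
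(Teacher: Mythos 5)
The paper does not prove this statement; it cites it directly to Gnedenko \cite{MR0048731} and Stone \cite{MR0062975} as a classical local limit theorem. Your Fourier-inversion argument is the standard textbook proof of that result (it is essentially Gnedenko's proof, as also presented in Ibragimov--Linnik), so there is nothing in the paper to compare it against; the question is simply whether your sketch is sound, and it is. The reduction to
\begin{equation*}
\int_{-\pi b_n}^{\pi b_n}\bigl|\phi(v/b_n)^n-\hat f(v)\bigr|\,dv+\int_{|v|>\pi b_n}|\hat f(v)|\,dv\longrightarrow 0
\end{equation*}
is correct and uniform in $k$, your four-way decomposition is the right one, and you correctly identify the intermediate annulus $A<|v|\leq\delta b_n$ as the crux, where one must convert the pointwise convergence $n\,\mathrm{Re}\,\psi(v/b_n)\to c|v|^\alpha$ into a uniform lower bound via regular variation and Potter's inequality. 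One small inaccuracy worth fixing: from $|\phi(u)|^2=1-2\,\mathrm{Re}\,\psi(u)+|\psi(u)|^2\leq 1-\mathrm{Re}\,\psi(u)$ near $0$ you get $|\phi(u)|\leq e^{-\mathrm{Re}\,\psi(u)/2}$, not $e^{-\mathrm{Re}\,\psi(u)}$; the factor $1/2$ is immaterial to the argument but the stated inequality is not the one that follows. You should also record explicitly that the Potter exponent $\alpha-\epsilon$ must be kept strictly positive (automatic here since $\alpha>1$) so that $e^{-c''|v|^{\alpha-\epsilon}}$ remains integrable and the tail over $|v|>A$ can be made small.
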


\begin{proof}[Proof of Lemma \ref{randomWalkHittingTimeLimitLemma}]
Let $e_n=\sup_{k\in\z}|b_n\proba{X^1_n=k}-\imf{f}{k/b_n}|$. 
From Theorem 2 in \cite{MR0138128}, we see that $b_n$ is regularly varying of index $1/\alpha$. 
We then use Potter's bounds (for sequences) as in \cite[Theorem 1.5.6]{MR898871}: 
for any $\delta >0$ there exists $A>1$ such that\begin{lesn}
b_n/b_k\leq A \bra{(n/k)^{\delta+1/\alpha}\vee (n/k)^{-\delta+1/\alpha}}. 
\end{lesn}Note that if $k/n\to t>0$ as $n\to\infty$, then $b_k/b_n\sim (k/n)^{1/\alpha}\to t^{1/\alpha}$. 

To get a limit theorem for the hitting times of $0$, 
consider the scaled random walk
 $X^n_t=X^1_{\floor{nt}}/b_n$, which jumps every $1/n$ and has span $1/b_n\to 0$. 
 Note that $H_0(\floor{b_nx}+X^1)/n$ equals the hitting time of $0$ of $\floor{b_nx}/b_n+X^n$. 
We then use the 
random variables $T=T_\lambda$ and $T_n=\ceil{nT}/n$ 
to define a (type of) resolvent\begin{lesn}
\lambda\imf{u^n_\lambda}{x,y}=\imf{\se_x}{\int_0^\infty 
\lambda e^{-\lambda t}\indi{X^1_{\ceil{nt}}/b_n=y}}= \imf{\p_x}{X^n_{T_n}=y}
\end{lesn}for $x,y\in \z/b_n$. (The inequality $T_n\geq T$ simplifies some arguments, which is why it was chosen and explains why $u^n_\lambda$ is referred to as \emph{a} resolvent. )
We again write $\imf{u^n_\lambda}{x}=\imf{u^n_\lambda}{x,0}$. 
As before, note that if $H^n_0$ is the hitting time of $0$ of $X^n$ and $x\neq 0$, 
we have
\[
	\imf{\se_x}{e^{-\lambda H^n_0}}=\frac{\lambda u^n_\lambda(x)}{u^n_\lambda(0)+e^{\lambda/n}-1}
\]
We now wish to see that $H^n_0\to H_0$ in distribution, 
where $H^n_0$ is taken under $\p_{x_n}$ for any $x_n\in \z/b_n$ 
such that $x_n\to x\neq 0$. 
This will follow if we prove that $b_n\imf{u^n_\lambda}{x_n}\to \imf{u_\lambda}{x}$. 
Indeed, it implies first that $u^n_{\lambda}(0)/n\sim 0$ since $b_n$ is regularly varying of index $1/\alpha$ and $\alpha\in (1,2)$, and then the convergence of the Laplace transform of $H^n_0$. 

Let us now prove that $b_n\imf{u^n_\lambda}{x_n}\to \imf{u_\lambda}{x}$ 
for any $x\in\re$. 
Note first that\begin{lesn}
\lambda\imf{u_\lambda}{x}= \esp{\imf{f_T}{x}}=\esp{\imf{f_1}{xT^{-1/\alpha}}T^{-1/\alpha}}. 
\end{lesn}%
Since $T_n\to T$ and $T_n\geq T$, we see that $T_n^{-1/\alpha}\leq T^{-1/\alpha}$ 
and the latter variable has finite expectation since $1/\alpha\in [1/2,1)$. 
Since $T_n$ converges almost surely to the positive random variable $T$, 
then $\imf{f}{b_n x_n/b_{nT_n}}b_n/b_{nT_n}$ converges almost surely 
to $\imf{f}{x T^{-1/\alpha}}T^{-1/\alpha}$. 
Using Potter's bounds and the boundedness of $f$, 
we see that for any $\delta \in (0,1/\alpha)\subset(0,1)$
\begin{linenomath}
\begin{align*}
\imf{f}{b_n x_n/b_{nT_n}}b_n/b_{nT_n}
&\leq A' \bra{T_n^{\delta-1/\alpha}\vee T_n^{-\delta-1/\alpha}}
\\&\leq A'\bra{ T^{\delta-1/\alpha}\vee T^{-\delta-1/\alpha}}. 
\end{align*}\end{linenomath}The latter random variable has finite expectation. 
Hence, we can use dominated convergence to conclude that
\begin{linenomath}
\begin{align*}
\sum_{k} e^{-\lambda k/n}\paren{1-e^{\lambda/n}} \imf{f}{x_nb_n/b_{k+1}}b_n/b_{k+1}
&=\esp{\imf{f}{b_n x_n/b_{nT_n}}b_n/b_{nT_n}}
\\&\to \esp{\imf{f_1}{xT^{-1/\alpha}}T^{-1/\alpha}}
=\imf{u_\lambda}{x}. 
\end{align*}\end{linenomath}On the other hand, note that
\begin{linenomath}
\begin{align*}
&\abs{b_n \imf{u^n_\lambda}{x_n}- \esp{\imf{f}{b_nx_n/b_{nT_n}}b_n/b_{nT_n}} }
\\&\leq \sum_k  e^{-\lambda k/n}\paren{1-e^{\lambda/n}} 
\abs{b_n\proba{X^1_{k+1}=b_nx_n} -  \imf{f}{b_nx_n/b_{k+1}}b_n/b_{k+1}}
\\&\leq \sum_k e^{-\lambda k/n}\paren{1-e^{\lambda/n}} b_ne_{k+1}/b_{k+1}
\\&\leq \sum_k e^{-\lambda k/n}\paren{1-e^{\lambda/n}} e_{k+1}A\bra{\frac{1}{(k+1/n)^{1/\alpha-\delta}}\vee\frac{1}{(k+1/n)^{1/\alpha+\delta}}}
\\&=\esp{T_n^{-1/\alpha-\delta}\vee T_n^{-1/\alpha+\delta}e_{nT_n}}.
\end{align*}\end{linenomath}Since $T_n^{-1/\alpha-\delta}\vee T_n^{-1/\alpha+\delta}e_{nT_n}\to 0$ as $n\to\infty$ and
\[
	T_n^{-1/\alpha-\delta}\vee T_n^{-1/\alpha+\delta}e_{nT_n}\leq CT^{-1/\alpha-\delta}\vee T^{-1/\alpha+\delta}, 
\]we can apply dominated convergence to conclude that\begin{lesn}
\abs{b_n \imf{u^n_\lambda}{x_n}- \esp{\imf{f}{b_nx_n/b_{nT_n}}b_n/b_{nT_n}} }\to 0.
\end{lesn}We therefore conclude that $b_n\imf{u^n_\lambda}{x_n}\to \imf{u_\lambda}{x}$ and that therefore $H^n_0\to H_0$ as $n\to\infty$. 

We now prove the weak convergence of hitting times for bridges. 
First, in the discrete setting, 
consider the law $\p_{x_n,y_n}^{\floor{nt},n}$ of $(X^n_{s},s\leq t)$ under $\p^n_{x_n}$ conditioned on $X^n_t=y_n$. 
(The conditioning event has positive probability for large enough $n$ 
by the local limit theorem and positivity of stable densities. ) 
Under the law $\p_{x_n,0}^{\floor{nt}}$, the hitting time $H^n_0$ is always finite and bounded by $t$. 
The following absolute continuity relationship follows immediately from the Markov property: 
for every $s\leq t$ and any $A\in \F^n_s=\sag{X^n_r:r\leq s}$ we have
\begin{lesn}
\imf{\p_{x_n,y_n}^{\floor{nt},n}}{A}
=\imf{\se^n_{x_n}}{\indi{A}\frac{\imf{p^n_{t-s}}{X^n_s, y_n}}{\imf{p^n_t}{x_n,y_n}}}.
\end{lesn}Using the reasoning of Proposition VIII.1.3 of \cite{MR1725357}, 
the above absolute continuity relationship can be extended to the hitting time $H^n_0$, 
so that for every $A\in \F^n_{H^n_0}$
\begin{lesn}
\imf{\p_{x_n,0}^{\floor{nt},n}}{A}
=\imf{\p_{x_n,0}^{\floor{nt},n}}{A\indi{H^n_0\leq t}}
=\imf{\se^n_{x_n}}{\indi{A}\frac{\imf{p^n_{t-H^n_0}}{0,0}}{\imf{p^n_t}{x_n,0}}\indi{H^n_0\leq n}}. 
\end{lesn}In particular, we see that\begin{lesn}
1=\imf{\se^n_{x_n}}{\frac{\imf{p^n_{t-H^n_0}}{0,0}}{\imf{p^n_t}{x_n,0}}\indi{H^n_0\leq n}}. 
\end{lesn}

The same results hold in continuous space, made rigorous using the theory of \cite{MR2789508}. 
Namely, the bridge measures $\p_{x,y}^t$ are determined by the fact that for any $s<t$ and every $A\in \F_s$ 
we have the local absolute continuity relationship\begin{lesn}
\imf{\p_{x,y}^t}{A}
=\imf{\se_{x}^t}{\indi{A}\frac{\imf{p_{t-s}}{X_s,y}}{\imf{p_t}{x,y}}},
\end{lesn}as well as their weak continuity with respect to $x$ and $y$. 
Furthermore, the image of $\p_{x,y}^t$ under the time-reversal operation 
$X\mapsto (X_{(t-s)-},0\leq s\leq t)$ 
is the bridge from $y$ to $x$ in $t$ units of time of the dual process $-X$ 
(weak continuity is helpful at this point in order to inherit the time-reversal property 
from the L\'evy processes to their bridges). 
Since under the law $\hat \p_0$, $0$ is regular, we see that the last zero of $\hat X$ before time $t$ 
is strictly positive. 
Therefore, under the measure $\p_{x,0}^t$, $H_0<t$ almost surely. 
We can therefore apply Proposition VIII.1.3 of \cite{MR1725357} to obtain that, 
for every $A\in \F_{H_0}$
\begin{lesn}
\imf{\p_{x,0}^t}{A}
=\imf{\p_{x,0}^t}{A,H_0<t}
=\imf{\se_x}{\indi{A}\indi{H_0<t}\frac{\imf{p_{t-H_0}}{0,0}}{\imf{p_t}{x,0}}}. 
\end{lesn}In particular, note that\begin{lesn}
1=\imf{\se_x}{\indi{H_0<t}\frac{\imf{p_{t-H_0}}{0,0}}{\imf{p_t}{x,0}}}. 
\end{lesn}

Assume now that the weak convergence of $H^n_0\to H$ for the random walk hitting times holds almost surely. 
By Scheff\'e's lemma and the continuity in the time variable  of the transition densities, 
which holds by scaling, 
we deduce that\begin{linenomath}
\begin{align*}
\imf{\se_{x_n,0}^{t,n}}{e^{-\lambda H^n_0}}
&=\imf{\se_{x_n}}{e^{-\lambda H^n_0}\indi{H^n_0\leq t}\frac{\imf{p^n_{t-H^n_0}}{0,0}}{\imf{p^n_t}{x_n,0}}}
\\&\to \imf{\se_{x}}{e^{-\lambda H_0}\indi{H_0<t}\frac{\imf{p_{t-H_0}}{0,0}}{\imf{p_t}{x_n,0}}}
=\imf{\se_{x,0}^{t}}{e^{-\lambda H_0}}. 
\end{align*}\end{linenomath}We deduce the stated weak convergence. 
\end{proof}

\begin{proof}[Proof of Theorem \ref{randomWalkLocalTimeLimitTheorem}]
Let $X^n_{t}=X^1_{\floor{nt}/n}/b_n$. 
Thanks to a theorem of Skorohod, our assumption tells us that $X^n$ converges weakly to $X$. 
Again by a theorem of Skorohod, we will assume that the convergence holds almost surely in some probability space. 
In particular, since $X$ is continuous almost surely at any deterministic $t\geq 0$, we see that $X^n_t\to X_t$  almost surely. 
Also, since the law of $X_t$ is absolutely continuous, we see that $X_t\neq 0$ almost surely so that $t<d_t$ almost surely. 

Let $d^n_{\eps,t}$ and $d_{\eps,t}$ denote the first times that $X^n$ and $X$ enter the sets $(-\eps,\eps)$ after time $t$. 
Let us also define $g^n_{\eps,t}$ and $g_{\eps,t}$ as the last times the left limits of 
$X^n$ and $X$ belong to the set $(-\eps,\eps)$. 
(The asymmetry in the definitions is required so that,  when using time-reversal, the arguments become symmetric.) 
Consider the inequality\begin{lesn}
\proba{\abs{d_t-d^n_t}>\delta}
\leq 
\proba{\abs{d_t-d_{\eps,t}}>\delta/3}
+\proba{\abs{d_{\eps,t}-d^n_{\eps,t}}>\delta/3}
+\proba{\abs{d^n_{\eps,t}-d^n_t}>\delta/3}. 
\end{lesn}and the corresponding one for the $g$-type random variables. 
We now prove that three summands in the right-hand side of the above inequality tend to zero if one first takes limit as $n\to\infty$ and then as $\eps\to 0$. 
This will prove the convergence in probability needed to apply Theorem \ref{mainLimitTheorem} at the end of the proof. 
\begin{description}
\item[First summand] 
	This argument has been given before, 
	when proving that stable processes are continuous at their hitting time of zero, 
	just before the proof of Lemma \ref{randomWalkHittingTimeLimitLemma}. 
	Note that $d_{\eps,t}\leq d_{\eps',t}\leq d_t$ if $\eps'\leq \eps$ 
	and that $d_{\eps,t}$ is a stopping time for any $t$. 
	Since the L\'evy measure of $X$ has no atoms, we see that $X$ does not jump into zero. 
	It follows that $d_{\eps,t}$ has a limit $\tilde d_t$ which cannot equal any $d_{\eps,t}$. 
	By quasi-continuity of $X$, it follows that 
	$0=\lim_{\eps\downarrow 0}X_{d_{\eps,t}}=X_{\tilde d_t-}= X_{\tilde d_t}$, 
	so that $\tilde d_t=d_t$ and hence $d_{\eps,t}\to d_t$ almost surely as $\eps\downarrow 0$. 
	In particular:
	\[
		\lim_{\eps\to 0}\lim_{n\to\infty}\proba{\abs{d_t-d_{\eps,t}}>\delta/3}=0\text{ for all }\delta>0.
	\]
	The argument is slightly simpler for $g_t$ and $g_{\eps,t}$ by right continuity. 
	Indeed, we again have the inequalities $g_t\leq g_{\eps',t}\leq g_{\eps,t}$ if $0<\eps'<\eps$. 
	Hence $g_{\eps,t}$ decreases to a limit $\tilde g_t$ as $\eps\downarrow 0$. 
	By right-continuity, we see that $X_{\tilde g_t}=0$, 
	so that $g_t=\tilde g_t$ and therefore $g_{\eps,t}\to g_t$ in probability. 
	In particular: 
	\[
		\lim_{\eps\to 0}\lim_{n\to\infty}\proba{\abs{g_t-g_{\eps,t}}>\delta/3}=0\text{ for all }\delta>0.
	\]
\item[Second summand] 
	We are interested in the convergence of the hitting time of an open set after time $t$. 
	By the Markov property and Lemma \ref{SkorohodSpaceHittingTimeOfOpenSetLemma}, 
	we only need to prove that 
	if
	\[
		\tilde T_{\eps}=\inf\set{t\geq 0: X_{t}\in\set{-\eps,\eps}}, 
		\tilde T^-_\eps=\inf\set{t\geq 0: X_{t-}\in\set{-\eps,\eps}}
	\]and $T_\eps=\inf\set{t\geq 0: X_t\in (-\eps,\eps)}$, 
	then $T_\eps,\leq \tilde T_\eps, \tilde T^-_\eps$ almost surely under any $\p_x$. 
	Since $\tilde T^-_\eps$ is a predictable stopping time then $X$ is continuous at this time. 
	As in the previous item, we also see that $\tilde T^-_\eps=\tilde T_\eps$. 
	However, by the strong Markov property and regularity of both half-lines for stable processes, 
	we see that on each  the sets $\tilde T_\eps\leq T_\eps$ and $\tilde T^-_\eps\leq T_\eps$ , 
	we actually have $\tilde T^-_\eps=\tilde T_\eps=T_\eps$. 
	We therefore conclude that the hitting time of $(-\eps, \eps)$ after $t$ is continuous 
	(as a functional on Skorohod space by Lemma \ref{SkorohodSpaceHittingTimeOfOpenSetLemma})
	at almost every trajectory of $X$. 
	We conclude that $d^n_{\eps,t}\to d_{\eps,t}$ almost surely. 
	In particular
	\[
		\lim_{\eps\to 0}\lim_{n\to\infty} \proba{\abs{d^n_{\eps,t}-d_{\eps,t}}>\delta/3}
		=0. 
	\]
	
	In the case of $g_{\eps, t}$, consider the time-reversed process (from $t$) defined by $\hat X_s=X_{(t-s)-}$. 
	Then $g_{\eps, t}$ becomes the hitting time of $(-\eps,\eps)$ of $\hat X$ 
	and we will deduce that $g_{\eps, t}$ is continuous at almost all sample paths 
	by using Lemma \ref{SkorohodSpaceHittingTimeOfOpenSetLemma}. 
	For this, we note that the law of $\hat X$ given $X_t=x$ is $\hat \p_{x,0}^t$. 
	Now, we can use almost the same reasoning as before, since if $\hat X$ reaches $\{-\eps,\eps\}$ at $s<t$, 
	then $(\hat X_{s+r},r\leq s-t)$ has law $\hat\p_{\hat X_s,0}^{t-s}$, and under this law, 
	$(-\eps, \eps)$ is reached immediately by regularity. 
	We conclude that $g^n_{\eps,t}\to g^n_{\eps,t}$ almost surely. 
	In particular, 
	\[
		\lim_{\eps\to 0}\lim_{n\to\infty} \proba{\abs{g^n_{\eps,t}-g_{\eps,t}}>\delta/3}
		=0. 
	\]
\item[Third summand] Using the strong Markov property for $X^n$, 
	we see that $d^n_{t}-d^n_{\eps,t}$ 
	has the same law as the hitting time of zero starting from $X^n_{d^n_{\eps,t}}$, 
	so that in particular
	\begin{lesn}
		\esp{e^{-\lambda [d^n_{t}-d^n_{\eps,t}]}}
		=\esp{\imf{\se_{X^n_{d^n_{\eps,t}}}}{e^{-\lambda H^n_0}}}. 
	\end{lesn}However as we proved for the second summand that $T_\eps\leq \tilde T_\eps$, 
	Lemma \ref{SkorohodSpaceHittingTimeOfOpenSetLemma} implies that 
	$X^n_{d^n_{\eps,t}}\to X_{d_{\eps,t}}$ as $n\to\infty$ so that, by Lemma \ref{randomWalkHittingTimeLimitLemma},  
	\[
		\esp{\imf{\se_{X^n_{d^n_{\eps,t}}}}{e^{-\lambda H^n_0}}}
		\to \esp{\imf{\se_{X_{d_{\eps,t}}}}{e^{-\lambda H_0}}}
		=\esp{e^{-\lambda (d_{t}-d_{\eps,t})}}. 
	\]
	We conclude that
	\[
		\lim_{\eps\to 0}\lim_{n\to\infty}\proba{d^n_{t}-d^n_{\eps,t}>\delta/3}
		=\lim_{\eps\to 0}\proba{d_t-d_{\eps,t}>\delta/3}
		=0.
	\]
	Regarding $g^n_{\eps,t}-g^n_t$, 
	we use the backward strong Markov property (cf. Theorem 2 in \cite{MR2789508}) 
	which states that, given $g^n_{\eps, t}=s$ and $X^n_{s}=y$, 
	the law of $\paren{X^n_r,r\leq s}$ is the bridge law $\p_{0,y}^{s,n}$. 
	(It is simple to prove this for discrete time Markov chains.)
	Using time-reversal, we can then write\begin{lesn}
	\esp{e^{-\lambda \bra{g^n_{\eps,t}-g^n_t}}}
	=\esp{\imf{\hat\se_{X^n_{g^n_{\eps,t}},0 }^{t-g^n_{\eps,t},n} }{e^{-\lambda H_0}}}. 
	\end{lesn}Applying Lemma \ref{SkorohodSpaceHittingTimeOfOpenSetLemma} 
	for the time-reversed random walk at $t$, 
	we see that $g^n_{\eps,t}\to g_{\eps,t}$ and  $X^n_{g^n_{\eps,t}}\to X_{g_{\eps,t}}$ as $n\to\infty$. 
	But then Lemma \ref{randomWalkHittingTimeLimitLemma} tells us that 
	\[
	\esp{e^{-\lambda \bra{g^n_{\eps,t}-g^n_t}}}
	\to \esp{\imf{\hat\se_{X_{g_{\eps,t}},0 }^{t-g_{\eps,t}} }{e^{-\lambda H_0}}}
	=\esp{e^{-\lambda \bra{g_{\eps,t}-g_t}}}.
	\]By the analysis for the first summand, we conclude that 
	\[
		\lim_{\eps\to 0}\lim_{n\to\infty}\proba{g^n_{\eps,t}-g^n_{t}>\delta/3}
		=\lim_{\eps\to 0}\proba{g_{\eps,t}-g_t>\delta/3}
		=0. 
	\]
%
\end{description}

Putting the above together, we see that
\[
	\limsup_{n\to\infty} \proba{\abs{d^n_t-d_t}>\delta}=0
	\quad\text{and}\quad
	\limsup_{n\to\infty} \proba{\abs{g^n_t-g_t}>\delta}=0. 
\]

Theorem \ref{mainLimitTheorem} therefore applies and shows us that $(X^n,L^n)$ converges in probability to $(X,L)$ in this particular probability space. 
We therefore get weak convergence on any sequence of probability spaces where the random walk $S$ is defined. 
\end{proof}


\subsection{Convergence of local times of Galton-Watson processes with immigration}
\label{subsectionOnGWI}

In this subsection, we will prove Theorem \ref{TheoremOnConvergenceOfLocalTimesOfGWIProcesses}. 
Recall that $Z^1$ stands for a $\gwi(\mu,\nu)$ process started at zero, 
where $\mu$ is the geometric distribution on $\na$ of parameter $1/2$ 
and $\nu$ is the geometric distribution on $\na$ of parameter $p$ and mean $\delta=p/(1-p)\in (0,1)$. 

Let us discuss the scaling limit of $Z^1$, which will be described in terms of a Brownian motion $B$. 
Note that $\mu$ has mean $1$ and finite variance $2$. 
Therefore, if $X^1$ is a (downwards skip-free) random walk which has jumps of size $k$ 
with probability $\mu_{k+1}$, 
and $X^n_{t}=X^1_{n^2 t}/n$, it follows that $(X^n_t)$ converges weakly to  $(B_{2 t})$. 
Also, consider a random walk $Y^1$ with jump distribution $\nu$. 
Setting $Y^n_{t}=Y^1_{nt}/n$, 
a suitable extension of the strong law of large numbers tells us that $Y^n\to \delta\id$ 
(uniformly on compact sets almost surely). 
It is not hard to show that we can recursively construct $Z^1$ in terms of $X^1$ and $Y^1$ by setting
\[
	C_{-1}=0, \quad
	Z^1_m=k+X^1_{C_{m-1}}+Y^1_m \quad\text{and}\quad
	C_{m}=C_{m-1}+Z_m.
\]The above can be seen as a discrete time-change equation which possesses a strong stability theory. 
For example, Corollary 7 in \cite{MR3098685}  tells us that $Z^n$ converges 
to the unique solution to
\begin{equation}
	\label{TCEOfZeroDimSQBesselType}
	Z_t=B_{2\int_0^t Z_s\, ds}+\delta t. 
\end{equation}(By Knight's theorem, 
$Z$ is a weak solution to the (perhaps more familiar) SDE \eqref{SDEofBesselType} of the squared Bessel type 
in the statement of Theorem \ref{TheoremOnConvergenceOfLocalTimesOfGWIProcesses}.) 
And, if we assume that $X^n$ and $Y^n$ are independent 
and converge almost surely to $B$ and $\delta \id$ 
in an adequate probability space (which we assume from now on), 
the convergence of $Z^n$ to $Z$ actually holds almost surely. 
The process $Z$ is called a Continuous-State Branching Process with Immigration, 
with reproduction mechanism $\Psi$ and immigration mechanism $\Phi$, 
where $\Psi(\lambda)=\lambda^2$ and $\Phi(\lambda)=\delta \lambda$ 
are the Laplace exponents of the spectrally positive L\'evy process $B_{2\cdot}$ 
and the (deterministic!) subordinator $\delta\id$. 
This will be abridged $\cbi(\Psi,\Phi)$. 
These processes were introduced in \cite{MR0290475} as the possible large population scaling limits
 of $\gwi$ processes. 

Of course, we will obtain Theorem \ref{TheoremOnConvergenceOfLocalTimesOfGWIProcesses} 
as an application of Theorem \ref{mainLimitTheorem}. 
Recall that $0$ is regular and instantaneous for $Z$, 
so that it remains to prove that, for any $t>0$, $(g^n_t,d^n_t)\to (g_t,d_t)$ in probability as $n\to\infty$. 
The strategy of proof is similar to the random walk case, 
except that there are some simplifications that arise from assuming the specific forms 
of the offspring and immigration distribution, 
since $\mu$ is a special case of a fractional linear offspring law. 
On the other hand, 
Theorem \ref{TheoremOnConvergenceOfLocalTimesOfGWIProcesses} 
is not given a more general statement because, 
as in the random walk case, 
we make use of local limit theorems, 
existence of bridges and time-reversal, 
which have not been established for more general 
$\cbi$ processes. 

Our strategy to prove convergence in probability of endpoints of excursions follows the following steps. 
\begin{enumerate}
\item Convergence of extinction times of $\gw$ processes to extinction times of $\cb$ processes. 
\item Weak convergence of endpoints of excursions of $\gwi$ processes. 
\item Convergence of hitting times of $\gwi$ processes from suitable initial points to hitting times of zero for $\cbi$ processes (using the above two items). 
\item Convergence in probability of endpoints of excursions; 
left endpoints using a (non-uniform) local limit theorem, bridges and reversibility. 
\end{enumerate}

\begin{proposition}
\label{propositionOnConvergenceOfExtinctionTimesForOneGWProcess}
Let $\tilde U^n$ be a $\gw(\mu)$ process started at $k_n$, 
$U^n_{t}=\tilde U^{n}_{\floor{nt}}/n$ 
and assume that $k_n/n\to z\geq 0$. 
Then, the extinction time of $U^{n}$ converges weakly as $n\to\infty $ 
to the extinction time of the solution $U$ of \eqref{TCEOfZeroDimSQBesselType} with $\delta=0$. 
\end{proposition}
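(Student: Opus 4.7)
The plan is to bypass any delicate path-space argument and instead compute the CDF of the discrete extinction time explicitly, using that the offspring law $\mu$ is fractional linear, and then identify the limit with the well-known CDF of the extinction time of the Feller diffusion. Since the extinction-time functional is not continuous on Skorohod space, one cannot simply invoke $U^n \to U$ to conclude, so a direct distributional computation seems both necessary and, in this geometric setting, quite clean.

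First I would record the explicit iterates of the generating function. The offspring law $\mu_k = 2^{-k-1}$, $k\geq 0$, is critical with generating function $\imf{f}{s}=1/(2-s)$, and a standard fractional-linear induction gives the iterates
\begin{equation*}
\imf{f_m}{s}=\frac{m-(m-1)s}{m+1-ms},\qquad m\geq 1,
\end{equation*}so that $\imf{f_m}{0}=m/(m+1)$. Denoting by $T^n$ the extinction time of $\tilde U^n$, the branching property then yields the closed form
\begin{equation*}
\proba{T^n\leq m}=\paren{\frac{m}{m+1}}^{k_n}.
\end{equation*}Rescaling by $n$, for $t>0$ and using $k_n/n\to z$, one obtains
\begin{equation*}
\proba{T^n/n\leq t}=\paren{\frac{\floor{nt}}{\floor{nt}+1}}^{k_n}=\paren{1-\frac{1}{\floor{nt}+1}}^{k_n}\xrightarrow[n\to\infty]{} e^{-z/t},
\end{equation*}and for $t=0$ clearly $\proba{T^n/n\leq 0}\to \indi{z=0}$, which matches $e^{-z/t}\to 0$ as $t\downarrow 0$ when $z>0$.

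Next I would identify $t\mapsto e^{-z/t}$ as the CDF of the extinction time of $U$. Since $U$ is a $\cb$ process with branching mechanism $\imf{\Psi}{\lambda}=\lambda^2$, its semigroup satisfies $\espc{e^{-\lambda U_t}}{U_0=z}=e^{-z \imf{v_t}{\lambda}}$ where $\partial_t v_t=-v_t^2$ with $v_0=\lambda$, so $\imf{v_t}{\lambda}=\lambda/(1+\lambda t)$. Letting $\lambda\to\infty$ gives $\proba{U_t=0\mid U_0=z}=e^{-z/t}$; equivalently, the extinction time $T$ of $U$ satisfies $\proba{T\leq t}=e^{-z/t}$. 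Combined with the previous display, this gives $T^n/n\Rightarrow T$ at every continuity point of the limiting CDF (and the limiting law has no atoms in $(0,\infty)$), which is the claimed weak convergence.

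The only potential subtlety is the behaviour at the boundary: when $z=0$ the limiting extinction time is $0$ and one must verify that $T^n/n\to 0$ in probability, which follows from $\paren{\floor{nt}/(\floor{nt}+1)}^{k_n}\to 1$ for every $t>0$ provided $k_n=o(n)$, a consequence of $k_n/n\to 0$. No tightness or pathwise argument is needed, so I do not expect a real obstacle; the main content is the fractional-linear miracle that gives the exact formula for $\imf{f_m}{0}$, and from there the proof is essentially a one-line calculation.
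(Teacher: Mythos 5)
Your proof is correct and follows essentially the same route as the paper: compute the discrete extinction-time CDF explicitly via the fractional-linear iterates $f^{\circ m}(0)=m/(m+1)$, rescale, and match it against $e^{-z/t}$ for the Feller diffusion. The only cosmetic difference is that the paper cites \cite{MR0408016} for $\proba{T_0\leq t}=e^{-z/t}$ while you re-derive it from the Riccati equation $\partial_t v_t=-v_t^2$; both are standard.
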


\begin{proof}
Recall that the extinction time of a $\gw$ or a $\cb$ is the hitting time of $0$, 
when the process becomes absorbed. 

Following the analysis of \cite[\S 3]{MR0408016}, the extinction time $T_0$ of $U$ satisfies
\[
	\proba{T_0\leq t}=\proba{U_t=0}=e^{-z/t}. 
\]On the other hand, let $f$ be the generating function of $\mu$, given by
\[
	\imf{f}{s}=\frac{1}{2-s}. 
\]Then, the iterates of $f$ admit the following simple form
\[
	f^{\circ n}(s)=\underbrace{f\circ\cdots\circ f}_{\text{$n$ times}}(s)=\frac{n - (n - 1)s}{n + 1 - ns}
\]As is well known, $\p(\tilde U^{n}_m=0)=f^{\circ m}(0)^{k_n}$. 
Therefore
\[
	\proba{U^{n}_t=0}
	= f^{\circ \floor{nt}}(0)^{nz}
	=\bra{1-\frac{1}{\floor{nt}+1}}^{nz}
	\to e^{-z/t}.\qedhere
\]
\end{proof}

\begin{proposition}
\label{PropositionOnWeakConvergenceOfExcursionEndpointsForGWI}
Under the setting of Theorem \ref{TheoremOnConvergenceOfLocalTimesOfGWIProcesses}, 
the random variables $(g^n_t,d^n_t)$ converge weakly as $n\to\infty$ to $(g_t,d_t)$. 
\end{proposition}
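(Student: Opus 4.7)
The plan is to use the random cutout representation of the zero sets of $Z^1$ and $Z$. Decomposing $Z^1$ by immigration events, for each $k\geq 1$ let $W^{(k)}$ be an independent $\gw(\mu)$ process started from the immigrant batch $I_k$ and let $\xi_k$ be its extinction time (with $\xi_k=0$ when $I_k=0$). Then $Z^1_m=\sum_{k=1}^m W^{(k)}_{m-k}$, so $\set{m\in\na:Z^1_m=0}=\na\setminus \bigcup_{k\geq 1}[k, k+\xi_k)$. In parallel, $Z$ admits a Poisson superposition $Z_t=\int \omega(t-r)\,\mathcal M(dr,d\omega)$, where $\mathcal M$ is a Poisson random measure on $[0,\infty)\times\mathcal E$ ($\mathcal E$ being the space of $\cb(\Psi)$ excursion trajectories) with intensity $\delta\,dr\otimes \mathscr N$ for the associated excursion measure $\mathscr N$, and its zero set is $[0,\infty)\setminus\bigcup_{(r,\omega)\in\mathcal M}[r,r+\zeta(\omega))$ with $\zeta$ the life-length. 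The pairs $(g^n_t, d^n_t)$ and $(g_t,d_t)$ are thus determined by these cutout measures.

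For a fixed threshold $\eta>0$ I would first prove that the truncated cutout point process $\mathcal M^n_\eta:=\sum_{k\geq 1}\indi{\xi_k/n>\eta}\,\delta_{(k/n,\,\xi_k/n)}$ converges weakly on $[0,T]\times(\eta,\infty)$ to the analogous $\eta$-truncation $\mathcal M_\eta$ of $\mathcal M$. Since $\mathcal M^n_\eta$ is a sum of independent indicators with uniformly vanishing success probabilities, a standard Poisson limit via factorial moments reduces this to convergence of the mean measure. The latter follows from the explicit iterate $f^{\circ m}(0)=m/(m+1)$ computed in Proposition~\ref{propositionOnConvergenceOfExtinctionTimesForOneGWProcess}: averaging $\proba{\xi_k/n>\eta\mid I_k=j}$ against the geometric law of $I_k$ and summing over $k\leq nT$ yields an asymptotic intensity matching $\delta\,dr\otimes \mathscr N|_{\zeta>\eta}$. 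The map sending a locally finite configuration on $[0,T]\times(\eta,\infty)$ to the endpoints of the connected component of $\bigcup_{(r,\xi)}[r,r+\xi)$ that contains $t$ is continuous at configurations for which $t$ lies in the interior of such a component, and this holds almost surely because $\sip(Z_t=0)=0$. Continuous mapping therefore yields $(g^{n,\eta}_t,d^{n,\eta}_t)\Rightarrow (g^\eta_t,d^\eta_t)$, where $(g^{\cdot,\eta}_t,d^{\cdot,\eta}_t)$ are computed from cutouts of length greater than $\eta$ only.

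Finally, I would let $\eta\downarrow 0$. On the limit side, monotonicity in $\eta$ (the excursion straddling $t$ only shrinks as more cutouts are included) and the fact that $\mathscr N$ puts infinite mass on every neighborhood of $0$ give $(g^\eta_t,d^\eta_t)\to (g_t,d_t)$ almost surely. On the prelimit side one needs a uniform-in-$n$ tail bound showing that the aggregate effect of short cutouts does not displace $(g^n_t,d^n_t)$ appreciably from $(g^{n,\eta}_t,d^{n,\eta}_t)$; interchanging with the weak limit in $n$ then closes the diagram and yields $(g^n_t,d^n_t)\Rightarrow (g_t,d_t)$. The principal obstacle lies precisely in this uniform small-cutout control: the $O(n)$ cutouts of length $O(1/n)$ accumulate and could a priori conspire to merge distinct macroscopic excursions, so one must bound the probability of such coverings using the explicit single-immigrant tail $\proba{\xi_k>m\mid I_k=1}=1/(m+1)$ together with second-moment or coupling estimates controlling the Hausdorff-type distance between the full zero set and its $\eta$-truncation.
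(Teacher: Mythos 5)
Your proposal and the paper both begin from the random cutout representation of the zero set of the GWI process, but then diverge sharply. The paper's own proof never passes through point process convergence: it computes the renewal density $\proba{n\in\mc{U}}=\prod_{m\leq n}g\circ f^{\circ m}(0)$ explicitly, deduces a power-law asymptotic, infers via formula \eqref{LaplaceTransformLeftEndpointOfExcursionDiscreteTime1} that the first return time $R_1$ satisfies $1-\se(e^{-\lambda R_1})\sim c\lambda^{1-\delta}$, concludes that $R$ lies in the domain of attraction of a stable subordinator of index $1-\delta$, and then reads off the joint Laplace transform of $(g^n_{T/\theta},d^n_{T/\theta})$ from \eqref{leftEndpointSubLTEq}--\eqref{LaplaceTransformLeftEndpointOfExcursionDiscreteTime1}, finally converting from the exponential time $T/\theta$ to a fixed $t$ using the duality $\proba{s\leq d^n_t}=\proba{g^n_s\leq t}$. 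This renewal-theoretic route works entirely with one-dimensional Laplace transforms and makes the small cutouts invisible from the start, since they are already averaged into $R_1$.

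Your route, by contrast, aims at a direct Poisson limit for the $\eta$-truncated cutout point process followed by continuous mapping and a $\eta\downarrow0$ limit. The first two steps are plausible and well within reach given the explicit iterate $f^{\circ m}(0)=m/(m+1)$, but the third step is where the argument currently fails. You correctly identify the issue yourself: one must show $\lim_{\eta\to0}\limsup_n\proba{|d^{n,\eta}_t-d^n_t|>\delta}=0$, i.e.\ that the $O(n)$ short cutouts cannot, in aggregate, merge macroscopic excursions or displace the endpoints by a fixed amount. This is precisely the covering-theoretic content that Fitzsimmons--Fristedt--Shepp type analysis handles, and it is not automatic; in the regime $\delta\in(0,1)$ the uncovered set has zero Lebesgue measure, so ``almost everything'' is covered and naive volume bounds fail. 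Gesturing at ``second-moment or coupling estimates'' does not close this gap. In effect you would be re-deriving, in an ad hoc way, the small-jump control already carried out in Lemma~\ref{largeJumpConvergenceLemma} of the paper's proof of the main theorem, whereas the whole point of Proposition~\ref{PropositionOnWeakConvergenceOfExcursionEndpointsForGWI} is to feed into that machinery rather than reproduce it. The missing estimate is therefore a genuine, non-trivial gap, and the paper's Laplace-transform argument is not merely an alternative but a way of avoiding that difficulty altogether.
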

The proof is based on an analysis of the zero set of a GWI or a CBI process. 
The structure of the zero set of the CBI process $Z$, which is that of a random cutout set. 
Random cutouts were introduced in \cite{Mandelbrot1972} and further studied in \cite{MR799145} 
(see also the account on \cite[Ch. 7]{MR1746300}). 
They were connected to CBI processes 
(and in particular squared Bessel type ones of equation \ref{SDEofBesselType}) 
in \cite{MR3263091}. 
The random cutout structure is also found and easily understood in the setting of GWI processes as follows. 
Random cutouts (on $\na$) are constructed out of a sequence of iid random variables $(L_i,i\geq 0)$ 
with values in $\na$  
which are used to remove the integer intervals $\set{j: i\leq j<i+L_i}$  from $\na$ 
to get the uncovered set
\[
	\mc{U}=\na\setminus\bigcup_{i\geq 0}\set{j: i\leq j<i+L_i}. 
\]To related it to $\gwi$ processes, let $f$ and $g$  be the offspring  and immigration generating functions. 
Recall that the probability that a $\gw_1(f)$ process is extinct by time $n$ 
equals 
 $f^{\circ n}(0)$. 

Let $L_0$ be constructed as follows: 
let $K_0$ have generating function $g$ and, 
conditionally on $K_0=k$, 
let $L_0$ have the law of the extinction time of a GW process 
with offspring generating function $f$ which starts at $k$. 
Note that
\begin{equation}
\label{equation:DiscreteCutoutDistributionGWI}
\proba{L_0\leq n}=\imf{g\circ f^{\circ n}}{0}. 
\end{equation}We then let $(L_i)$ be iid with the same law as $L_0$. 
The interpretation is that $L_i$ stands for 
the number of generations spanned by  the descendants of immigrants arriving at generation $i$. 
Then, it follows easily that the random cutout set $\mc{U}$ based on the sequence $L$ 
has the same law as the zero set of a $\gwi(\mu,\nu)$ started at zero. 

\begin{proof}[Proof of Proposition \ref{PropositionOnWeakConvergenceOfExcursionEndpointsForGWI}]
An important computation for random cutouts is that of the renewal density
Note that\[
	\proba{n\in \mc{U}}
	=\prod_{m=0}^n \proba{L_1\leq n-m}
	=\prod_{m=0}^n g\circ f_{n-m}(0). 
\]In our case, we obtain the asymptotics
\[
	\proba{n\in \mc{U}}
	=e^{-\sum_{m=0}^n -\log(1-\frac{1-p}{1+pm})}
	\sim \gamma n^{-\delta}
\]Let $U(x)=\sum_{n\leq x} \proba{n\in \mc{U}}$. The above displays tells us that $U(x)\sim x^{1-\delta}$, 
so that its Laplace transform, $\hat U$, satisfies $\hat U(x)\sim x^{-(1-\delta)}$. 
If $R_1$ has the law of the first return time to zero of a $\gwi(\mu,\nu)$ started at $0$, formula \eqref{LaplaceTransformLeftEndpointOfExcursionDiscreteTime1} states that
\[
	\frac{1}{1-\esp{e^{-\lambda R_1}}}= \sum_{n}\proba{n\in \mc{U}} e^{-\lambda n},
\]so that $1-\esp{e^{-\lambda R_1}}\sim c\lambda^{1-\delta}$ as $\lambda\xrightarrow{} 0$. 
Let now $R$ be a random walk based on the law of $R_1$. 
It follows that $R$ is in the domain of attraction of a stable subordinator $\tau$ of index $1-\delta$: the process $R^n$ given by $R^n_t=R_{n^{1-\delta} t}/n$ converges weakly to $\tau$. 
If $T$ denotes a standard exponential random variable independent of $R$ 
and $\tau$, let $g^n_t\leq t<d^n_t$ (resp. $g_t\leq t<d_t$) 
be the points on the range of $R$ and $\tau$ closest to $t$ 
(note that $g^n_t=n^{-(1-\delta)}g^1_{n^{1-\delta} t}$), then, 
because of \eqref{leftEndpointSubLTEq} and  \eqref{LaplaceTransformLeftEndpointOfExcursionDiscreteTime1}, 
\begin{linenomath}
\begin{align*}
	\esp{e^{-\alpha g^n_{T/\theta}-\beta d^n_{T/\theta}}}
	&=\frac{\esp{e^{-n^{-(1-\delta)}\beta R}-e^{-n^{-(1-\delta)}(\beta+\theta)R}}}{\esp{1-e^{n^{-(1-\delta)}(\alpha+\beta+\theta)R}}}
	\\&\xrightarrow[n\to\infty]{}
	\frac{\paren{\beta+\theta}^{1-\delta}-\paren{\beta}^{1-\delta}}{\paren{\alpha+\beta+\theta}^{1-\delta}}
	=\esp{e^{-\alpha g_{T/\theta}-\beta d_{T/\theta}}}
\end{align*}\end{linenomath}

We now prove that, for every $t>0$, $d^n_t\to d_t$ and $g^n_t\to g_t$ weakly. 
Notice first that, for $s\leq t$, the excursion interval straddling $s$ ends after $t$ 
if and only if the excursion interval straddling $t$ begins before $s$. 
Therefore, from the above paragraph, we see that, 
\begin{linenomath}
\begin{align*}
	\int e^{-\theta s} \proba{d^n_t\in ds}
	&=\int \theta e^{-\theta s} \proba{s\leq d^n_t}\, ds
	=\int \theta e^{-\theta s}\proba{g^n_s\leq t}\, ds
	=\proba{g^n_{T/\theta}\leq t}
	\\&\to \proba{g_{T/\theta}\leq t}
	=\int \theta e^{-\theta s}\proba{g_s\leq t}\, ds
	=\int \theta e^{-\theta s} \proba{s\leq d_t}\, ds
	\\&=\int e^{-\theta s} \proba{d^n_t\in ds}
\end{align*}\end{linenomath}We conclude the weak convergence $d^n_t\to d_t$ and a similar argument gives us the weak convergence $g^n_t\to g_t$. 
\end{proof}

In particular, the above proposition proved weak convergence of the inverse local time of $Z^n$ 
with a precise scaling sequence rather than the abstract one of Theorem \ref{mainLimitTheorem}. 
That is what explains the more explicit scaling in Theorem \ref{TheoremOnConvergenceOfLocalTimesOfGWIProcesses}. 

\begin{proposition}
\label{propositionOnWeakConvergenceOfGWIHittingTimesofZero}
Let $\tilde Z^n$ be a $\gwi(\mu,\nu)$ that starts at $k_n$ and $Z^n_t=\tilde Z^n_{\floor{nt}}/n$. 
If $k_n/n\to z$ as $n\to\infty$ and $Z$ is the unique solution to \eqref{SDEofBesselType} started at $z$, 
then the hitting time of zero of $Z^n$ converges to that of $Z$. 
\end{proposition}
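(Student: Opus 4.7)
The plan is to exploit the branching decomposition of the $\gwi$ process. I would write $\tilde Z^n = \tilde U^n + \tilde W^n$, where $\tilde U^n$ is a $\gw(\mu)$ process started at $k_n$ (descendants of the initial individuals) and $\tilde W^n$ is an independent $\gwi(\mu,\nu)$ process started at $0$ (immigrants and their descendants); correspondingly, by the additivity property of $\cbi$ processes, the limit $Z$ started at $z$ decomposes in law as $Z = U + W$ with $U$ a $\cb(\Psi)$ started at $z$ and $W$ an independent $\cbi(\Psi,\Phi)$ started at $0$. Letting $T^n$ denote the extinction time of $\tilde U^n$, $T = T_0(U)$, and $W^n_t = \tilde W^n_{\floor{nt}}/n$, the hitting time of zero of $\tilde Z^n$ equals $H^n = \inf\set{m \geq T^n : \tilde W^n_m = 0}$, and at the limit $H_0(Z) = \inf\set{t \geq T : W_t = 0}$.

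By Proposition~\ref{propositionOnConvergenceOfExtinctionTimesForOneGWProcess}, $T^n/n$ converges weakly to $T$, whose law $\proba{T \leq t} = e^{-z/t}$ is absolutely continuous. Combining this with the weak convergence $W^n \to W$ from the $\cbi$ scaling discussion and the independence of the two branching components (preserved at the limit), I obtain the joint convergence $(W^n, T^n/n) \to (W, T)$ with $W$ independent of $T$; passing to a Skorohod representation I may assume this convergence is almost sure. Since $W$ has continuous paths (it solves~\eqref{SDEofBesselType}) and its zero set has Lebesgue measure zero almost surely, while $T$ is diffusely distributed and independent of $W$, Fubini ensures $W_T > 0$ almost surely, so $T$ lies strictly inside some excursion interval $(g^W_T, d^W_T)$ of $W$ and $H_0(Z) = d^W_T$.

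The main step is then to prove $d^{W^n}_{T^n/n} \to d^W_T$ almost surely, which I would do by sandwiching between deterministic times. Proposition~\ref{PropositionOnWeakConvergenceOfExcursionEndpointsForGWI} provides $d^{W^n}_s \to d^W_s$ weakly for each fixed $s$; its proof, via the convergence of the excursion-length renewal process to a stable subordinator on Skorohod space, actually yields joint convergence at any finite family of times, and by a diagonal argument I upgrade this to simultaneous almost-sure convergence along a countable dense set $S \subset (0,\infty)$. For rationals $q_1 \in S \cap (g^W_T, T)$ and $q_2 \in S \cap (T, d^W_T)$ we have $d^W_{q_1} = d^W_{q_2} = d^W_T$; eventually $T^n/n \in (q_1, q_2)$, so monotonicity of $s \mapsto d^{W^n}_s$ yields $d^{W^n}_{q_1} \leq d^{W^n}_{T^n/n} \leq d^{W^n}_{q_2}$, and letting $q_1 \uparrow T$, $q_2 \downarrow T$ through $S$ forces $d^{W^n}_{T^n/n} \to d^W_T$. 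Since $W^n_{T^n/n} \to W_T > 0$, eventually $H^n/n = d^{W^n}_{T^n/n}$, so $H^n/n \to H_0(Z)$ almost surely, whence in distribution. The main obstacle is precisely this sandwich, because $s \mapsto d^W_s$ jumps at each left-endpoint of an excursion of $W$ and $s \mapsto d^{W^n}_s$ is piecewise constant with jumps at the discrete zeros of $W^n$; the argument succeeds because the absolute continuity and independence of $T$ keep it almost surely away from the countable set of left-endpoints, while the continuity of $W$ upgrades the Skorohod convergence $W^n \to W$ to uniform convergence on compacts, letting information at deterministic rational times transfer to the random time $T^n/n$.
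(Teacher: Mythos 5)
Your overall architecture matches the paper's: the same additive branching decomposition $\tilde Z^n=\tilde U^n+\tilde W^n$ (with $\tilde U^n$ a $\gw(\mu)$ from $k_n$ and $\tilde W^n$ an independent $\gwi(\mu,\nu)$ from $0$), the same identity $H^n = d^{W^n}_{T^n/n}$, and the same inputs from Propositions~\ref{propositionOnConvergenceOfExtinctionTimesForOneGWProcess} and~\ref{PropositionOnWeakConvergenceOfExcursionEndpointsForGWI}. The difference lies in the last step: the paper stays at the level of marginal laws, invoking the independence of $\tilde T^n$ and $(d^n_t)_t$ together with the weak convergences of Propositions~\ref{propositionOnConvergenceOfExtinctionTimesForOneGWProcess} and~\ref{PropositionOnWeakConvergenceOfExcursionEndpointsForGWI} (the implicit ingredient being monotonicity of $t\mapsto d^n_t$ plus continuity in $t$ of the limit, which yields the required uniform convergence on compacts), whereas you try to upgrade everything to almost-sure convergence via a Skorohod representation and a sandwich.

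There is a genuine gap in that last step. You first Skorohod-represent $(W^n, T^n/n)\to(W,T)$ a.s.; you then need, in that same space, $d^{W^n}_q\to d^W_q$ a.s.\ for all $q$ in a countable dense set $S$. This does not follow from $W^n\to W$ a.s.\ in Skorohod space: the first-zero-after-$q$ map is not an a.s.\ continuous functional, and establishing that $d^{W^n}_q\to d^W_q$ in probability given Skorohod convergence of $W^n$ is precisely what the proof of Theorem~\ref{TheoremOnConvergenceOfLocalTimesOfGWIProcesses} labours to do --- and that proof itself calls on Proposition~\ref{propositionOnWeakConvergenceOfGWIHittingTimesofZero} in its ``third summand''. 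Your ``diagonal argument'' produces a.s.\ convergence of the $d^{W^n}_q$ only in \emph{some} Skorohod coupling of $((d^{W^n}_q)_{q\in S}, T^n/n)$; it is not the coupling in which $W^n\to W$ a.s., so you cannot use both at once, and your closing sentence (``the continuity of $W$ upgrades the Skorohod convergence\ldots letting information at deterministic rational times transfer'') conflates the two. To repair the argument without circularity, drop $W^n$ entirely and instead Skorohod-represent $(R^n, T^n/n)\to(\tau,T)$ a.s., where $R^n$ is the discrete inverse local time (renewal process of zeros of $\tilde W^n$) and $\tau$ the limiting stable subordinator of index $1-\delta$ from the proof of Proposition~\ref{PropositionOnWeakConvergenceOfExcursionEndpointsForGWI}. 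The map $R\mapsto d^R_q$ (smallest range point $>q$) \emph{is} a.s.\ continuous at $\tau$ for each fixed $q>0$, because a driftless subordinator a.s.\ avoids the level $q$; that gives the a.s.\ convergence along $S$ that your sandwich then exploits, with no appeal to the hard part of Theorem~\ref{TheoremOnConvergenceOfLocalTimesOfGWIProcesses}.
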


\begin{proof}
By the branching property for $\gwi$ processes, 
we write $\tilde Z^n=U+\tilde V^n$, where $U$ is a $\gwi(\mu,\nu)$ started at zero 
and $\tilde U^n$ is an independent $\gw(\mu)$ that starts at $k_n$ 
as in Proposition \ref{propositionOnConvergenceOfExtinctionTimesForOneGWProcess}. 
We also let $Z^n_t=\tilde Z^n_{\floor{nt}}/n$ and define $U^n$ and $V^n$ by applying the latter scaling 
to $U$ and $\tilde V^n$. Note that we can also write $Z^n$ 
as the sum $U^n+V^n$ of independent processes. 
We can also perform the above decomposition for $Z=U+V$. 

Let $\tilde T^n$ be the extinction time of $V^n$, $T^n$ be the hitting time of zero for $Z^n$ and $d^n_t$ 
be the first zero of $U^n$ after $t$. 
(Analogously define $\tilde T$, $d_t$ and $T$.) 
It follows that
\[
	T^n=d^n_{\tilde T^n}
	\quad\text{and}\quad
	T=d_{\tilde T}. 
\]Since $\tilde T^n$ is independent of $(d^n_t)$, 
Proposition \ref{PropositionOnWeakConvergenceOfExcursionEndpointsForGWI} 
implies the weak convergence
\[
	T^n=d^n_{\tilde T^n}\to d_{\tilde T}=T. \qedhere
\]
\end{proof}

Finally, we pass to the
\begin{proof}[Proof of Theorem \ref{TheoremOnConvergenceOfLocalTimesOfGWIProcesses}]
The strategy is the same as for the proof of Theorem \ref{randomWalkLocalTimeLimitTheorem}, 
based on inequality 
\[
	\proba{\abs{d_t-d^n_t}>\delta}
	\leq \proba{d_t-d_{\eps,t}>\delta/3}
		+\proba{\abs{d_{\eps,t}-d^n_{\eps,t}}>\delta/3}
		+\proba{d^n_t-d^n_{\eps,t}>\delta/3}
\]
(later on considered for $g^n$ and $g$.) 
For the three summands in the right-hand side, 
we prove that taking limits as $n\to\infty$ and then $\eps\to 0$ gives $0$. 
It is only in the second summand where being on the same probability space is important.  

As before, assume that the convergence of $Z^n$ to $Z$ takes place almost surely on a given probability space 
and define (besides $g_t,g^n_t,d_t$ and $d^n_t$) 
$d_{\eps,t}$ and $g_{\eps,t}$ 
(resp. $d^n_{\eps,t}$ and $g^n_{\eps,t}$) 
to be the first time after $t$ and last time before $t$ that $Z$ (resp. $Z^n$) belongs to $[0,\eps)$. 

\begin{description}
	\item[First summand] Since $d_{\eps,t}$ increases as $\eps\downarrow 0$, let $\tilde d_t$ be its limit. 
	The process $Z$ is continuous, so that $Z_{\tilde d_t}=0$ and so $\tilde d_t=d_t$. In particular, 
	\[
		\lim_{\eps\to 0}\lim_{n\to\infty}\proba{d_t-d_{\eps,t}>\delta/3}=0. 
	\]A similar argument gives us
	\[
		\lim_{\eps\to 0}\lim_{n\to\infty}\proba{g_{\eps,t}-g_{t}>\delta/3}=0. 
	\]
	\item[Second summand] We will apply Proposition \ref{SkorohodSpaceHittingTimeOfOpenSetLemma}. 
	To this end, since $Z$ is continuous, 
	it only remains to prove that the hitting time of $\set{-\eps,\eps}$ after $t$, 
	denoted $\tilde d_{t,\eps}$, actually equals $d_{t,\eps}$ when starting at $z\geq \eps$. 
	But this follows from the strong Markov property applied at time $d_{t,\eps}$, 
	and the fact that, when started at $\eps$, $Z$ immediately visits $[0,\eps)$ by regularity. 
	(This fact is contained in the proof of Theorem \ref{generalMarkovianTheorem}.) 
	We obtain, for any $\eps>0$, 
	\[
		\lim_{n\to\infty}\proba{ \abs{d_{\eps,t}-d^n_{\eps,t}} >\delta/3}=0. 
	\]For the left endpoints, we use the fact that $Z$ admits weakly continuous bridges which are invariant under time-reversal. 
	Existence and weak continuity was stated for the squared Bessel process in \cite{MR656509} or \cite[Example 2.2, p.619]{MR2789508} and holds for $Z$, while time-reversibility was analyzed in Section 5 of the first reference. 
	Then we can use the same argument as in the proof of Theorem \ref{randomWalkLocalTimeLimitTheorem}. 
	We conclude that
	\[
		\lim_{n\to\infty}\proba{ \abs{g_{\eps,t}-g^n_{\eps,t}} >\delta/3}=0. 
	\]
	\item[Third summand] We first use Lemma \ref{SkorohodSpaceHittingTimeOfOpenSetLemma} 
	to deduce that $X^n_{d^n_{\eps,t}}\to X_{d_{\eps,t}}=\eps$. 
	The hypotheses were verified in the last item. 
	Momentarily denote by $\p_x$ either the law of $X$ or $X^n$ (based on the context) 
	when started at $x$. 
	Proposition \ref{propositionOnWeakConvergenceOfGWIHittingTimesofZero} 
	then tells us that
	\[
		\imf{\p_{X^n_{d^n_{\eps,t}}}}{T^n_0>\delta/3}\to \imf{\p_{X_{d_{\eps,t}}}}{T_0>\delta/3}, 
	\]so that
	\begin{linenomath}
	\begin{align*}
		&\lim_{\eps\to 0}\lim_{n\to\infty}\proba{d^n_t-d^n_{\eps,t}>\delta/3}
		=\lim_{\eps\to 0}\lim_{n\to\infty}\esp{\imf{\p_{X^n_{d^n_{\eps,t}}}}{T_0>\delta/3}}
		\\&=\lim_{\eps\to 0}\esp{\imf{\p_{X_{d_{\eps,t}}}}{T_0>\delta/3}}
		=\lim_{\eps\to 0} \proba{d_t-d_{\eps,t}>\delta/3}\\&=0
	\end{align*}\end{linenomath}where the last equality follows from our first item. 
\end{description}
We deduce that Theorem \ref{mainLimitTheorem} is applicable and gives us the conclusions of Theorem \ref{TheoremOnConvergenceOfLocalTimesOfGWIProcesses}. 
\end{proof}

\section{Concluding remarks}
\label{SectionOnConcludingRemarks}

In this paper, we presented an invariance principle of general applicability
for the counting process of successive visits to a given state for a discrete time regenerative process. 
The limit is expressed in terms of regenerative local times 
and the hypotheses include convergence of left and right endpoints of excursions, 
which are related to first and last visits to the regenerative state. 
We showed how pathwise assumptions on the process led to simple verification of the hypotheses 
in the invariance principle. 
We also showed examples where more involved distributional methods had to be applied and provided a blueprint for how to do it. 
However, the applications were then less general that what one might suspect 
and would be more general if the following classical problems could be addressed. 

Our first problem is related to Theorem \ref{randomWalkLocalTimeLimitTheorem}. 
\begin{openproblem}
Let $X^n$ be a sequence of random walks such that $X^n_{n\cdot}/a_n$ converges weakly to a L\'evy process $X$. 
Find conditions such that the hitting times of $X^n$ started at $x$ converge to those of $X$ for any $x\in \re$. 
In our setting, where $X^1=X^n$ for all $n$, 
we solved the problem via a local limit theorem for the transition probabilites, 
which implied a local limit theorem for resolvents. 
Therefore, it is relevant to find conditions for a local limit theorem to hold. 
\end{openproblem}

Our second problem is related to Theorem \ref{TheoremOnConvergenceOfLocalTimesOfGWIProcesses}. 
\begin{openproblem}
Let $Z^n$ be a sequence of Galton-Watson processes with immigration started at $z_n$, 
where $z_n/n\to z\geq 0$ and such that $Z^n_{a_n\cdot}/n$ converges weakly to a $\cbi$ $Z$. 
Find conditions so that right and left endpoint of excursions converge. 
In our case, $Z^n=Z^1$, which had finite variance, and the scaling limit is necessarily a squared Bessel process. 
To establish convergence of endpoints of excursions, 
we passed through the convergence of hitting times of zero by Galton-Watson processes 
with the same reproduction but no immigration. 
Therefore, our first problem in this setting could be to establish 
the convergence of extinction times of sequences of GW processes. 
This can in fact be done when $Z^n=Z^1$ by means of either Kolmogorov's classical estimate 
(cf. \cite[Ch. I.10.2]{ha1}) or generalizations when $Z^1$ has power law tails using the results of  \cite{MR0228077}. 
But for general sequence, it does not seem to be immediate. 
Recall that scaling limits of GWI processes are CBI processes. 
To carry out the program of Theorem \ref{TheoremOnConvergenceOfLocalTimesOfGWIProcesses}, 
we would need the CBI process to admit a density, to build bridge laws, and to study their reversibility properties. 
Sufficient conditions for the existence of densities can be found in \cite{MR3732548}. 
We would also need local limit theorems for the approximating GWI processes. 
\end{openproblem}

Our third problem is connected to the fact that all of our examples feature one-dimensional processes. 
Indeed, the multidimensional analogues of our examples 
either have $0$ as a polar point 
or the analysis of the recurrence of zero has not been made. 
However, the class of processes introduced in \cite{MR3539299} can be recurrent in any dimension. 
These are obtained as scaling limits of inhomogeneous random walks 
with a limiting covariance structure 
and are such that their radial process is not Markovian but regenerative at zero. 
We would of course like to apply our invariance principle in this context. 

\section*{Appendix} 
The following Skorohod space lemma was used in the proof of Theorem \ref{randomWalkLocalTimeLimitTheorem}. 
We were unable to locate a reference for it. 
Let $(E,d)$ denote any metric space 
and let $D$ stand for the Skorohod space of \cadlag\ functions from $[0,\infty)$ into $E$. 
We also let $d$ stand for a metric generating the Skorohod ($J_1$) topology, 
possible confusions will be ruled out by context. 
For any $A\subset E$, define the hitting time of $A$, defined for $f\in D$, as
\[
\imf{T_A}{f}=\inf\set{t\geq 0: \imf{f}{t}\in A},
\]with the convention that the infimum of the empty set equals $\infty$. 
We will need the hitting time left-limit hitting time of $A$, by
\[
\imf{T^-_A}{f}=\inf\set{t\geq 0: \imf{f}{t-}\in A}. 
\]
\begin{lemma}
\label{SkorohodSpaceHittingTimeOfOpenSetLemma}
Let $O\subset E$ be open and let $C=\clo{O}$. 
If $f\in D$ is such that\[
\imf{T_O}{f}\leq \imf{T_C}{f}, \imf{T^-_{C}}{f}\] then 
$g\mapsto \imf{T_O}{g}$ and $g\mapsto \imf{g}{\imf{T_O}{g}}$ are continuous at $f$. 
\end{lemma}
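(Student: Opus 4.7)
The plan is as follows. Write $t_0:=\imf{T_O}{f}$. Since $O\subset C$, one has $\imf{T_C}{f}\le\imf{T_O}{f}$ automatically; and by right-continuity of $f$ together with openness of $O$, one also has $\imf{T^-_C}{f}\le\imf{T_O}{f}$, since $\imf{f}{s}\in O$ implies $\imf{f}{r}\in O$ on some interval $[s,s+\delta)$ and hence $\imf{f}{t-}\in\clo{O}=C$ for $t\in(s,s+\delta]$. Thus the hypothesis collapses to the three equalities $t_0=\imf{T_O}{f}=\imf{T_C}{f}=\imf{T^-_C}{f}$: neither $f$ nor its left limits visit $C$ strictly before time $t_0$. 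Fix a sequence $g_n\to f$ in $D$ together with time-change homeomorphisms $\lambda_n$ of $[0,\infty)$ realising the Skorohod convergence, so that $\lambda_n\to\id$ uniformly and $g_n\circ\lambda_n\to f$ uniformly on compacts.

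For continuity of $T_O$ at $f$, I would first establish $\limsup_n \imf{T_O}{g_n}\le t_0$ by choosing, for each small $\eta>0$, a continuity point $r\in(t_0,t_0+\eta)$ of $f$ with $\imf{f}{r}\in O$ (such $r$ exists by the definition of $\imf{T_O}{f}$ combined with right-continuity of $f$ and openness of $O$); uniform convergence forces $\imf{g_n}{\imf{\lambda_n}{r}}\in O$ for large $n$, hence $\imf{T_O}{g_n}\le\imf{\lambda_n}{r}\to r$. For the matching bound $\liminf_n \imf{T_O}{g_n}\ge t_0$, suppose along a subsequence $\imf{T_O}{g_n}\to t_1<t_0$, choose $s_n\in[\imf{T_O}{g_n},\imf{T_O}{g_n}+1/n)$ with $\imf{g_n}{s_n}\in O$, and set $u_n=\imi{s_n}{\lambda_n}\to t_1$. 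Passing to a sub-subsequence on which $u_n\to t_1$ monotonically, uniform convergence yields $\imf{g_n}{s_n}\to\imf{f}{t_1}$ (if $u_n\downarrow t_1$) or $\imf{g_n}{s_n}\to\imf{f}{t_1-}$ (if $u_n\uparrow t_1$); since $\imf{g_n}{s_n}\in O\subset C$ and $C$ is closed, the limit lies in $C$, contradicting the equality $\imf{T_C}{f}=\imf{T^-_C}{f}=t_0>t_1$.

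For continuity of the evaluation map, set $t_n:=\imf{T_O}{g_n}\to t_0$ and $v_n:=\imi{t_n}{\lambda_n}\to t_0$. Then $\imf{g_n}{t_n}=\imf{g_n\circ\lambda_n}{v_n}$ and $d(\imf{g_n}{t_n},\imf{f}{v_n})\to 0$ by uniform convergence, so any subsequential limit of $\imf{g_n}{t_n}$ equals $\imf{f}{t_0}$ or $\imf{f}{t_0-}$ according to the side of approach of $v_n$ to $t_0$, and it lies in $C$ because $\imf{g_n}{t_n}\in C$ (either directly in $O$, or in $\clo{O}=C$ as the right-continuous value of $g_n$ at an infimum approached by values in $O$). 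When $f$ is continuous at $t_0$ these two candidates coincide and $\imf{g_n}{t_n}\to\imf{f}{t_0}$ follows at once; in the jump case, the hypothesis $\imf{T^-_C}{f}=t_0$ is leveraged together with the freedom to choose $\lambda_n$ aligning the jump of $g_n$ approximating the jump of $f$ at $t_0$, forcing $v_n\ge t_0$ eventually and ruling out the spurious subsequential limit $\imf{f}{t_0-}$.

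The main obstacle is precisely this last step: when $f$ is discontinuous at $t_0$, a soft argument based only on the Skorohod metric permits $\imf{g_n}{t_n}$ to cluster at $\imf{f}{t_0-}\ne\imf{f}{t_0}$, and ruling this out requires the full strength of the hypothesis, in combination with a careful choice of the parametrisations $\lambda_n$ that aligns the jumps of $g_n$ and $f$ near $t_0$ so that the approach $v_n\to t_0$ happens from the right.
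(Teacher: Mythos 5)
Your argument for the continuity of $T_O$ is correct and gives a genuine alternative to the paper's route. Where the paper derives, from compactness of $[0,t-\delta]$ and closedness of $C$, a quantitative bound $\eta>0$ with $d(f(s),C)>\eta$ and $d(f(s-),C)>\eta$ for all $s\le t-\delta$ and then transfers it to $f_n$ through the time-changes, you pass to a convergent subsequence of hitting times and obtain a contradiction by showing that every subsequential limit of $g_n(s_n)$ lies in the closed set $C$, against $T_C(f)=T^-_C(f)=t_0$. Both are fine. Your preliminary observation that $O\subset C$ together with right-continuity of $f$ already force $T_C(f)\le T_O(f)$ and $T^-_C(f)\le T_O(f)$, so that the hypothesis collapses to equalities, is a clarification the paper does not spell out.

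The obstacle you flag in the jump case of the evaluation map is genuine, and in fact it cannot be removed from the stated hypotheses alone. Take $E=\re$, $O=(1,2)$, $C=[1,2]$, and the \cadlag\ path $f$ with $f(s)=s$ on $[0,1)$ and $f(s)=3/2$ on $[1,\infty)$; then $T_O(f)=T_C(f)=T^-_C(f)=1$, so the hypothesis holds, yet $f(1-)=1\in C$. Let $g_n$ equal $f$ with the jump postponed to $1+1/n$, so $g_n(s)=s$ for $s<1+1/n$ and $g_n(s)=3/2$ otherwise. Then $g_n\to f$ in $D$, $T_O(g_n)=1$ is an unattained infimum, and $g_n(T_O(g_n))=g_n(1)=1\not\to 3/2=f(T_O(f))$, so the conclusion fails. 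The paper's own proof of the jump case asserts that the hypothesis ``$t\le T^-_C(f)$'' yields $f(t-)\notin C$; but since the hypothesis in fact forces $T^-_C(f)=t$, this is not a valid inference (the equality is perfectly compatible with $f(t-)\in C$, as in the example), and the subsequent claim $T_O(f_n)=\lambda_n^{-1}(t)$ for large $n$ also fails there. The lemma becomes correct if one adds the hypothesis $f(T_O(f)-)\notin C$, which is exactly what your aligned-jump plan would require; in every place the paper invokes the lemma this extra condition is automatic because the limiting processes are shown to be continuous at the relevant hitting time, so the error does not propagate to the applications.
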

\begin{proof}
Let $\paren{f_n}$ be a sequence of functions on Skorohod space converging to $f$ 
and let $t=\imf{T_O}{f}$ and $t_n=T_O(f_n)$. 
Then, for any $\delta>0$, there exists a continuity point  of $f$, say $t'$, belonging to $[t,t+\delta]$ 
and such that $\imf{f}{t'}\in O$. 
But then, $\imf{f_n}{t'}\to \imf{f}{t'}$, so that $\imf{f_n}{t'}\in O$ for large enough $n$. 
We deduce that $\limsup_{n} \imf{T_O}{f_n}\leq \imf{T_O}{f}$.

If $\imf{T_O}{f}=0$, then $T_O(f_n)\to T_O(f)$ and $f(T_O(f_n))\to f(T_O(f))$. 
Otherwise, note that our hypothesis says that 
for any $\delta\in (0,t)$, 
and for any $s\in [0,t-\delta]$, 
$\imf{f}{s}\not\in C$ and $\imf{f}{s-}\not\in C$. 
Hence, there exists $\eta>0$ such that $d(f(s),C), d(f(s-),C)>\eta$ for all $s\leq t-\delta$.  
(Assuming the contrary leads to $T_C(f)\leq t-\delta$ or $T^-_C(f)\leq t-\delta$. )
But then there exists $N$ such that  $\imf{d}{\imf{f_n}{s}, C}> \eta$ 
and $\imf{d}{\imf{f_n}{s-}, C}>\eta$ for any $s\in [0,t-\delta]$ and $n\geq N$. 
Indeed, this follows by letting $T>t$ 
and considering a sequence $(\lambda_n)$ of increasing homeomorphisms of $[0,T]$ into itself 
such that $f_n-f\circ\lambda_n\to 0$ uniformly on $[0,T]$. 
But then $t_n\geq t-\delta$ for $n\geq N$ and 
we deduce that $\liminf_n t_n\geq t-\delta$ for all $\delta>0$. 
By the preceding paragraph, we conclude that $t_n\to t$. 



If $f$ is continuous at $t$ then $\imf{f_n}{t_n}\to \imf{f}{t}$. 
Assume otherwise that $f$ is discontinuous at $t$. 
By hypothesis, $t\leq T^-_C(f)$, so that $\imf{f}{t-}\not \in C$. 
Then, as in our second paragraph, $f_n$ is bounded away from $C$ on $[0,\lambda_n^{-1}(t))$ 
and takes values in $O$ in any right neighborhood of $\lambda_n^{-1}(t)$ for large enough $n$. 
We then see that $t_n=\lambda_n^{-1}(t)$ for large enough $n$. 
But then
\[
f_n(T_O(f_n))=f_n(t_n)=f_n(\lambda_n^{-1}(t))\to f(t)=f(T_O(f)). \qedhere
\]
\end{proof}

\bibliography{GenBib}

\providecommand{\bysame}{\leavevmode\hbox to3em{\hrulefill}\thinspace}
\providecommand{\MR}{\relax\ifhmode\unskip\space\fi MR }
\providecommand{\MRhref}[2]{%
  \href{http://www.ams.org/mathscinet-getitem?mr=#1}{#2}
}
\providecommand{\href}[2]{#2}
\begin{thebibliography}{GMMW16}

\bibitem[AHUB19]{2019arXiv190304745A}
Osvaldo Angtuncio~Hern\'andez and Ger\'onimo Uribe~Bravo, \emph{{Dini
  derivatives for Exchangeable Increment processes and applications}}, arXiv
  e-prints (2019), arXiv:1903.04745.

\bibitem[APRUB11]{MR2831081}
Josh Abramson, Jim Pitman, Nathan Ross, and Ger{\'o}nimo Uribe~Bravo,
  \emph{Convex minorants of random walks and {L}\'evy processes}, Electron.
  Commun. Probab. \textbf{16} (2011), 423--434. \MR{2831081}

\bibitem[Ber96]{MR1406564}
Jean Bertoin, \emph{L{\'e}vy processes}, Cambridge Tracts in Mathematics, vol.
  121, Cambridge University Press, Cambridge, 1996. \MR{1406564}

\bibitem[Ber99]{MR1746300}
\bysame, \emph{Subordinators: examples and applications}, Lecture Notes in
  Math., vol. 1717, pp.~1--91, Springer, Berlin, 1999. \MR{1746300}

\bibitem[BGT87]{MR898871}
N.~H. Bingham, C.~M. Goldie, and J.~L. Teugels, \emph{Regular variation},
  Encyclopedia of Mathematics and its Applications, vol.~27, Cambridge
  University Press, Cambridge, 1987. \MR{898871}

\bibitem[Bil99]{MR1700749}
Patrick Billingsley, \emph{Convergence of probability measures}, second ed.,
  Wiley Series in Probability and Statistics: Probability and Statistics, John
  Wiley \& Sons Inc., New York, 1999. \MR{1700749}

\bibitem[Bil12]{MR2893652}
\bysame, \emph{Probability and measure}, Wiley Series in Probability and
  Statistics, John Wiley \& Sons, Inc., Hoboken, NJ, 2012. \MR{2893652}

\bibitem[BK92]{MR1169015}
Richard~F. Bass and Davar Khoshnevisan, \emph{Local times on curves and uniform
  invariance principles}, Probab. Theory Related Fields \textbf{92} (1992),
  no.~4, 465--492. \MR{1169015}

\bibitem[Bor81]{MR636771}
A.~N. Borodin, \emph{The asymptotic behavior of local times of recurrent random
  walks with finite variance}, Teor. Veroyatnost. i Primenen. \textbf{26}
  (1981), no.~4, 769--783. \MR{636771}

\bibitem[Bor84]{MR749918}
\bysame, \emph{Asymptotic behavior of local times of recurrent random walks
  with infinite variance}, Teor. Veroyatnost. i Primenen. \textbf{29} (1984),
  no.~2, 312--326. \MR{749918}

\bibitem[CD10]{MR2663630}
L.~Chaumont and R.~A. Doney, \emph{Invariance principles for local times at the
  maximum of random walks and {L}\'evy processes}, Ann. Probab. \textbf{38}
  (2010), no.~4, 1368--1389. \MR{2663630}

\bibitem[CLP18]{MR3732548}
M.~Chazal, R.~Loeffen, and P.~Patie, \emph{Smoothness of continuous state
  branching with immigration semigroups}, J. Math. Anal. Appl. \textbf{459}
  (2018), no.~2, 619--660. \MR{3732548}

\bibitem[CPGUB13]{MR3098685}
Ma.~Emilia Caballero, Jos{\'e}~Luis P{\'e}rez~Garmendia, and Ger{\'o}nimo
  Uribe~Bravo, \emph{A {L}amperti-type representation of continuous-state
  branching processes with immigration}, Ann. Probab. \textbf{41} (2013),
  no.~3A, 1585--1627. \MR{3098685}

\bibitem[CUB11]{MR2789508}
Lo{\"{\i}}c Chaumont and Ger{\'o}nimo Uribe~Bravo, \emph{Markovian bridges:
  weak continuity and pathwise constructions}, Ann. Probab. \textbf{39} (2011),
  no.~2, 609--647. \MR{2789508}

\bibitem[DK57]{MR0084222}
D.~A. Darling and M.~Kac, \emph{On occupation times for {M}arkoff processes},
  Trans. Amer. Math. Soc. \textbf{84} (1957), 444--458. \MR{0084222}

\bibitem[DLG02]{MR1954248}
Thomas Duquesne and Jean-Fran{\c{c}}ois Le~Gall, \emph{Random trees, {L}{\'e}vy
  processes and spatial branching processes}, Ast{\'e}risque (2002), no.~281,
  vi+147. \MR{1954248}

\bibitem[FFS85]{MR799145}
P.~J. Fitzsimmons, Bert Fristedt, and L.~A. Shepp, \emph{The set of real
  numbers left uncovered by random covering intervals}, Z. Wahrsch. Verw.
  Gebiete \textbf{70} (1985), no.~2, 175--189. \MR{799145}

\bibitem[FUB14]{MR3263091}
Cl{\'e}ment Foucart and Ger{\'o}nimo Uribe~Bravo, \emph{Local extinction in
  continuous-state branching processes with immigration}, Bernoulli \textbf{20}
  (2014), no.~4, 1819--1844. \MR{3263091}

\bibitem[Get76]{MR0397897}
R.~K. Getoor, \emph{Another limit theorem for local time}, Z.
  Wahrscheinlichkeitstheorie und Verw. Gebiete \textbf{34} (1976), no.~1,
  1--10. \MR{0397897}

\bibitem[GK54]{MR0062975}
B.~V. Gnedenko and A.~N. Kolmogorov, \emph{Limit distributions for sums of
  independent random variables}, Addison-Wesley Publishing Company, Inc.,
  Cambridge, Mass., 1954. \MR{0062975}

\bibitem[GMMW16]{MR3539299}
Nicholas Georgiou, Mikhail~V. Menshikov, Aleksandar Mijatovi\'{c}, and
  Andrew~R. Wade, \emph{Anomalous recurrence properties of many-dimensional
  zero-drift random walks}, Adv. in Appl. Probab. \textbf{48} (2016), no.~A,
  99--118. \MR{3539299}

\bibitem[GMW18]{2018arXiv180107882G}
Nicholas {Georgiou}, Aleksandar {Mijatovi{\'c}}, and Andrew~R. {Wade},
  \emph{{Invariance principle for non-homogeneous random walks}}, arXiv
  e-prints (2018), arXiv:1801.07882.

\bibitem[Gne49]{MR0048731}
B.~V. Gnedenko, \emph{On a local theorem for stable limit distributions},
  Ukrain. Mat. \v Zurnal \textbf{1} (1949), no.~4, 3--15. \MR{0048731}

\bibitem[GP80]{MR579823}
Priscilla Greenwood and Jim Pitman, \emph{Construction of local time and
  {P}oisson point processes from nested arrays}, J. London Math. Soc. (2)
  \textbf{22} (1980), no.~1, 182--192. \MR{579823}

\bibitem[Gre74]{MR0408016}
D.~R. Grey, \emph{Asymptotic behaviour of continuous time, continuous
  state-space branching processes}, J. Appl. Probability \textbf{11} (1974),
  669--677. \MR{0408016}

\bibitem[GS18]{MR3813993}
Vadim Gorin and Mykhaylo Shkolnikov, \emph{Stochastic {A}iry semigroup through
  tridiagonal matrices}, Ann. Probab. \textbf{46} (2018), no.~4, 2287--2344.
  \MR{3813993}

\bibitem[Har02]{ha1}
Theodore~E. Harris, \emph{The theory of branching processes}, Dover
  Publications,, 2002, Corrected reprint of the 1963 original [Springer,
  Berlin; MR0163361]. \MR{1991122}

\bibitem[HL03]{MR1949295}
R.~H\"{o}pfner and E.~L\"{o}cherbach, \emph{Limit theorems for null recurrent
  {M}arkov processes}, Mem. Amer. Math. Soc. \textbf{161} (2003), no.~768,
  vi+92. \MR{1949295}

\bibitem[Kal92]{MR1158024}
Olav Kallenberg, \emph{Some time change representations of stable integrals,
  via predictable transformations of local martingales}, Stochastic Process.
  Appl. \textbf{40} (1992), no.~2, 199--223. \MR{1158024}

\bibitem[Kal02]{MR1876169}
\bysame, \emph{Foundations of modern probability}, 2nd ed., Springer-Verlag,
  2002. \MR{1876169}

\bibitem[KMS19]{2019arXiv190700963K}
Yuri {Kondratiev}, Yuliya {Mishura}, and Georgiy {Shevchenko}, \emph{{Limit
  theorems for additive functionals of continuous time random walks}}, arXiv
  e-prints (2019), arXiv:1907.00963.

\bibitem[KW71]{MR0290475}
Kiyoshi Kawazu and Shinzo Watanabe, \emph{Branching processes with immigration
  and related limit theorems}, Teor. Verojatnost. i Primenen. \textbf{16}
  (1971), 34--51. \MR{0290475}

\bibitem[Lam62]{MR0138128}
John Lamperti, \emph{Semi-stable stochastic processes}, Trans. Amer. Math. Soc.
  \textbf{104} (1962), 62--78. \MR{0138128}

\bibitem[L{\'e}v39]{MR0000919}
Paul L{\'e}vy, \emph{Sur certains processus stochastiques homog\`enes},
  Compositio Math. \textbf{7} (1939), 283--339. \MR{0000919}

\bibitem[L{\'e}v65]{MR0190953}
\bysame, \emph{Processus stochastiques et mouvement brownien}, Suivi d'une note
  de M. Lo\`eve. Deuxi\`eme \'edition revue et augment\'ee, Gauthier-Villars \&
  Cie, Paris, 1965. \MR{0190953}

\bibitem[LS14]{MR3189081}
Amaury Lambert and Florian Simatos, \emph{The weak convergence of regenerative
  processes using some excursion path decompositions}, Ann. Inst. Henri
  Poincar\'e Probab. Stat. \textbf{50} (2014), no.~2, 492--511. \MR{3189081}

\bibitem[LS15]{MR3320960}
\bysame, \emph{Asymptotic behavior of local times of compound {P}oisson
  processes with drift in the infinite variance case}, J. Theoret. Probab.
  \textbf{28} (2015), no.~1, 41--91. \MR{3320960}

\bibitem[Man72]{Mandelbrot1972}
Benoit~B. Mandelbrot, \emph{Renewal sets and random cutouts}, Z.
  Wahrscheinlichkeitstheorie und Verw. Gebiete \textbf{22} (1972), 145--157.
  \MR{0309162}

\bibitem[Mil77]{MR0433606}
P.~W. Millar, \emph{Zero-one laws and the minimum of a {M}arkov process},
  Trans. Amer. Math. Soc. \textbf{226} (1977), 365--391. \MR{0433606}

\bibitem[MO69]{MR0247668}
S.~A. Mol{\v c}anov and E.~Ostrovski{\u\i}, \emph{Symmetric stable processes as
  traces of degenerate diffusion processes}, Teor. Verojatnost. i Primenen.
  \textbf{14} (1969), 127--130. \MR{0247668}

\bibitem[MV18]{2018arXiv180805010M}
Aleksandar {Mijatovi{\'c}} and Vladislav {Vysotsky}, \emph{{Stationarity of
  entrance Markov chains and level-crossings of random walks}}, arXiv e-prints
  (2018), arXiv:1808.05010.

\bibitem[Per82]{MR665738}
Edwin Perkins, \emph{Weak invariance principles for local time}, Z. Wahrsch.
  Verw. Gebiete \textbf{60} (1982), no.~4, 437--451. \MR{665738}

\bibitem[Pro04]{MR2020294}
Philip~E. Protter, \emph{Stochastic integration and differential equations},
  2nd ed., Springer-Verlag, 2004. \MR{2020294}

\bibitem[PY82]{MR656509}
Jim Pitman and Marc Yor, \emph{A decomposition of {B}essel bridges}, Z.
  Wahrsch. Verw. Gebiete \textbf{59} (1982), no.~4, 425--457. \MR{656509}

\bibitem[R{\'e}v13]{MR3060348}
P{\'a}l R{\'e}v{\'e}sz, \emph{Random walk in random and non-random
  environments}, third ed., World Scientific Publishing Co. Pte. Ltd.,
  Hackensack, NJ, 2013. \MR{3060348}

\bibitem[Rog68]{MR0242261}
B.~A. Rogozin, \emph{The local behavior of processes with independent
  increments}, Teor. Verojatnost. i Primenen. \textbf{13} (1968), 507--512.
  \MR{0242261}

\bibitem[RY99]{MR1725357}
Daniel Revuz and Marc Yor, \emph{Continuous martingales and {B}rownian motion},
  3rd ed., Grundlehren der Mathematischen Wissenschaften, vol. 293,
  Springer-Verlag, Berlin, 1999. \MR{1725357}

\bibitem[Sha69]{MR0240850}
Michael Sharpe, \emph{Zeroes of infinitely divisible densities}, Ann. Math.
  Statist. \textbf{40} (1969), 1503--1505. \MR{0240850}

\bibitem[Sla68]{MR0228077}
R.~S. Slack, \emph{A branching process with mean one and possibly infinite
  variance}, Z. Wahrscheinlichkeitstheorie und Verw. Gebiete \textbf{9} (1968),
  139--145. \MR{0228077}

\bibitem[Whi02]{MR1876437}
Ward Whitt, \emph{Stochastic-process limits}, Springer Series in Operations
  Research, Springer-Verlag, New York, 2002. \MR{1876437}

\end{thebibliography}
\bibliographystyle{amsalpha}
\end{document}